\newcommand{\vct}{\vectorsym}
\newcommand{\mtx}{\matrixsym}
\newcommand{\er}{\vectorsym{e}_r}
\newcommand{\et}{\vectorsym{e}_{\theta}}
\newcommand{\ez}{\vectorsym{e}_z}
\newcommand{\bn}{\vectorsym{n}}
\newcommand{\bt}{\vectorsym{t}}
\newcommand{\bx}{\vectorsym{x}}
\newcommand{\by}{\vectorsym{y}}
\newcommand{\be}{\vectorsym{E}^{\text{tot}}}
\newcommand{\beinc}{\vectorsym{E}^{\text{inc}}}
\newcommand{\besc}{\vectorsym{E}}
\newcommand{\besce}{\vectorsym{E}_0}
\newcommand{\besci}{\vectorsym{E}_1}
\newcommand{\bh}{\vectorsym{H}^{\text{tot}}}
\newcommand{\bhinc}{\vectorsym{H}^{\text{inc}}}
\newcommand{\bhsc}{\vectorsym{H}}
\newcommand{\bhsce}{\vectorsym{H}_0}
\newcommand{\bhsci}{\vectorsym{H}_1}
\newcommand{\bJ}{\vectorsym{J}} 
\newcommand{\bM}{\vectorsym{M}}
\newcommand{\cK}{\mathcal{K}}
\newcommand{\cS}{\mathcal{S}}
\newcommand{\cN}{\mathcal{N}}
\newcommand{\lp}{\left(}
\newcommand{\rp}{\right)}
\renewcommand{\phi}{\varphi}
\newtheorem{theorem}{\sffamily Theorem}[section]
\newtheorem{lemma}[theorem]{\sffamily Lemma}
\newtheorem{remark}{\sffamily Remark}
\newenvironment{proof}[1][Proof]
{\begin{trivlist} \item[\hskip \labelsep {\bfseries #1}]}{\end{trivlist}}
\title{\bfseries \sffamily An FFT-accelerated direct solver for
  electromagnetic scattering from 
  penetrable axisymmetric objects} 
\numberwithin{equation}{section}
\author{ Jun Lai\footnote{School of Mathematical Sciences, Zhejiang
    University, Hangzhou, Zhejiang. Research was supported in part by
    the Funds for Creative Research Groups of NSFC (No. 11621101), the
    Major Research Plan of NSFC (No. 91630309), NSFC grant
    No. 11871427 and The Fundamental Research Funds for the Central
    Universities., Email: {\tt laijun6@zju.edu.cn}} \, and Michael
  O'Neil\footnote{Courant Institute, New York University, New York,
    NY.  Research was supported in part by the Office of Naval
    Research under award numbers \#N00014-17-1-2059 and
    \#N00014-17-1-2451. Email: {\tt oneil@cims.nyu.edu}. } }
\date{\today}
\begin{document}

\maketitle 
\begin{abstract}
  Fast, high-accuracy algorithms for electromagnetic scattering from
  axisymmetric objects are of great importance when modeling physical
  phenomena in optics, materials science (e.g. meta-materials), and
  many other fields of applied science.  In this paper, we develop an
  FFT-accelerated separation of variables solver that can be used to
  efficiently invert integral equation formulations of Maxwell's
  equations for scattering from axisymmetric penetrable (dielectric)
  bodies. Using a standard variant of M\"uller's integral
  representation of the fields, our numerical solver rapidly and
  directly inverts the resulting second-kind integral equation.  In
  particular, the algorithm of this work (1)~rapidly evaluates the
  modal Green's functions, and their derivatives, via kernel splitting
  and the use of novel recursion formulas, (2) discretizes the
  underlying integral equation using generalized Gaussian quadratures
  on adaptive meshes, and (3) is applicable to geometries containing
  edges and points.  Several numerical examples are provided to
  demonstrate the efficiency and accuracy of the aforementioned
  algorithm in various geometries.

  \vspace{\baselineskip}
  \noindent
  {\sffamily \bfseries Keywords}: Electromagnetics, M\"uller's
  integral equation, penetrable media, dielectric media,
  body of revolution, Fast Fourier Transform.
\end{abstract}


\section{Introduction}
\label{sec_intro}

While many scattering problems in computational electromagnetics
require the solution to Maxwell's equations in arbitrary complex
geometries (e.g. radar scattering from aircraft, capacitance
extraction, etc.), it is often useful to study the same scattering
problems in somewhat simpler geometries, namely axisymmetric
ones. This problem, of computing scattered waves in axisymmetric
geometries, has a very rich history in the electrical engineering
community~\cite{andreasen1965,mautz1969bor,mautz1977,MH1978,gedney_1990,Geng1999Wide,Yu2008Closed},
and recently several groups have built specialized high-order solvers
for particular applications in plasma physics~\cite{oneil2018},
resonance calculations~\cite{HK15,Helsing2016,helsing2015ieee}, and
utilizing novel integral representations~\cite{epstein_2018}.  Decades
ago, the computation of radar cross sections and scattering phenomena
in axisymmetric geometries was popular, in part, due to the very
limited computational resources available at the time: the separation
of variables procedure reduced the dimension of the problem,
effectively reducing the number of unknowns needed for boundary
integral equation discretizations by a square-root-factor. The schemes
were mostly low-order Galerkin-based (i.e. method-of-moments), and
often only obtained a modest engineering precision (if accuracies were
reported at all).  In this work, we address the problem of scattering
from homogeneous penetrable axisymmetric bodies which may contain
edges, and therefore high-order adaptive discretizations of the
geometry are required. Our algorithm is based on solving M\"uller's
integral equation using a Nystr\"om-like discretization scheme.  Using
modern developments in numerical analysis and quadrature, our solver
is, in most cases, easily able to obtain 10-digits of accuracy using a
small number of unknowns located along the boundary of a
two-dimensional cross section of the scatterer. We now briefly
introduce the time harmonic transmission problem for Maxwell's
equations, and discuss earlier work in the field.

In regions free of electric charge and current, with electric
permittivity~$\epsilon$ and magnetic permeability~$\mu$, the time harmonic
Maxwell's equations can be reduced to:
\begin{equation}\label{eq:maxwell}
  \begin{aligned}
    \nabla \times \besc &= i\omega \mu \bhsc, &\qquad 
    \nabla \times \bhsc &= - i\omega \epsilon \besc, \\
    \nabla \cdot \besc &= 0, &\qquad 
    \nabla \cdot \bhsc &= 0,
  \end{aligned}
\end{equation}
where~$\omega$ denotes the angular frequency; the time dependence on
the fields of~$e^{-i\omega t}$ has been suppressed. It will also be
useful to define the wavenumber~$k = \omega \sqrt{\epsilon \mu}$.  In
the most general case, the material parameters are allowed to be
spatially dependent tensors~\cite{imbert2018}.

There are two canonical boundary value problems in classical
electromagnetics, that of scattering from perfect electric conductors
(PECs) and scattering in non-conducting (dielectric, or penetrable)
materials with piecewise constant material properties. In this work,
we will focus on the latter scattering problem. This problem, as well
as the first, offers a surprising regime in which the mathematical and
physics model (i.e. Maxwell's equations) is very well understood
\emph{and} agrees very closely with experimental phenomena.  With this
in mind, it is useful to construct numerical methods which accurately
solve the underlying equations in order to complement, or partially
replace, experimental design methods.

To be more precise regarding the formulation of the PDE we are
focusing on, denote by~$\Omega$ a closed bounded object in
$\mathbb R^3$ with boundary~$\Gamma$ and constant material
parameters~$\epsilon_1$,~$\mu_1$.  Let the background be denoted by
$\Omega_0 = \overline{\mathbb R^3 \setminus \Omega}$, also a closed
region with the same boundary~$\Gamma$. The background material
parameters will be given by~$\epsilon_0$, $\mu_0$.  See
Figure~\ref{figure1}.  Considering Maxwell's
equations~\eqref{eq:maxwell} in this two-component geometry,
we will assume that $\omega\geq 0$,
$\epsilon_0, \mu_0 > 0$, and that
\begin{equation}\label{eq:materials}
  \begin{aligned}
    \Re{\epsilon_1} > 0 & \text{ and }  \Im{\epsilon_1} \geq 0 , \\ 
    \Re{\mu_1} > 0 & \text{ and } \Im{\mu_1} \geq 0 .
  \end{aligned}
\end{equation}
Existence and uniqueness results for a slightly more general set of
parameters, namely for those with $\Re{\epsilon_1} <0$ and $\Re{\mu_1}
<0$, can be found in~\cite{vico2018cpde} and~\cite[\S 20]{MULLER}.
This parameter regime covers all classical materials, as well as
metamaterials with non-zero dissipation~\cite{vico2018cpde}. Our
selection of parameters is a subset of this more general case, and
therefore all existence and uniqueness results still hold.

Furthermore, many problems of considerable interest in real-world
phenomena (e.g. computing radar cross sections) take the form of
\emph{scattering problems}. In this setup, the total electromagnetic
field~$(\be,\bh)$ is the
sum of two pieces: an incident field~$(\beinc, \bhinc)$ and a
scattered field~$(\besc,\bhsc)$.  Both the incoming and scattered
field are assumed to satisfy Maxwell's equations, and therefore so
does the total field.
It is also further assumed that the scattered field~$(\besc,\bhsc)$ must satisfy
the Silver-M\"uller radiation condition at infinity~\cite{Cot2}:
\begin{equation}\label{radbdy}
  \lim_{|\bx|\rightarrow \infty} \frac{1}{|\bx|}
  \lp \frac{\bx}{|\bx|}\times \sqrt{\varepsilon_0} \, \besc(\bx) -
    \sqrt{\mu_0} \, \bhsc(\bx)
    \rp = 0.  
\end{equation}  
The physical boundary
conditions along an interface between two pieces of penetrable media
state that the tangential components of the total field are
continuous across~$\Gamma$:
\begin{equation}
  \label{bdycond}
  [\bn\times \be] = 0, \qquad [\bn\times
  \bh] = 0, \qquad \text{on } \Gamma.
\end{equation}
Here,~$[\cdot]$ denotes the jump across~$\Gamma$ and~$\bn$ is the unit
normal along~$\Gamma$ that points into the unbounded
region~$\Omega_0$.
A proof of uniqueness to this boundary value
problem~(i.e. equations~\eqref{eq:maxwell} and~\eqref{radbdy} with
boundary condition~\eqref{bdycond}) with the
aforementioned material parameter constraints~\eqref{eq:materials} is given
in~\cite{vico2018cpde}, which is simpler to parse than the original
treatment by M\"uller~\cite[\S 21, Thm 61]{MULLER}.
We will refer to this as the Transmission Boundary Value
Problem (TBVP).

\begin{figure}[tbp]
  \begin{center}
    \includegraphics[width=.35\linewidth]{./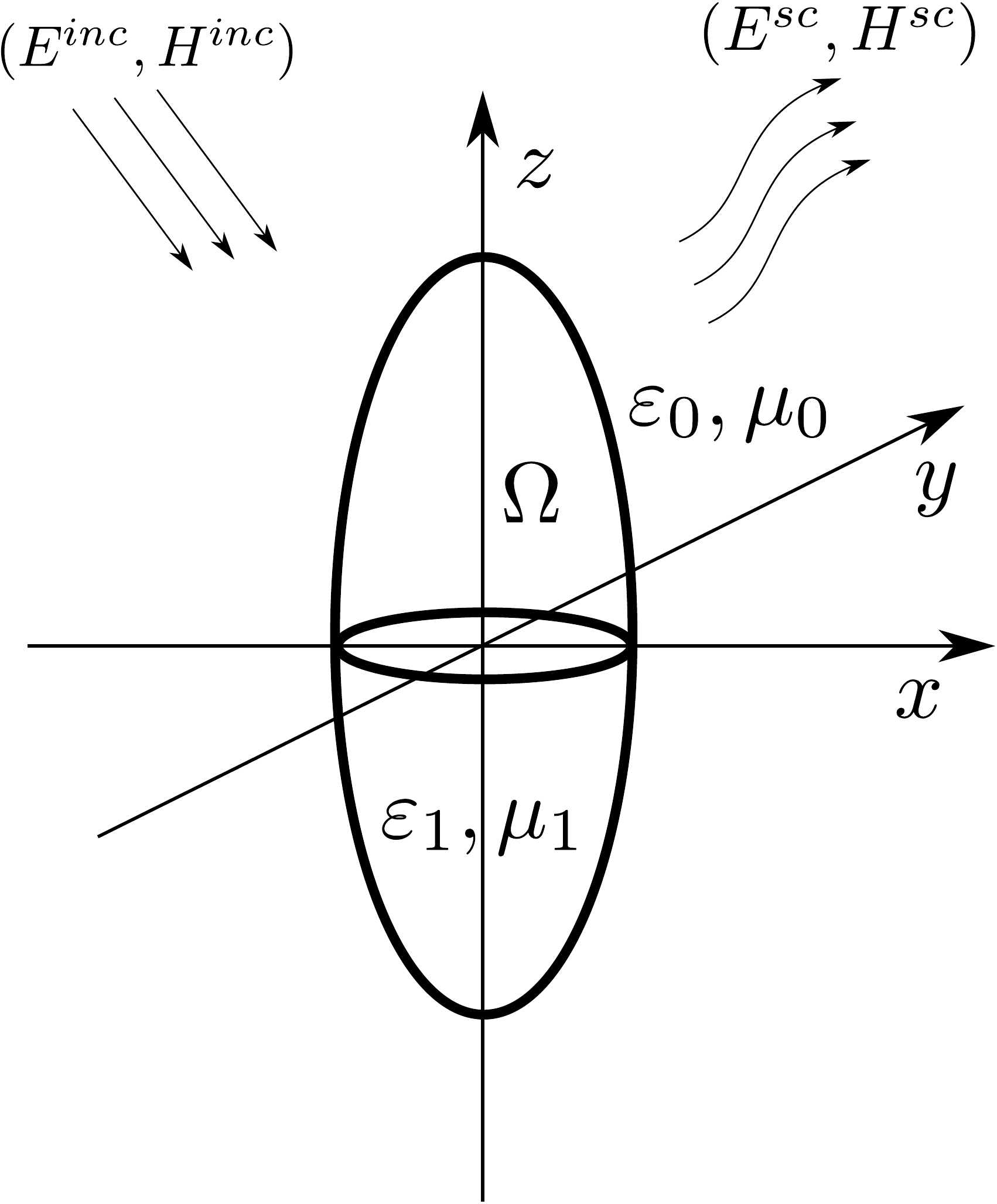}
    \caption{An axisymmetric object $\Omega$ is illuminated by a time
      harmonic incident wave $(\beinc, \bhinc)$. The object is
      penetrable with dielectric coefficients
      $(\varepsilon_1, \mu_1)$. The background material is homogeneous
      with dielectric coefficients
      $(\varepsilon_0, \mu_0)$.}\label{figure1}
  \end{center}
\end{figure} 

Scattering from perfectly conducting bodies of revolution using
integral equation methods appeared as earlier as
1965~\cite{andreasen1965}, and we believe that the transmission
boundary value problem was first directly treated in axisymmetric
domains via an integral equation method (i.e. M\"uller's formulation)
in 1966~\cite{vasilev1966}. Details of this approach, along with
Fortran code, were then collected in a technical report by Mautz and
Harrington in 1977~\cite{mautz1977}.  Since then, many groups in the
electrical engineering community have studied scattering from bodies
of revolution of various materials (e.g. from perfectly conducting
metals to inhomogeneous penetrable materials).  Some works addressed
the issue of generating matrix elements of the decouple integral
equations~\cite{gedney_1990}, and others focused on integral equation
formulations for more complicated
structures~\cite{morgan1979,medgyesi1984}.  Nearly all methods are
based on a Galerkin discretization of the relevant boundary integral
equation, frequently referred to as the boundary element
method~(BEM)~\cite{banerjee1981} in some mathematics communities and
the method of moments~(MoM)~\cite{harrington1968} in most engineering
circles.  There are at most a handful of works in the engineering
literature which have implemented higher-order methods for these
problems. Notably, a 3rd-order hybrid finite-element/boundary integral
code was presented in~\cite{dunn2006} in 2006. This code still had
visible errors in the monostatic radar cross section of a sphere when
compared with the exact Mie-series solution~\cite[Fig. 3]{dunn2006}.

However, over the past decade, integral equation methods for boundary
value problems in axisymmetric geometries have received a lot of
attention in applied and computational mathematics, likely due to the
increase in computational power (thereby enabling high-resolution
experiments to be performed on a desktop or laptop) and the persistent
mathematical and computational difficulties of designing high-order
methods in general, complex geometries (e.g. quadrature design,
geometry generation, etc.). Axisymmetric solvers based on a separation
of variables approach, as in this work, provide a robust test-bed for
the same integral equations which are used in general geometries, not
to mention that several axisymmetric geometries are of real-word
physical importance.  Advances in Nystr\"om discretization and
quadrature were applied to this problem in 2004~\cite{fleming2004},
but the first recent, high-order treatments of scattering
(electrostatic and acoustic) from axisymmetric objects were in
2010~\cite{young2010axi} and 2012~\cite{YHM2012}.  Since then, various
high-order accurate separation of variable methods for integral
equation formulations in axisymmetric geometries have been developed
for boundary value problems for Laplace's equation, the Helmholtz
equation~\cite{helsing2014,helsing2015,Liu2015}, and more recently,
Maxwell's equations~\cite{helsing2015ieee,HK15,epstein_2018,Helsing2016} using a
variety of discretization schemes, solvers, and methods for evaluating
the so-called modal Green's functions. The previous discussion is a
quite complete picture of the state of integral
equation-based solvers for boundary value problems in axisymmetric
domains. The only work relevant to high-order methods for dielectric
bodies of revolution is~\cite{Helsing2016}, but that work is
rather  terse and focused on
computing  eigenvalues/fields.  On a related note, 
however, that it has been shown that high-order
methods based on boundary perturbations and separation of variables
(of the solution to the PDE) can be made quite
efficient~\cite{nicholls2006sisc, fang2007jcp, bruno1998jasa}, and of
course, high-order methods in general geometries is always a work in
progress~\cite{Bremer2012,bruno2009krylov,bruno2003fast}.

It is also worth mentioning some details of related work: recently, a
solver for both PEC and dielectric scattering problem based on the
generalized Debye source formulation~\cite{EG10,EG13} was described
in~\cite{epstein_2018}. This solver, while also of high-order and
stable for all frequencies (including the low-frequency limit
$\omega\to 0$), is currently restricted to globally smooth geometries.
The integral equation method of this paper, based on the classic one
due to M\"uller, is also free from spurious resonances and is of
Fredholm second-kind on smooth geometries (and well-conditioned on
those with edges)  under
  condition~\eqref{eq:materials}. We do not address the situation in
which~$\omega\to 0$. The standard integral representations based on
M\"uller's formulation for penetrable media include terms which
are~$\mathcal O(\omega^{-1})$, and therefore require some care in the
static limit.

To summarize our contribution, the integral equation formulation and
solver of this work have three main features: (1) a novel method for
evaluating higher derivatives of the modal Green's functions based on
kernel splitting and recurrence relations, (2) an adaptive
discretization of the generating curve based on generalized Gaussian
quadratures, and (3) an integral equation formulation and
discretization scheme which is compatible with geometries that contain
edges. We focus our attention on M\"uller's integral equation for the
TBVP, but the discretization methods are applicable to integral
equations arising in other fields with Green's functions exhibiting
similar singularity behavior.

The paper is organized as follows: Section~\ref{sec_form}
introduces the M\"uller integral representation, and its indirect form,
for electromagnetic fields in piecewise constant penetrable media.
Section~\ref{sec_fourier} details the transformation of a surface
integral equation along an axisymmetric surface into a sequence of
decoupled integral equations along a curve in two dimensions using the
Fourier transform in the azimuthal direction.  Details of the fast
kernel evaluation based on a kernel splitting technique and recursion
formulas are given in Section~\ref{kerneleval}. Section~\ref{ggq}
discusses discretization of the sequence of
line integrals using an adaptive mesh and generalized Gaussian
quadratures for the associated weakly singular integral operators.
Numerical examples are given in
Section~\ref{sec_numeri}, including scattering results in both smooth and
non-smooth geometries.
Section~\ref{sec_con} concludes the discussion with drawbacks,
observations, and future research.

\section{Integral equation formulations}
\label{sec_form}

In what follows, we will denote the exterior scattered field by
$(\besce,\bhsce)$ and the interior scattered field by
$(\besci,\bhsci)$. It will also be assumed that the incoming field is
generated from sources exterior to~$\Omega$, i.e. in the background,
and therefore inside~$\Omega$ we simply
have $\besci=\be$ and $\bhsci=\bh$. Therefore the jump
condition~\eqref{bdycond} along~$\Gamma$ can be written as
\begin{equation}
  \label{equvalbdy}
  \begin{aligned}
    \bn\times (\beinc+ \besce) &= \bn\times \besci,  \\
    \bn\times (\bhinc+ \bhsce) &= \bn\times \bhsci.
  \end{aligned}
\end{equation}
We now turn to a derivation of the M\"uller's integral equation, and its
indirect formulation.

\subsection{The M\"uller integral equation}

As discussed in the introduction, the goal of this paper is to design
an efficient numerical solver for the time-harmonic Maxwell's
equations, given in~\eqref{eq:maxwell}, with transmission boundary
conditions~\eqref{equvalbdy} along the surface of axisymmetric
objects.  The same boundary value problem is also referred to as
scattering from piecewise homogeneous penetrable, or dielectric,
media. Using a properly formulated integral equation method
automatically ensures that the scattered field obeys the radiation
condition~\eqref{radbdy} at infinity~\cite{Cot2,chew2009book}.  We
start by defining the single-layer vector potential. Let~$\bJ$ be a
vector field supported along the surface~$\Gamma$. Then the
single-layer potential due to~$\bJ$ is given by
\begin{equation}
  \cS^k \bJ(\bx) = \int_{\Gamma} G^k(\bx,\by) \, \bJ(\by) \, da(\by),
\end{equation}
where it is assumed that $\bx \notin \Gamma$ and~$da$ is the area
element along~$\Gamma$.
Here, the function
$G^k(\bx,\by)$ is the free space Green's function for the three
dimensional Helmholtz equation:
\begin{equation}
  \label{greenfun}
  G^k(\bx,\by) = \frac{e^{ik|\bx-\by|}}{4\pi|\bx-\by|}.
\end{equation}
For~$\bx \in \Gamma$, the integral operator~$\cS^k$ is
weakly-singular and continuous across~$\Gamma$~\cite{Cot2}.
We now derive an integral equation for the TBVP based on what is
referred to as the \emph{direct method}.
The Stratton-Chu formulation~\cite{KH2015} provides a Green's-like
 reproducing formula for the incoming field in~$\Omega$
 using tangential traces of the fields on~$\Gamma$. Let~$k_0$ and~$k_1$
 be the wavenumbers in the interior and exterior of $\Omega$,
 respectively.
 Inside~$\Omega$, the
incident field~$(\beinc,\bhinc)$ satisfies
\begin{equation}
  \label{equinc}
  \begin{aligned}
    i\omega\varepsilon_0 \, \beinc &= \nabla \times \nabla \times \cS^{k_0}
    \lp \bn\times \bhinc \rp - i\omega\varepsilon_0\nabla
    \times \cS^{k_0}
    \lp \bn\times \beinc\rp, \\
    i\omega\mu_0 \, \bhinc &= -\nabla \times \nabla \times \cS^{k_0}
    \lp\bn\times \beinc \rp - i\omega\mu_0\nabla \times \cS^{k_0}
    \lp  \bn\times
   \bhinc\rp.
\end{aligned}
\end{equation}
Furthermore, according to the Extinction Theorem~\cite{MULLER}, in the
\emph{interior}~$\Omega$, the exterior scattered
field~$(\besce,\bhsce)$ vanishes:
\begin{equation}
  \label{equext}
  \begin{aligned}
    0 &= \nabla \times \nabla \times \cS^{k_0} \lp\bn\times \bhsce \rp
    - i\omega\varepsilon_0\nabla \times \cS^{k_0} \lp\bn\times \besce\rp, \\
    0 &= -\nabla \times \nabla \times \cS^{k_0} \lp\bn\times \besce\rp
    - i\omega\mu_0\nabla \times \cS^{k_0} \lp\bn\times \bhsce\rp,
  \end{aligned}
\end{equation}
and likewise, in the \emph{exterior} (the background)~$\Omega_0$, the
interior scattered field $(\besci,\bhsci)$ vanishes:
\begin{equation}
  \label{equint}
  \begin{aligned}
    0 &= -\nabla \times \nabla \times \cS^{k_1} (\bn\times \bhsci)
    + i\omega\varepsilon_1\nabla \times \cS^{k_1} (\bn\times \besci), \\
    0 &= \nabla \times \nabla \times \cS^{k_1} (\bn\times \besci) +
    i\omega\mu_1\nabla \times \cS^{k_1} (\bn\times \bhsci).
  \end{aligned}
\end{equation}
In order to derive an integral equation along the boundary~$\Gamma$,
we now let~$\bx$ approach the boundary~$\Gamma$ from the interior for
equations~\eqref{equinc} and~\eqref{equext}, and from the exterior for
equation~\eqref{equint}.  Taking the limit of the tangential
components of both sides of equations~\eqref{equinc}--\eqref{equint},
where the tangential direction is defined on a parallel surface with
respect to~$\Gamma$~\cite{Ned01}, and using the jump property of
boundary integral operators~\cite{Cot2} and the transmission
condition~\eqref{equvalbdy}, we obtain the following boundary
integral equation:
\begin{equation}
  \label{mullerrep}
  \begin{aligned}
    i\omega\varepsilon_0 \, \bn \times\beinc &=
    \frac{i\omega}{2}(\varepsilon_0+\varepsilon_1)\bM
    + \lp\cK^{k_0} - \cK^{k_1}\rp\bJ - i \omega \lp\varepsilon_0
    \cN^{k_0} -
    \varepsilon_1\cN^{k_1} \rp \bM, \\
    i\omega\mu_0 \, \bn \times \bhinc &= \frac{i\omega}{2}(\mu_0+\mu_1)\bJ
    -\lp \cK^{k_0} - \cK^{k_1} \rp \bM - i\omega \lp \mu_0\cN^{k_0}
    - \mu_1\cN^{k_1} \rp \bJ,
\end{aligned}
\end{equation}
where we have set
\begin{equation}\label{emcurrent}
\bJ = \bn\times \bh, \qquad \bM = \bn\times \be,
\end{equation}
and the boundary-to-boundary
layer potential operators~$\cK^k$ and~$\cN^k$ are
defined as
\begin{align}
\cK^k\bJ &=  \bn\times\nabla \times \nabla \times \cS^{k} \bJ,\label{efieoper}\\
\cN^k\bJ &=  \bn\times \nabla \times \cS^{k} \bJ. \label{mfieoper}
\end{align}
Both of the operators~$\cK^k$ and~$\cN^k$ have singular kernels; the
operator~$\cK^k$ is defined in the Hadamard finite part sense, and the
operator~$\cN^k$ is defined as a Cauchy principal value. The
operator~$\cN^k$ in~\eqref{mfieoper} appears in the classic Magnetic
Field Integral Equation~(MFIE), and is in fact a weakly-singular
integral operator.  Integral equation~\eqref{mullerrep} is the
well-known M\"uller formulation for electromagnetic scattering from
dielectric objects.  Due to the presence of only the~\emph{difference}
of hypersingular operators appearing in~\eqref{mullerrep}, note that
this system of integral equations is Fredholm of the second-kind
when~$\Gamma$ is smooth~\cite{rok2,MULLER}.  By the Fredholm
alternative, the existence of a solution follows from the
uniqueness~\cite{vico2018cpde,MULLER}.

When the boundary~$\Gamma$ is not smooth, but rather contains edges
and corners, the integral operators in~\eqref{mullerrep} are not
compact, but merely bounded in the appropriate Sobolev space.  In this
case the proof of uniqueness is slightly more involved, but the
results still hold. See~\cite{KH2015}, Theorem 5.52 for details. Our
numerical examples will include geometries that are both globally
smooth and merely piecewise smooth (i.e. containing edges). We see
similar high-accuracy results in both cases using the same
discretization scheme (albeit with dyadic mesh-refinement near any
geometric singularities).
 
Once equation~\eqref{mullerrep} has been solved for the surface
currents~$\bJ$ and~$\bM$, the exterior and interior scattered fields
can be evaluated using similar Green's-like identities. For~$\bx \in
\Omega_0$, we have that
\begin{equation}
  \label{exteriorfield}
  \begin{aligned}
    \besce &= -\frac{1}{i\omega\varepsilon_0}\nabla \times \nabla
    \times \cS^{k_0} \bJ + \nabla \times \cS^{k_0} \bM, \\
    \bhsce &= \frac{1}{i\omega\mu_0}\nabla \times \nabla \times \cS^{k_0}
    \bM + \nabla \times \cS^{k_0} \bJ, 
  \end{aligned}
\end{equation}
and for $\bx \in \Omega$, we have that 
\begin{equation}
  \label{interiorfield}
 \begin{aligned}
   \besci &= \frac{1}{i\omega\varepsilon_1}\nabla \times \nabla
   \times \cS^{k_1} \bJ - \nabla \times \cS^{k_1} \bM, \\
   \bhsci &= -\frac{1}{i\omega\mu_1}\nabla \times \nabla
   \times \cS^{k_1} \bM - \nabla \times \cS^{k_1} \bJ.
 \end{aligned}
\end{equation}

\subsection{An indirect formulation}
\label{sec-dual}

Another approach that is commonly used to derive an integral equation
for the TBVP is known as the \emph{indirect method}.  It is based on
the fact that the pair of vector potentials
$\nabla \times \cS^{k} \bJ$ and
$\nabla \times \nabla \times \cS^{k} \bJ/ik$, set to be the
electric field and magnetic field, respectively, automatically satisfy
the time harmonic Maxwell equations with wavenumber~$k$. This is
sometimes referred to as the Rokhlin-M\"uller representation for
transmission problems~\cite{rok2,MULLER}; it is the generalization
from electrostatics and acoustics to electomagnetics of using an
indirect linear combination of a single- and double-layer potential to
represent the solution. The radiation condition is also automatically
satisfied via the use of these layer potentials.  With this in mind,
we keep the same representation for the exterior field as before
in~\eqref{exteriorfield}, and merely replace the appropriate
dielectric constants with their interior counterparts for the interior
field to write:
\begin{equation}
  \label{interior_dual}
  \begin{aligned}
    \besci &= -\frac{1}{i\omega\varepsilon_0}\nabla \times \nabla
    \times
    \cS^{k_1} \bJ + \frac{\mu_1}{\mu_0}\nabla \times \cS^{k_1} \bM, \\
    \bhsci &= \frac{1}{i\omega\mu_0}\nabla \times \nabla \times
    \cS^{k_1} \bM + \frac{\varepsilon_1}{\varepsilon_0}\nabla \times
    \cS^{k_1} \bJ.
  \end{aligned}
\end{equation}
A priori, due to the representation, these interior and exterior fields
automatically satisfy Maxwell's equations. We merely need to solve
for~$\bJ$ and~$\bM$ to satisfy the transmission conditions.
As before, the integral equation along~$\Gamma$ can be obtained by
taking the limit of the tangential components of
representations~\eqref{exteriorfield} and~\eqref{interior_dual}
as~$\bx$ approaches the
boundary and  then forming linear
combinations to enforce the transmission boundary condition.
This procedure results in the integral equation:
\begin{equation}
  \label{dualmullrep}
  \begin{aligned}
    i\omega\mu_0\bn \times\beinc &=
    -\frac{i\omega}{2}\lp\mu_1+\mu_0\rp\bM
    + i\omega\lp \mu_1\cN^{k_1} - \mu_0\cN^{k_0} \rp \bM
    - \frac{\mu_0}{\varepsilon_0} \lp
    \cK^{k_1} -  \cK^{k_0} \rp \bJ, \\
    i\omega\varepsilon_0\bn \times \bhinc &=
    -\frac{i\omega}{2}\lp\varepsilon_1+\varepsilon_0\rp\bJ
    + i\omega\lp \varepsilon_1\cN^{k_1} - \varepsilon_0\cN^{k_0} \rp
    \bJ
    +\frac{\varepsilon_0}{\mu_0} \lp
     \cK^{k_1} -  \cK^{k_0} \rp \bM.
  \end{aligned}
\end{equation}
This system of integral equations is an indirect form of
equation~\eqref{mullerrep}~\cite{GG2013}. 

This indirect  integral equation has similar properties to the
classical M\"uller integral equation;
when the boundary~$\Gamma$ is smooth,
equation~\eqref{dualmullrep} is also Fredholm second-kind 
and admits a unique solution.
In the case of boundaries with edges and corners, since the integral
operators in the indirect formulation are of the same
  order as those in the direct formulation, 
we have the same regularity and uniqueness results.
However, note that the currents~$\bJ$ and~$\bM$ 
in equation~\eqref{dualmullrep} do not represent tangential traces of
the fields anymore. The
advantage of this formulation is that we are able to
easily construct an exact solution to the TBVP in each of the regions
using the integral representations~\eqref{exteriorfield}
and~\eqref{interior_dual}.
This provides true verification of  the accuracy of the numerical
solver. In subsequent numerical experiments, we solve
equation~\eqref{dualmullrep} for accuracy verification and
equation~\eqref{mullerrep} for obtaining tangential traces of the
fields.

In the next section, we discuss the discretization of
equation~\eqref{mullerrep} and~\eqref{dualmullrep} along the
boundaries of axisymmetric objects.

\section{Fourier representation of the
  boundary integral operators}
\label{sec_fourier}

Discretizing integral equations in complex geometries in three
dimensions to high-order is non-trivial and currently rather
computationally expensive. All aspects of this problem are active
areas of research: high-order geometry construction, quadrature,
constructing optimal fast direct solvers, and coupling fast algorithms
with quadrature methods.
  However, there exist many interesting
applications of electromagnetic scattering from axisymmetric objects
(for instance, parabolic reflectors~\cite{Bulygin2013Full}, buried
mines and unexploded ordnances~\cite{Geng1999Wide}, etc); in this
case, variables can be separated in cylindrical coordinates resulting
in a system of decoupled line integrals. The discretization and
solution of integral equations along curves in two dimensions is a
much easier problem, and very efficient schemes
exist~\cite{Hao2015,YHM2012,Liu2015,hao_2014}.  The
resulting Fourier decomposition scheme easily parallelizes and can
address a range of rather complicated axisymmetric geometries.

A concise discussion regarding the discretization of scalar-valued
integral equations along bodies of revolution is contained
in~\cite{YHM2012,helsing2015}; a modern treatment of
the vector-valued case, in particular
integral equation methods for Maxwell's equations, is discussed
in~\cite{helsing2015ieee,HK15,epstein_2018,oneil2018}.
In each case, a choice of discretization, quadrature, and subsequent
linear algebraic solver must be made.
The linear systems resulting
from a separation of variables integral equation approach are
generally small (scaling as $\sqrt{N}$, where $N$ is the number of
degrees of freedom needed on the corresponding rotated surface in
three dimensions)
and can be rapidly solved merely using Gaussian elimination or
GMRES~\cite{GMRES1986} with dense matrix-vector multiplications. However, the
choice of discretization and quadrature varies depending on the
particular geometries of interest and the numerical tools available.
For example, in~\cite{helsing2015ieee,HK15}, a panel-based discretization of
the geometry with exact product integration was used to find
interior eigenfields by solving the Magnetic Field Integral
Equation~(MFIE) and the charge integral equation~(ChIE).
In~\cite{epstein_2018, oneil2018}, the boundaries were assumed to be
smooth and the resulting integral equations based on a generalized
Debye formulation of electromagnetic fields~\cite{EG10,EG13} were
discretized using a trapezoidal rule along with hybrid
Gauss-trapezoidal quadrature rules~\cite{alpert1999}.
Neither work addressed the classic,
and widely used, M\"uller integral equation formulation for
penetrable media, which requires higher derivatives of the modal
Green's functions. Furthermore, the formulation in~\cite{epstein_2018}
(at least in its current state)
is not compatible with geometries containing edges or points;
M\"uller's integral equation is.
In what follows, we emulate the 
style and notation in our previous work~\cite{LAI20171} rather closely.

As in Figure~\ref{figure1}, assume that~$\Omega$ denotes an
axisymmetric object (i.e. body of revolution) with
boundary~$\Gamma$. The boundary will be assumed to be smooth or
contain a small number of edges or points.  Cylindrical coordinates
will be given as $(r,\theta,z)$, and we will denote the corresponding
standard unit vectors by~$(\er,\et,\ez)$.  Furthermore, we assume that
the cross section of~$\Gamma$ in the $\theta=0$ plane, also referred
to as the generating curve~$\gamma$, is parameterized counterclockwise
by~$\vct{\gamma}(s) = (r(s),z(s))$, where~$s$ denotes arclength. This
implies that the unit tangential vector along the generating curve
(and~$\Gamma$ itself) is~$\bt(s) = r'(s) \, \er +z'(s) \, \ez$, with
$r'$ and $z'$ denoting differentiation with respect to arclength,
i.e. $r' = dr/ds$.  The unit exterior normal~$\bn$ is then given
by~$\bn(s) = z'(s) \, \er - r'(s) \, \ez$.  A surface current~$\bJ$
on~$\Gamma$ can be written in terms of these coordinates
as~$\bJ = J^1 \, \bt + J^2 \, \et$. Furthermore,
since~$\Gamma$ is always smooth in
the azimuthal direction, taking the Fourier expansion of~$J^1$
and~$J^2$ with respect to~$\theta$ yields
\begin{equation}
\bJ(r,z) = \sum_m  \lp J_m^1(r,z) \, \bt + J_m^2(r,z) \, \et \rp e^{im\theta}.
\end{equation}
The dependence of the unit vectors on the
variables~$r,\theta,z$ will be omitted unless needed for clarity.
Also, sometimes it will be useful to denote functions in terms of
the arclength variable along~$\gamma$, for example~$J^1_m(s) =
J^1_m\lp r(s), z(s) \rp$.

We begin with the following lemma, which is given
in~\cite{LAI20171}, of which the proof is by direct computation.
\begin{lemma}\label{lemma1}
In cylindrical coordinates, the vector potential $\cS^k\bJ$ 
  has the Fourier expansion 
\begin{align}\label{fourierdecomp}
  \cS^k\bJ(r_t,\theta_t,z_t)
  = \sum_m\lp c_m^1(r_t,z_t) \, \er + c_m^2(r_t,z_t) \, \et
  +c_m^3(r_t,z_t) \, \ez \rp e^{im\theta_t}
\end{align}
where 
\begin{multline}
  c_m^1(r_t,z_t) =  \int_{\gamma} J^1_m(s) \, r(s) \, r'(s) \,
  g^2_m(r_t,z_t,r(s),z(s)) \, ds  \\ 
  -i\int_{\gamma}J^2_m(s) \, r(s) \,
  g_m^3(r_t,z_t,r(s),z(s)) \, ds, \label{equc1}
\end{multline}
\begin{multline}
  c_m^2(r_t,z_t) =  i\int_{\gamma} J^1_m(s) \, r(s) \, r'(s) \,
  g_m^3(r_t,z_t,r(s),z(s)) \, ds \\
  +\int_{\gamma} J^2_m(s) \, r(s) \,
  g_m^2(r_t,z_t,r(s),z(s))
  \, ds, \label{equc2}
\end{multline}
\begin{equation}
  c_m^3(r_t,z_t) = \int_{\gamma}  J^1_m(s) \, r(s) \, z'(s) \,
                    g_m^1(r_t,z_t,r(s),z(s)) \, ds, \label{equc3}
\end{equation}
and the target point is denoted as~$(r_t,\theta_t,z_t) =
(r(t), \theta_t,z(t))$. The kernels above
are defined by
\begin{align}
  g_m^1(r_t,z_t,r,z)& =\int_{0}^{2\pi}\frac{e^{ik\rho}}{4\pi \rho}
                            e^{-im\phi} \, d\phi, \label{green11}\\
  g_m^2(r_t,z_t,r,z)& =\int_0^{2\pi}\frac{e^{ik\rho}}{4\pi \rho}
                            \cos{m\phi} \, \cos{\phi} \, d\phi, \label{green21}\\
  g_m^3(r_t,z_t,r,z)& = \int_{0}^{2\pi}\frac{e^{ik\rho}}{4\pi
                            \rho}
                            \sin{m\phi} \, \sin{\phi} \, d\phi, \label{green31}
\end{align}
where
\begin{equation}\label{eq:rho}
  \rho =
  \sqrt{r_t^2+r^2-2r_tr\cos{\phi}+(z_t-z)^2}
\end{equation}
with $\phi = \theta_t - \theta$ and~$(r,z) = (r(s),z(s))$.
The functions~$g^i_m$ are commonly referred to as \emph{the modal
Green's functions}.
\end{lemma}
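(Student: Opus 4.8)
The plan is to verify the claimed expansion by direct computation, parameterizing the surface of revolution and carrying the azimuthal Fourier structure through the convolution-type kernel. First I would write a source point on~$\Gamma$ as $\by = (r(s)\cos\theta, r(s)\sin\theta, z(s))$ with area element $da(\by) = r(s)\, ds\, d\theta$, and a target point as $\bx = (r_t\cos\theta_t, r_t\sin\theta_t, z_t)$. A short calculation of $|\bx - \by|^2$ then reproduces $\rho^2$ in~\eqref{eq:rho} with $\phi = \theta_t - \theta$, confirming that $G^k(\bx,\by)$ depends on the azimuthal angles only through the difference~$\phi$.

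The key preliminary step is to re-express the source-point unit vectors in the target frame. Writing the cylindrical basis in Cartesian coordinates and projecting onto the target-frame vectors, I would establish the rotation formulas $\er(\theta) = \cos\phi\,\er(\theta_t) - \sin\phi\,\et(\theta_t)$ and $\et(\theta) = \sin\phi\,\er(\theta_t) + \cos\phi\,\et(\theta_t)$, with $\ez$ unchanged. Since $\bt(s) = r'(s)\,\er + z'(s)\,\ez$, substituting these into $\bJ = \sum_m \lp J_m^1\,\bt + J_m^2\,\et \rp e^{im\theta}$ and factoring $e^{im\theta} = e^{im\theta_t}e^{-im\phi}$ expresses the integrand entirely in the target frame, with the $e^{im\theta_t}$ factor pulled outside the $s$- and $\theta$-integrals to match the left-hand side of~\eqref{fourierdecomp}.

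Next I would change the inner integration variable from $\theta$ to $\phi = \theta_t - \theta$; since every factor is $2\pi$-periodic, the limits remain $0$ to $2\pi$. Collecting the coefficients of $\er(\theta_t)$, $\et(\theta_t)$, and $\ez$ then yields the three scalar integrals $c_m^1$, $c_m^2$, $c_m^3$, each of the form $\int_\gamma r(s)\,(\cdots)\int_0^{2\pi} G^k e^{-im\phi}\{1,\cos\phi,\sin\phi\}\,d\phi\,ds$. The decisive observation is that $\rho$, and hence $G^k$, is an \emph{even} function of $\phi$. After shifting the $\phi$-integral to $[-\pi,\pi]$ by periodicity and writing $e^{-im\phi} = \cos m\phi - i\sin m\phi$, every integrand that is odd in $\phi$ integrates to zero; the surviving pieces are exactly $g_m^1 = \int_0^{2\pi} G^k e^{-im\phi}\,d\phi$, $g_m^2 = \int_0^{2\pi} G^k\cos m\phi\cos\phi\,d\phi$, and $g_m^3 = \int_0^{2\pi} G^k\sin m\phi\sin\phi\,d\phi$, the last entering through the $\sin\phi$ cross-terms together with a factor of~$\pm i$. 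Matching the resulting expressions against~\eqref{equc1}--\eqref{equc3} completes the verification.

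The computation itself is routine; the only place demanding care is the basis-rotation bookkeeping and the attendant sign tracking. The cross-coupling of $J_m^1$ and $J_m^2$ into both the $\er$ and $\et$ components, and the imaginary unit multiplying $g_m^3$ with opposite signs in $c_m^1$ and $c_m^2$, arise precisely from the $\sin\phi$ terms in the rotation formulas combined with the parity selection of the $\phi$-integrals. I expect this sign bookkeeping to be the main, if modest, obstacle.
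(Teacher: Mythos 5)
Your proposal is correct and takes essentially the same route as the paper, which cites the lemma from earlier work and notes only that "the proof is by direct computation" --- precisely the computation you outline. Your basis-rotation formulas, the change of variables to $\phi = \theta_t - \theta$, and the parity argument (evenness of $\rho$ in $\phi$ killing the $\sin m\phi\cos\phi$ and $\cos m\phi\sin\phi$ terms) all check out, including the signs $-i\,g_m^3$ in $c_m^1$ and $+i\,g_m^3$ in $c_m^2$.
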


Using Lemma~\ref{lemma1}, we can also obtain the azimuthal
Fourier decomposition of the layer potentials~$\cN^k\bJ$
and~$\cK^k\bJ$ along the boundary:
\begin{multline}
 \cN^k\bJ(r_t,\theta_t,z_t)
 =  \sum_{m}
 \lp
 \lp \frac{\partial c^1_m}{\partial z_t}-\frac{\partial c^3_m}{\partial
   r_t} \rp \bt \right. \\
\left. 
   +\lp
 \frac{z_t'}{r_t}
 \lp c^2_m+r_t\frac{\partial c^2_m}{\partial r_t} -imc^1_m \rp
 +r_t' \lp \frac{im}{r_t}c_m^3-\frac{\partial c^2_m}{\partial
  z_t} \rp\rp\et
\rp e^{im\theta_t},
\end{multline}
and by using the fact that~$\cK^k\bJ  = \bn\times
\lp k^2 \cS^k\bJ+\nabla \nabla \cdot \cS^k\bJ  \rp$, where~$\cK^k$ is
given in~\eqref{efieoper},  we have
\begin{equation}
  \resizebox{.9 \textwidth}{!} {
    $
  \begin{aligned}
    \cK^k\bJ(r_t,\theta_t,z_t) &= \sum_m
    \bigg[k^2\bigg(c_m^2\bt+\big(z_t'c_m^3+r'_tc_m^1\big)\et\bigg)
    +\bigg(\frac{im}{r_t^2}c_m^1 + \frac{im}{r_t}\frac{\partial c_m^1}
    {\partial r_t} -\frac{m^2}{r_t^2}c_m^2 +
    \frac{im}{r_t}\frac{\partial c_m^3}
    {\partial z_t}\bigg)\bt  \\
    &\qquad + \bigg( z'_t\bigg(\frac{1}{r_t}\frac{\partial
      c_m^1}{\partial z_t} + \frac{\partial^2c_m^1}{\partial z_t
      \partial r_t} +\frac{im}{r_t}\frac{\partial c_m^2}{\partial z_t}
    + \frac{\partial^2 c_m^3}{\partial z_t^2}\bigg) +
    r'_t\bigg(-\frac{1}{r^2_t}c_m^1+\frac{1}{r_t}\frac{\partial
      c_m^1}{\partial r_t}+\frac{\partial^2c_m^1}{\partial r_t^2} \\
    &\qquad - \frac{im}{r_t^2}c_m^2 +\frac{im}{r_t}\frac{\partial
      c_m^2}{\partial r_t} + \frac{\partial^2c_m^3}{\partial
      r_t\partial z_t}\bigg) \bigg)\et\bigg] e^{im\theta_t}.
\end{aligned}
$ }
\end{equation}
Since this expression is valid for $(r_t,z_t)$ along the generating
curve, the gradient of~$\gamma$ at~$(r_t,z_t)$ is given
by~$\nabla\gamma(t) = (r'_t,z'_t)$.

In order to evaluate the layer potentials~$\cN^k\bJ$ and~$\cK^k\bJ$
rapidly, the values of~$c^1_m$,~$c^2_m$ and~$c^3_m$, as well as their
derivatives, need to be computed efficiently.  The evaluation of the
coefficients~$c^i_m$ can be performed in two steps: (1) Compute
$g^i_m$ and its derivatives, and then (2) integrate~$g_m^i$ and its
derivatives (according to the above formulae for $c^i_m$) along the
generating curve~$\gamma$. There are no known numerically useful
closed-form expressions for~$g_m^i$. The evaluation of these functions
occupies a significant portion of the run-time of the resulting
solver~\cite{LAI20171} (approximately $50\%$, as shown in
Table~\ref{tab:tables}), and therefore an efficient scheme for
computing them is important.  Expansions of these functions in terms
of half-order Hankel functions have, as of yet, proven
to be somewhat expensive to evaluate~\cite{conway_cohl}, and designing
robust contour integration methods for large values of~$m$ is quite
complicated~\cite{gustafsson2010accurate}.  Furthermore, note that
since~$g_m^i$ has a singularity when $z_t=z(s)$, specialized
quadratures must be used when discretizing and integrating
along~$\gamma$. In the next two sections, we give a detailed
discussion on the evaluation and integration of the modal Green's
functions.

\section{Fast modal kernel evaluation}
\label{kerneleval}

Given the important role they play in solving PDEs and integral
equations in axisymmetric domains, the speed of evaluating the kernel
functions~\eqref{green11}-\eqref{green31} is a very important
consideration as it can affect the overall efficiency of the entire
solver.  Due to the existence of singularities in the free-space
Green's functions, specialized routines must be developed for the
evaluation of the modal Green's functions.
In~\cite{LAI20171}, we previously applied adaptive Gaussian quadrature
to evaluate~$g_m^i$. Although high accuracy was achieved, the
algorithm was time consuming as every matrix entry required several
calls to an adaptive integration routine.  In this section, we adopt
an accelerated method based on recurrence relations and kernel
splitting, as discussed
in~\cite{epstein_2018,helsing2014,helsing2015}, and further develop an
efficient evaluation procedure for computing first and second
derivatives of~$g^i_m$ based on novel recurrences.

Let $(r(s),z(s))$ be replaced by~$(r_s,z_s)$ for notational
simplicity. Using the fact that~$\rho$ in equation~\eqref{eq:rho} is
an even function with respect to~$\phi$ on~$[0,2\pi]$, we observe that
\begin{equation}
  \label{equrelation}
    g_m^2 = \frac{g_{m+1}^1  + g_{m-1}^1}{2}, \qquad 
    g_m^3 = \frac{g_{m+1}^1 -      g_{m-1}^1}{2i}
\end{equation}
for any mode $m$.

In general, unless the wave numbers of the dielectric object and
background media are particularly high, only a modest number of
Fourier modes $m$ are needed for high-precision discretizations of the
integral equations.  Once the incoming data has been Fourier
transformed along the azimuthal direction on the boundary, the number
of Fourier modes needed in the discretization can be determined based
on the decay of the coefficients of the data. We will denote this
number, i.e. the bandwidth of the data in the azimuthal direction,
by~$M>0$.  Therefore, our goal is to evaluate all~$g^i_m$ for
$|m|\leq M$.  Furthermore, based on the relations
in~\eqref{equrelation}, we need only to evaluate~$g_m^1$ and its
derivatives efficiently; the other functions can be obtained by linear
combinations.

When the target~$(r_t,z_t)$ is far away from the source~$(r_s,z_s)$,
the integral in~\eqref{green11} can be discretized using the periodic
trapezoidal rule with $2M+1$ points and therefore the fast Fourier
transform (FFT) can be used to evaluate~$g^1_m$ for $m=-M,\ldots,M$.
However, for~$(r_t,z_t)$ near~$(r_s,z_s)$, the integrand is nearly
singular and a prohibitively large number of discretization points
would be needed to obtain sufficient quadrature accuracy.
To overcome this difficulty, we
adopt the kernel splitting technique, which has been successfully
applied in~\cite{helsing2014,helsing2015,epstein_2018,YHM2012}.
The main idea is to explicitly split the
integrand into smooth and singular parts. Fourier coefficients of the
smooths parts can be obtained numerically via the FFT, and  it turns out that
the coefficients of the singular part can be obtained analytically via
recurrence relations. The Fourier coefficients of the original kernel
can then be obtained via discrete convolution. See the previous
references for thorough details, particularly~\cite{epstein_2018},
which provides estimates on the size of the FFT needed and other
important tuning parameters.

In the following subsections, we discuss details of
evaluating~$g_m^1$ and its first and second derivatives for
fixed sources and targets. Previous work on evaluating modal Green's
functions has avoided second derivatives because the formulations did
not involve hypersingular terms. However, the M\"uller formulations
require the evaluation of the difference of hypersingular operators,
which necessitates the following new discussion.

\subsection{Evaluation of the Modal Green's Function}
\label{subsec_val}

We begin by splitting~$g_m^1$ into two parts:
\begin{equation}
  \label{split1}
  \begin{aligned}
  g_m^1(r_t,z_t,r_s,z_s)
  &= \int_{0}^{2\pi}\frac{\cos(k\rho)}{4\pi \rho}e^{-im\phi} \, d\phi
  + \int_{0}^{2\pi}\frac{i\sin(k\rho)}{4\pi \rho}e^{-im\phi} \, d\phi \\
  &= A_m^1 + A_m^2,
\end{aligned}
\end{equation}
where $\rho = \rho(\phi)$, given in equation~\eqref{eq:rho}.
The integrand in~$A_m^2$ is analytic with respect to~$\phi$ since the
singularity is removable for~\mbox{$\rho=0$}.  Therefore, the FFT
can be applied directly to find all~$A_m^2$
for~$-M\le m \le M$ at a cost of~$\mathcal{O}(M\log M)$ flops.
Note that special care must be taken in evaluating the kernel
in~$A^2_m$ for small values of $\rho$; a truncated
Taylor series about $\rho=0$ is an easy solution.

For~$A_m^1$, we first consider the case~$k=0$ and denote it by
$\hat q_m$. As shown in~\cite{YHM2012,cohl_1999}, it holds that
\begin{equation}
  \label{lm}
  \begin{aligned}
    \hat q_m &= \int_{0}^{2\pi}\frac{e^{-im\phi}}{4\pi \rho}d\phi   \\
    &= \frac{1}{4\pi}\int_0^{2\pi} \frac{\cos m \phi }
    {\sqrt{r_t^2+r^2_s-2r_tr_s\cos{\phi}+(z_t-z_s)^2}} d\phi \\
    &= \frac{1}{2\pi\sqrt{r_tr_s}} \int_0^{2\pi}\frac{\cos m \phi}
    {\sqrt{8(\chi-\cos\phi)}}d\phi \\
    &= \frac{1}{2\pi\sqrt{r_tr_s}} \mathcal{Q}_{m-1/2}(\chi),
\end{aligned}
\end{equation}
where
\begin{equation}
  \chi=\frac{r_t^2+r^2_s+(z_t-z_s)^2}{2r_tr_s} \geq 1
\end{equation}
and~$\mathcal{Q}_{m-1/2}$ is the Legendre
function of the second-kind of half-degree.
It can be evaluated by the following
recursion formula~\cite{Hand2010}:
\begin{equation}\label{recurform}
  \mathcal{Q}_{m-1/2}(\chi)
  = 4\frac{m-1}{2m-1}\chi \mathcal{Q}_{m-3/2}(\chi)
  -\frac{2m-3}{2m-1}\mathcal{Q}_{m-5/2}(\chi)
\end{equation}
with
\begin{equation}
  \begin{aligned}
    \mathcal{Q}_{-1/2}(\chi) &= \sqrt{\frac{2}{\chi+1}}
    \, K \lp \sqrt{\frac{2}{\chi+1}} \rp, \\
    \mathcal{Q}_{1/2}(\chi) &= \chi\mathcal{Q}_{-1/2}(\chi) -
    \sqrt{2(\chi+1)} \, E\lp \sqrt{\frac{2}{\chi+1}} \rp,
\end{aligned}
\end{equation}
where~$K$ and~$E$ are the complete elliptic integrals of the first and
second kinds.  While standard from the theory of orthogonal
polynomials, we provide a derivation of the above recurrence formula
in the appendix. This derivation is useful in order to obtain
efficient recurrence relations for higher derivatives. Unfortunately,
the recurrence formula~\eqref{recurform} is unstable for
increasing~$m$, and therefore Miller's algorithm must be
implemented~\cite{gil_2007}. For~\mbox{$\chi \approx 1$}, the forward
recurrence is only mildly unstable, and can be used with
caveats. See~\cite{epstein_2018} for an estimate on the number of
terms that can be evaluated accurately in this regime.

In order to evaluate~$g^1_m$ for $k\ne 0$, we apply the convolution
technique proposed in~\cite{YHM2012} with a slight modification.
Note that~$A^1_m$ is merely the Fourier transform of a
product of functions, which can be computed as a discrete convolution:
\begin{equation}\label{eq:discA}
  \begin{aligned}
    A^1_m &= \int_0^{2\pi} f(\phi) \, q(\phi) \, e^{-im\phi} \, d\phi \\
    &= \frac{1}{2\pi} \sum_{n=-\infty}^\infty \hat f_n \, \hat q_{m-n},
  \end{aligned}
\end{equation}
where
\begin{equation}
  f(\phi) = \cos \lp k \rho(\phi) \rp, \qquad 
  q(\phi) = \frac{1}{4\pi \rho(\phi)}
\end{equation}
and~$\hat f_m$ denotes the $m$th Fourier series coefficient
of the function~$f$:
\begin{equation}
    \hat f_m = \int_0^{2\pi} f(\phi) \, e^{-im\phi} d\phi.
\end{equation}
The representation of~$A^1_m$ in~\eqref{eq:discA}
is a discrete convolution with infinite extent, which is
impractical for numerical purposes. However, since~$f=\cos k\rho$ is an analytic
function of~$\phi$, its Fourier series converges rapidly. Therefore,
the above convolution can be truncated:
\begin{equation}
A^1_m = \frac{1}{2\pi} \sum_{n=-N}^N \hat f_n \, \hat q_{m-n},
\end{equation}
where~$N$ is chosen such that~$|\hat f_n| \leq \epsilon$ for some
user-specified precision~$\epsilon >0$. As is well
known~\cite{briggs_1995}, discrete convolutions can be computed
efficiently using the FFT and properties of the discrete Fourier
transform~(DFT). To begin with, denote by~$\hat{\vct{f}}$ the vector
of Fourier coefficients $\hat f_n$, by $\hat{\vct{q}}$ the vector
of Fourier coefficients $\hat q_n$, and by~$\vct{A}^1$ the vector
with elements $A_{-N}^1, \ldots, A_N^1$. Letting~$\mtx{D}$ denote the
$(2N+1) \times (2N+1)$ DFT matrix, we have:
\begin{equation}
  \begin{aligned}
    \vct{A}^1 &= \mtx{D}^{-1} \mtx{D} \vct{A}^1 \\
    &= \mtx{D}^{-1} \mtx{D} \lp \hat{\vct{f}} \ast \hat{\vct{q}}
    \rp \\
    &= \mtx{D}^{-1} \lp \mtx{D} \hat{\vct{f}} \odot
    \mtx{D} \hat{\vct{q}}   \rp \\
    &= \mtx{D}^{-1} \lp \mtx{D}^2 \vct{f} \odot  
    \mtx{D} \hat{\vct{q}}   \rp \\
    &= \mtx{D}^{-1} \lp  \vct{f} \odot  
    \mtx{D} \hat{\vct{q}}   \rp
  \end{aligned}
\end{equation}
where~$\ast$ denotes the cyclic convolution,~$\odot$ denotes the
pointwise Hadamard product of two vectors, and~$\vct{f}$ denotes
the vector obtained from a $2N+1$-point equispaced sampling of the
functions $f$ on the interval $[0,2\pi)$.
The last identity
follows from the fact that~$f$ is an even function.  Note that a
larger extent of coefficients~$\hat q_n$ will be needed than
for~$\hat f_n$ due to the definition of the convolution; denote this
bandwidth as~$M_q$. Due to the fact that the coefficients~$\hat f_n$
decay rapidly, this sequence can be zero-padded easily (and recall
that~$\hat q_n$ is obtained analytically).
In practice, the bandlimit~$M_q$ can be determined based on the desired number of
coefficients~$A^1_m$ and subsequently all FFTs are of size~$2M_q+1$,
with~$\vct{f}$ being computed from a~$(2M_q+1)$-equispaced sampling of~$f$
on~$[0,2\pi)$.

\subsection{Evaluation of the first derivatives}
\label{subsec_fistd}

Let us now focus on the derivative of~$g^m_1$ with respect to~$r_t$,
as the evaluation of the derivative with respect to~$z_t$ is
similar. The derivative with respect to $r_t$ is given as
\begin{equation}
  \label{firstder}
  \frac{\partial g_m^1}{\partial r_t} =
  \frac{1}{4\pi}\int_0^{2\pi}\frac{(ik\rho e^{ik\rho}-e^{ik\rho}) \,
    r_d}
  {\rho^3} \, e^{-im\phi} \, d\phi ,
\end{equation}
where $r_d=r_t-r_s\cos\phi$. Splitting the kernel on the right hand
side of~\eqref{firstder} into smooth and singular parts leads to
\begin{equation}
  \begin{aligned}
    \frac{\partial g_m^1}{\partial r_t}
    &= \frac{1}{4\pi}\int_0^{2\pi} \frac{i\lp k\rho \cos(k\rho)
      -\sin(k\rho) \rp \, r_d}{\rho^3} \, e^{-im\phi} \, d\phi  \\
    & \qquad-\frac{1}{4\pi}\int_0^{2\pi}
    \frac{-k\sin(k\rho) \, r_d}{\rho} \, e^{-im\phi} \, d\phi
    - \frac{1}{4\pi}\int_0^{2\pi} \frac{\cos(k\rho) \, r_d}{\rho^3}
    \, e^{-im\phi} \, d\phi \\ 
    &= B_m^1+ B_m^2+B_m^3,
  \end{aligned}
\end{equation}
where~$\rho = \rho(\phi)$ as before.  By Taylor expansion, one can
easily see that the integrands in~$B_m^1$ and~$B_m^2$ are analytic
with respect to~$\phi$ due to the removable singularities. Therefore,
once again a modestly-sized FFT is an efficient mean to
evaluate~$B_m^1$ and~$B_m^2$. For~$B_m^3$, we again consider the case
when~$k=0$ and~$r_d=1$, which is denoted by~$\hat p_m$,
\begin{equation}
  \label{pm}
  \begin{aligned}
    \hat p_m &= \frac{1}{4\pi} \int_0^{2\pi}
    \frac{1}{\rho^3} \, e^{-im\phi} \, d\phi \\
    &= \frac{1}{4\pi}\int_0^{2\pi}
    \frac{\cos m \phi
    }{(r_t^2+r_s^2-2r_tr_s\cos{\phi}+(z_t-z_s)^2)^{3/2}}
    \, d\phi  \\
    &= \frac{1}{4\pi (2r_tr_s)^{3/2}} \int_0^{2\pi}
    \frac{\cos m \phi}{(\chi-\cos\phi)^{3/2}} \, d\phi \\
    &= \frac{1}{4\pi (2r_tr_s)^{3/2}} \, \mathcal{S}_m(\chi).
\end{aligned}
\end{equation}
It can be shown the function~$\mathcal{S}_m$ is related to the
functions~$\mathcal{Q}_{m-1/2}$ and~$\mathcal{Q}_{m-3/2}$.
The following relationship  holds for~$m \ge 1$:
\begin{eqnarray}
  \mathcal{S}_m(\chi)  = \sqrt{8} \frac{
  (1-2m)\chi \, 
  \mathcal{Q}_{m-1/2}(\chi)-{(1-2m)} \, \mathcal{Q}_{m-3/2}(\chi)
  }{\chi^2-1}.
\end{eqnarray}
A proof of the above formula is contained in
Appendix~\ref{sec:recurrence}. The sequence~$\hat p_m$ can easily be
obtained once the values~$\mathcal{Q}_{m-1/2}(\chi)$ have been computed.
With~$p_m$ available, the sequence of~$B_m^3$ can be obtained via a
convolution technique similar to the evaluation of~$A_m^1$.  A nearly
identical procedure can be carried out to
compute~$\partial g^1_m/\partial z_t$.

\subsection{Evaluation of the second derivatives}
\label{subsec_secd}

The procedure for evaluating the second partial derivatives of~$g^1_m$
is similar to that of evaluating the first partial derivatives, as
detailed in the previous section.  As before, only the evaluation
of~$\partial^2 g_m^1/\partial r^2_t$ will be discussed since the
evaluation of~$\partial^2 g_m^1/\partial z^2_t$
and~$\partial^2 g_m^1/\partial z_t\partial r_t$ are very similar.
Taking the second derivative of~$g_m^1$ with respect to~$r_t$ we have,
\begin{multline}
  \frac{\partial^2 g_m^1}{\partial r^2_t} = \frac{1}{4\pi}\int_0^{2\pi}
  \lp \lp ik\frac{ik\rho e^{ik\rho}-e^{ik\rho}}{\rho^2}-
  \frac{ik\rho^2e^{ik\rho}-2e^{ik\rho}}{\rho^3} \rp \cdot
  \frac{r_d^2}{\rho^2} \right. \\
+ \left. \frac{ik\rho e^{ik\rho}-e^{ik\rho}}{\rho^2}\cdot
\lp \frac{1}{\rho}-\frac{r_d^2}{\rho^3} \rp \rp \, e^{-im\phi} \, d\phi,
\end{multline}
where, as before, $r_d = r_t-r_s\cos\phi$.
In order to apply the kernel splitting technique, we now decompose the
right hand side in the above formula into the sum of six terms, listed
according to the order of the singularities in their integrands:
\begin{equation}
  \label{eq:cvals}
\frac{\partial^2 g_m^1}{\partial r^2_t} = C_m^1 + C_m^2+ C_m^3+C_m^4+C_m^5+C_m^6
\end{equation}
with 
\begin{equation}
  \begin{aligned}
    C_m^1 &= \frac{1}{4\pi}\int_0^{2\pi} \frac{ik\rho\cos{k\rho}
      -i\sin{k\rho}}{\rho^3} \, e^{-im\phi} \, d\phi,  \\
    C_m^2 &= -\frac{1}{4\pi}\int_0^{2\pi} \frac{k\sin {k\rho}}{\rho}
    \cdot \frac{1}{\rho} \, e^{-im\phi} \, d\phi,  \\
    C_m^3 &= -\frac{1}{4\pi}\int_0^{2\pi}
    \frac{ik^2\sin(k\rho)}{\rho}\cdot\frac{r_d^2}{\rho^2} \,
    e^{-im\phi} \, d\phi,  \\
    C_m^4 &= -\frac{1}{4\pi}\int_0^{2\pi}
    \lp k^2\cos(k\rho)+3 \lp\frac{-k\rho \sin {k\rho}}{\rho^2}
    +\frac{ik\cos{k\rho}-i\sin{k\rho}}{\rho} \rp \rp
    \frac{r_d^2}{\rho^3} \, e^{-im\phi} \, d\phi,  \\
    C_m^5 &= -\frac{1}{4\pi}\int_0^{2\pi}\frac{\cos k\rho}{\rho^3} \,
    e^{-im\phi} \, d\phi,  \\
    C_m^6 &= \frac{1}{4\pi}\int_0^{2\pi} \frac{(\cos
  k\rho) \, r_d^2}{\rho^5} \, e^{-im\phi} \, d\phi.
\end{aligned}
\end{equation}
Examining each term closely, we have that:
\begin{itemize}
  \item By Taylor series expansion
the integrand in~$C_m^1$ is analytic.  It can therefore be evaluated
via the trapezoidal rule.
\item To evaluate~$C_m^2$, since the singularity
in the integrand is~$1/\rho$, we can apply the same method as
evaluating~$A_m^1$ in Section~\ref{subsec_val}.

\item The singular part of the integrand of~$C^m_3$ is~$1/\rho^2$. Consider
the Fourier transform of~$1/\rho^2$ with respect to~$\phi$, which we
denote by~$\hat h_m$:
\begin{equation}
  \label{hm}
  \begin{aligned}
    \hat h_m &= \frac{1}{4\pi}\int_0^{2\pi}\frac{e^{-im\phi}}{\rho^2} \, d\phi \\
    & = \frac{1}{4\pi}\int_0^{2\pi}\frac{\cos m \phi }
    {r_t^2+r_s^2-2r_tr_s\cos{\phi}+(z_t-z_s)^2} \, d\phi \\
    &= \frac{1}{8\pi r_tr_s} \int_0^{2\pi } \frac{\cos m \phi }
    {(\chi-\cos\phi)} \, d\phi \\
    &= \frac{1}{\pi r_tr_s} \mathcal{T}_m(\chi),
  \end{aligned}
\end{equation}
where~$\mathcal T_m$ satisfies the following recursion formula
for $m\ge 1$ (proof given in Appendix~\ref{sec:recurrence}):
\begin{equation}
  \mathcal{T}_{m+1}(\chi) = 2\chi \, \mathcal{T}_{m}(\chi)-\mathcal{T}_{m-1}(\chi).
\end{equation}  
Notably, this is the same recurrence relation as for 
Chebychev polynomials, but with different initial values:
\begin{eqnarray}
\mathcal{T}_0(\chi) & = & \frac{2\pi}{\sqrt{\chi^2-1}}, \\
\mathcal{T}_1(\chi) & = & -2\pi+\chi\mathcal{T}_0(\chi) .
\end{eqnarray}

\item The singularity in the
integrand of~$C_m^4$ is~$1/\rho^3$, and it can be evaluated following
the discussion regarding the computation of~$B_m^3$ in
Section~\ref{subsec_fistd}.

\item For~$C_m^5$ and~$C_m^6$, their singular terms are $1/\rho^3$ and
$1/\rho^5$, respectively. Although there are no essential difficulties
in evaluating them using the same kernel splitting technique, note
that they
will eventually be used when evaluating the second derivative
of~$c_m^i$ in equations~\eqref{equc1}--\eqref{equc3}. This requires
the evaluation of hypersingular kernels. However, this numerical
difficulty can be avoided by observing that only the difference
of~$C_m^5$ and~$C_m^6$ (with different wavenumbers~$k_0$ and~$k_1$)
appear in the integral equations~\eqref{mullerrep}
and~\eqref{dualmullrep}.  The order of the singularity in
evaluating~$C^5_m$ or~$C^6_m$ can therefore be reduced by instead directly
evaluating the difference kernel.  Denote by~$C_m^{5,k_i}$
and~$C_m^{6,k_i}$ the dependence on wavenumber~$k_i$, for $i=0,1$. By
direct computation, we have:
\begin{equation}
  \begin{aligned}
    C_m^{5,k_0}-C_m^{5,k_1} &=
    -\frac{1}{4\pi}\int_0^{2\pi} \lp \frac{\cos k_0\rho
       -\cos k_1\rho}{\rho^2}\rp \frac{1}{\rho} \, e^{-im\phi} \, d\phi \\
     C_m^{6,k_0}-C_m^{6,k_1} & = \frac{1}{4\pi}\int_0^{2\pi}
     \lp \frac{\cos k_0\rho-\cos k_1 \rho}{\rho^2} \rp
     \frac{r_d^2}{\rho^3} \, e^{-im\phi} \, d\phi.
\end{aligned}
\end{equation}
Therefore, the singularities in the integrand of the differences
of~$C_m^{5,k_i}$ or~$C_m^{6,k_i}$ are~$1/\rho$ and~$1/\rho^3$,
respectively.  Their evaluation follows the same procedure as
evaluating~$A_m^1$ and~$B_m^3$. Once all the values of~$C_m^i$ have
been obtained, the evaluation of~$\partial^2 g_m^1 /\partial r_1^2$ is
obtained via the summation in~\eqref{eq:cvals}.

\end{itemize}

Lastly, it is worth pointing out that for a fixed target~$r_t,z_t$ and
source~$r_s,z_s$ the evaluation of~$\hat q_m$,~$\hat p_m$
and~$\hat h_m$ in equations~\eqref{lm},~\eqref{pm} and~\eqref{hm}
dominate the cost of kernel evaluation. However, these values can be
reused during the computation of the value of~$g_m^1$ and its
derivatives. This offers a significant savings in the cost of kernel
evaluation.

\section{Generalized Gaussian Quadrature}
\label{ggq}

Once a scheme is in place to evaluate the modal Green's functions and
their derivatives, the next step is to discretize each decoupled modal
integral equation along the generating curve~$\gamma$.  We use a
Nystr\"om-like method for discretizing the integral equations.  Since
the modal Green's functions have logarithmic
singularities~\cite{conway_cohl,cohl_1999}, any efficient
Nystr\"om-like scheme will require a quadrature that accurately
evaluates weakly-singular integral operators.  For high-accuracy
integration, one can construct the quadrature based on the kernel
splitting technique as in~\cite{Kress2010}. However, this will become
tedious given the many formulas for the singularities in the derivatives
of~$g_m^i$. For our numerical simulations, we implemented a
panel-based discretization scheme using generalized Gaussian
quadratures to evaluate the layer potentials. See~\cite{BGR2010,
  hao_2014} for an in-depth discussion of generalized Gaussian
quadrature schemes. The panel-based discretization scheme of this
paper, as opposed to that based on hybrid-Gauss trapezoidal
rules~\cite{alpert1999}, as presented
in~\cite{oneil2018,epstein_2018}, allow for adaptive discretizations,
in particular, axisymmetric surfaces in three dimensions with edges
and points.

To this end, we describe the procedure in general for any
weakly-singular integral operator with 
logarithmically-singular kernel~$g$. For a smooth
function~$\psi$, the goal is to evaluate, with high-order accuracy,
the integral
\begin{equation}\label{singint}
\mathcal S \psi(\bx) = \int_{\gamma} g(\bx,\by) \, \psi(\by) \, ds(\by),
\end{equation}
where~$\bx$ and $\by$ denote \emph{targets} and \emph{sources} on the
generating curve~$\gamma \subset \mathbb R^2$.
As before, assume that the generating curve~$\gamma$ is parameterized
as~$\gamma(s)$, where $s$ is arclength. The total arclength will be
denoted by $L$. The parameter domain~$[0,L]$ is first divided into~$N$
panels. This division
can be either uniform or nonuniform, depending on the particular geometry.
Each of the corresponding~$N$ image panels~$\gamma_i \subset \gamma$,
and therefore any function supported on it,
is discretized using $p$~scaled Gauss-Legendre nodes.
In a true Nystr\"om discretization scheme, the integral in~\eqref{singint}
would be approximated as
\begin{equation}
  \mathcal S \psi(\bx_{ln}) \approx
  \sum_{i = 1}^{N} \sum_{j=1}^{p} w_{lnij} \, g(\bx_{ln},\bx_{ij}) \,
  \psi(\vectorsym{x}_{ij}),
\end{equation}
where~$\bx_{ln}$ is the~$n$th Gauss-Legendre node on
panel~$l$,~$\bx_{ij}$ is the~$j$th Gauss-Legendre node on panel~$i$,
and~$w_{lnij}$ is the Nystr\"om quadrature weight for this term. However, it is
generally numerically difficult to derive efficient, high-accuracy
Nystr\"om schemes in which the quadrature nodes are the same
as the discretization nodes (i.e. the points at which $\psi$ are
sampled, the $p$ Gauss-Legendre nodes on each panel). Often it can be
very beneficial to use additional (or at least different) quadrature
\emph{support nodes} for approximating the integral.
In general, we instead approximate the layer
potential~$\mathcal S \psi$ in~\eqref{singint} by
\begin{equation}\label{disequ}
  \mathcal S \psi(\bx_{ln}) \approx
  \sum_{i = 1}^{N} \sum_{j=1}^{p} Q (\bx_{ln},\bx_{ij}) \,
  \psi(\vct{x}_{ij}),
\end{equation}
where $Q$ is referred to as the \emph{quadrature kernel}.
For non-adjacent panels
(i.e. when the source and target are well separated), we simply set
$Q(\bx_{ln},\bx_{ij})= w_{ij} \, g(\bx_{ln},\bx_{ij})$,
where~$w_{ij}$ is the standard $j$th scaled Gauss-Legendre weight on
panel~$i$.  Therefore, for non-adjacent panels, the order of
convergence
is expected to be~$2p-1$, although rigorous analysis and estimates on
the number of digits obtained requires knowledge of the
regularity of the density function~$\psi$.

On the other hand, along adjacent panels and
self-interaction panels, a pre-computed generalized Gaussian
quadrature is applied~\cite{BGR2010}.
More specifically, to compute the integral over
panel~$\gamma_i$ at a target~$\bx_{ln}$ on the same (or adjacent) panel,
we approximate~$\mathcal S \psi(\bx_{ln})$ as
\begin{equation}\label{eq_nyslike}
  \begin{aligned}
    \mathcal S \psi(\bx_{ln}) &= \int_{\gamma_i}
    g(\bx_{ln},\by) \, \psi(\by) \, ds(\by) \\
    &\approx \sum_{j=1}^{p} c_{ij} \int_{\gamma_i} g(\bx_{ln},\by)
    \, P^i_j(\by) \, ds(\by) \\
    &= \sum_{j=1}^{p}c_{ij} \, I_j(\bx_{ln}),
\end{aligned}
\end{equation}
where~$P^i_j$ is the scaled Legendre polynomial of degree~$j-1$ on
panel~$\gamma_i$, the numbers~$c_{ij}$ are the Legendre expansion coefficients
of the degree~$j-1$ interpolating polynomial for~$\psi$,
and~$I_j(\bx_{ln})$ is merely a number, given by:
\begin{equation}
I_j(\bx_{ln}) =  \int_{\gamma_i} g(\bx_{ln},\by) \, P^i_j(\by) \, ds(\by).
\end{equation}
For a
fixed~$\bx_{ln}$, each $I_j(\bx_{ln})$ contains no unknowns and can be
evaluated via a pre-computed high-accuracy generalized Gaussian
quadrature.  The number of nodes required in these quadratures may
vary, and details regarding the construction of these quadratures via
a nonlinear optimization procedure are discussed
in~\cite{BGR2010}. An analogous Nystr\"om-like discretization
scheme along surfaces in three dimensions is discussed
in~\cite{Bremer2012}. Fortran code for computing these generalized
Gaussian quadrature rules is  available
at~\mbox{\url{github.com/JamesCBremerJr/GGQ}}. In our numerical examples, we
use $16$th-order generalized Gaussian rules which contain~$16$ support
nodes and weights for self-interacting panels (which vary according to
the location of the target) and~$48$ support nodes and weights on
adjacent panels (which are target independent).

\begin{remark}
We would like to point out that this type of quadrature, generalized
Gaussian quadratures, often do not exhibit convergence in the classical sense;
similar to classical Gaussian quadratures for polynomials,
they are accurate to machine precision when applied to a fixed
set of functions. When discretizing to high-order, often
machine precision (or maximum precision, up to conditioning) is
obtained before any convergence study can be carried out (unless
extended precision calculations are used).
In our case, for example, the $k$th-order quadratures for
self-interacting panels are exact for integrals of the form
\[
I(x) =   \int_{-1}^1 \left( p(y) \log|x-y| + q(y) \right) \, dy,
\]
where~$x \in (-1,1)$ is a root of the $k$th-degree Legendre polynomial
and $p$, $q$ are any polynomials of degree less
than~$k$. See~\cite{BGR2010,yarvin1998}
for a discussion regarding the construction of these quadratures.
\end{remark}

Continuing, note that each $c_{ij}$ can be obtained via application
of a~$p\times p$ transform matrix acting on values of~$\psi$. Denote
this transform matrix as~$\matrixsym{U}^{i}$, and its entries
as~$U^{i}_{jk}$. Inserting this into~\eqref{eq_nyslike}, we have
\begin{equation}\label{eq_nyslike2}
  \begin{aligned}
    \mathcal S \psi(\bx_{ln}) &\approx   \sum_{j=1}^{p} c_{ij} \,
    I_j(\bx_{ln}) \\
    &=  \sum_{j=1}^{p}   \sum_{k=1}^p U^{i}_{jk} \, \psi(\bx_{ik})
    \, I_j(\bx_{ln}) \\
    &=    \sum_{k=1}^p  \lp \sum_{j=1}^{p} U^{i}_{jk} \, I_j(\bx_{ln})
    \rp \psi(\bx_{ik}) \\
    &=    \sum_{k=1}^p  Q(\bx_{ln}, \bx_{ik} ) \, \psi(\bx_{ik}).
\end{aligned}
\end{equation}
The above formula provides the expression for the quadrature
kernel~$Q$ in this case.

In the case that~$\gamma$ is only piecewise smooth, a graded mesh near
the corners is used to maintain high accuracy. After uniform
discretization of each smooth component of~$\gamma$, we perform a
dyadic refinement on panels that impinge on each corner
point. Unless \emph{very} specialized quadrature and discretization
schemes are used~\cite{serkh2016}, adaptive refinement is needed in geometries
with edges and corners in order to resolve both the numerical
evaluation of the integral operator as well as to resolve the solution
to the integral equation. Integral operators with logarithmic
singularities cease to be compact on Lipschitz domains, but are still
bounded operators on $L^2 \to L^2$~\cite{Brem2012,costabel1988}.
Dyadic refinement is therefore an
appropriate discretization scheme to approximate these integrals and
functions. We then
apply the $p$th-order generalized Gaussian quadrature on each of the
refined panels. This procedure, along with proper quadrature
weighting, has been shown to obtain very high
accuracy results~\cite{Brem2012}.

\section{Numerical Examples}
\label{sec_numeri}

In this section, we apply the discretization and quadrature technique
of the previous section to the separation of variables formulation of
M\"uller's integral equation to compute electromagnetic
scattering from various penetrable axisymmetric objects.
In order to verify the
accuracy of the solver, we choose to test the extinction theorem by
solving the indirect M\"uller formulation~\eqref{dualmullrep} with an
artificial solution. Specifically, we define the field in~$\Omega$
to be generated by a current loop located in~$\Omega_0$:
\begin{equation}\label{eq:loop}
  \begin{aligned}
    \besci &= \nabla\times \int_\ell G^{k_1}(\cdot,\by) \,
        d\vct{l}(\by), \\
    \bhsci &= \frac{1}{i\omega\mu_1}\nabla\times\nabla\times
    \int_\ell G^{k_1}(\cdot,\by) \,
        d\vct{l}(\by),
  \end{aligned}
\end{equation}
where~$\ell$ is a small loop centered at~$(x_0,y_0,z_0)$ and located
in the plane~$z=z_0$. Throughout all the numerical examples, we let
the center of the loop~$\ell$ be $(0.4,\, 0.5,\, 5.0)$ with
radius~$0.42$. The field in~$\Omega_0$ is simply
set to be zero, i.e.  $\besce = 0$, $\bhsce = 0$. These known 
fields are used to generate the boundary data for the TBVP.
Since both fields satisfy Maxwell equations in~$\Omega$
and~$\Omega_0$ with wavenumber $k_1$ and $k_0$, respectively, by
uniqueness,  solving
equation~\eqref{dualmullrep} will result in functions~$\bJ$ and~$\bM$
that can be used to reconstruct the known fields.

To compute tangential field traces from an actual scattering problem,
we solve M\"uller's formulation~\eqref{mullerrep} with an incident
plane wave:
\begin{equation}\label{eq:pw}
  \begin{aligned}
    \beinc(\bx) &= (\vct{d} \times \vct{p} )\times \vct{d}
    \exp(ik_0\vct{d}\cdot \bx), \\
    \bhinc(\bx) &= \vct{d} \times \vct{p} \exp(ik_0\vct{d}\cdot \bx),
\end{aligned}
\end{equation}
where~$\vct{d}$ is the propagation direction and $\vct{d}\times \vct{p}$ is an
orthogonal polarization vector; both are unit vectors~\cite{jackson1999}.
Unless specifically given, throughout all the examples, we let
\begin{equation}\label{incidentdirection}
  \begin{aligned}
  \vct{d} &= \lp \cos\theta_1\sin\phi_1,\, \sin\theta_1\sin\phi_1,
  \, \cos\phi_1 \rp,\\
  \vct{p} &= \lp \cos\theta_2\sin\phi_2, \,  \sin\theta_2\sin\phi_2,
  \, \cos\phi_2 \rp, 
\end{aligned}
\end{equation}
with $\theta_1 = \pi/3$, $\phi_1 = 2\pi/3$, $\theta_2=\pi/2$, and
$\phi_2 = \pi/3$. The far field pattern of the scattered wave can be
found by letting~$|\bx|\rightarrow\infty$ in~\eqref{exteriorfield} and
using the asymptotic form of the Green's function~\eqref{greenfun}:
\begin{multline}
  \label{farfieldform}
  |\besc_{\infty}(\theta,\phi)| = \frac{1}{4\pi} \left|
    \frac{1}{i\omega\varepsilon_0}
    \lp k_0^2\int_\Gamma e^{-ik_0\,
      {\vectorsym{x}} \cdot \by} \, \bJ(\by) \, ds(\by)
  \right. \right. \\
\left. \left. - \int_\Gamma \nabla_{\by}e^{-ik_0 \, 
      {\vectorsym{x}} \cdot \by }
    \, \nabla_\Gamma \cdot\bJ(\by) \, ds(\by) \rp 
             + \int_\Gamma  \nabla_{\by}e^{-ik_0
              {\vct{x}} \cdot \by }\times \bJ(\by) \, ds(\by) \right|,  
\end{multline}
where~$\theta\in[0,2\pi]$ is the azimuthal angle,~$\phi\in[0,\pi]$
is the angle with respect to the positive~$z$ axis, and
${\vectorsym{x}}=(\cos\theta\sin\phi,\sin\theta\sin\phi,\cos\phi)$
is a point on the unit sphere. 
The gradient operator with respect to $\by$ is denoted~$\nabla_{\by}$,
and $\nabla_\Gamma \cdot $ is the surface divergence operator
along~$\Gamma$. The norm of~$\bhsc_\infty$ is the same as that of~$\besc_\infty$.

The accuracy that controls the kernel evaluation (i.e. where to
truncate Fourier coefficients in the discrete convolutions) and the
number of Fourier modes in the decomposition of the incident wave is
set to be~$10^{-12}$. We apply the 16th-order Nystr\"om-like
discretization described earlier in Section~\ref{ggq} to each modal
integral equation along the generating curve. All experiments were
implemented in~\textsc{Fortran 90} and carried out on an HP
workstation with twenty 2.7Ghz Intel cores and 128Gb of RAM. We made
use of~\textsc{OpenMP} for parallelism across decoupled Fourier modes;
linear systems were solved via $\mtx{LU}$-factorization using a
standard~\textsc{LAPACK} library and the code was compiled using
the~\textsc{GCC}~\textsc{Fortran} compiler. Various fast direct
solvers such as \cite{GYM2012,HG2012,Liu2015,JL2014} could be applied
if larger problems were involved, but our examples did not warrant
such methods.

\begin{remark}
  Most of the following numerical experiments show accuracies of
  approximately $10^{-10}$ in relative precision. This is to be
  expected: the modal Green's functions are computed to 12 digits of
  relative accuracy (as mentioned above), and the data is resolved to
  an $L^2$ norm of $10^{-12}$. The remaining loss of precision arise
  from the inherent condition number of the problem, which remains
  very small at modestly sized frequencies because of the second-kind
  integral equation formulation. In addition, we apply~$L^2$
  weighting, as described in~\cite{Brem2012}, to the matrix elements
  in order to handle any non-physical ill-conditioning due to dyadic
  refinement along the generating curve. When~$k_0=10$ and~$k_1=5$,
  the approximate condition number of the 0th-mode system matrices for
  examples with smooth geometries in Section~\ref{sec:ex1}
  and~\ref{sec:ex2} is 4.93E3.  The approximate condition number of
  the 0th-mode system matrix in Section~\ref{sec:ex3} is 1.35E3 and in
  Section~\ref{sec:ex4} is 1.43E4 (obtained by the
  function~\texttt{ZGESVX} in~\textsc{LAPACK}).
\end{remark}

\begin{remark} It is generally difficult to analytically parameterize the
generating curve with respect to arclength. In the following examples,
instead of resampling the curve to obtain an arclength discretization,
the generating curve is sampled at Legendre nodes in~\emph{some}
parameter~$t$, not necessarily arclength. Any previous formulae based
on an arclength parameterization
can be easily adjusted with factors of~$ds/dt$ to
account for the change of variables.
\end{remark}

\begin{remark}
  The examples we tested here are all for the scattering of a single
  object. The algorithm, however, can be extended to the scattering of
  multiple axisymmetric objects by transforming the incident field
  into the local coordinate of each object\cite{GG2013,hao_2014}. This
  will be considered  in future work.
\end{remark}

We make use of the following notation in the subsequent tables that
present data from our scattering experiments:
\begin{itemize}
\item $k_0$: the exterior wavenumber,
\item $k_1$: the interior wavenumber,
\item $N_f$: the number of Fourier modes in the azimuthal direction
  used to resolve the solution. In other words, the Fourier modes are
  $-N_f,-N_f+1, \dots, N_f-1,N_f$.
\item $N_{pts}$: the total number of points used to
  discretize~$\gamma$,
\item $T_{kernel}$: the time (seconds) to evaluate all the relevant modal Green's functions,
\item $T_{matgen}$: the time (seconds) to construct the relevant
  matrix entries for all integral equations,
\item $T_{solve}$: the time (seconds) to solve the linear system by 
  $\mtx{LU}$-factorization for all modes,
\item $T_{add}$: the time (seconds) to solve with an additional
  right-hand side once the matrix is factorized,
\item $E_{error}$: the relative $\ell^2$ error of the electric and magnetic
  fields measured at a few points randomly placed inside~$\Omega$.
\end{itemize}

\subsection{Example 1: Scattering from a Torus}
\label{sec:ex1}
 
Consider a torus with the generating curve given by
\begin{equation}
  \begin{aligned}
    r(t) &= 2+\cos(t), \\
    z(t) &= 0.5\sin(t),
  \end{aligned}
\end{equation}
for~$t\in [0,2\pi)$. The accuracy of the integral equation solver was
tested in this geometry using the extinction theorem (described above
in Section~\ref{sec_numeri}) with known boundary data and fields given
in equation~\eqref{eq:loop}. The accuracy of the solver was verified
at several wavenumbers, and results are shown in Table~\ref{table_1}.
Using around 25 points per wavelength, approximately $8$ digits of
relative accuracy in the fields were obtained for most of the cases.
Note that the CPU time is dominated by the formation of the system
matrix, which (as expected) roughly scales quadratically with the
number of unknowns. For the same number of unknowns, the computational
time depends linearly on the number of Fourier modes. Although the
linear systems are decoupled across modes (this makes parallelization
straightforward), we present the computational time~$T_{solve}$ in
Table~\ref{table_1} as the total matrix inversion time via sequential
solve.  Despite its~$\mathcal{O}(N_fN_{pts}^3)$ complexity, it is
still much smaller than~$T_{matgen}$.  Once the matrix is factorized,
additional solves for new right hand sides are very fast.

For an incident plane wave, given in equation~\eqref{eq:pw},
Figure~\ref{figure_torus} shows the real~part of the $x$-component of
the electric current~$\bJ$, which is $\Re(\bn\times \bh)_x$ by~\eqref{emcurrent},
and the far field pattern at $\phi = \frac{\pi}{2}$ and
$\theta=0$, respectively. The exterior wavenumber is set to
be~$k_0 = 10.0$ and the interior is set to be~$k_1=5.0$.  When
$\phi = \pi/2$, we see a maximum at $\theta = \pi/3$ and a minimum
near~$\theta = 4\pi/3$. This is due to specular reflection.

Since the exact solution to the true scattering problem is not known,
in order to verify the accuracy of our solver
we perform a self-consistent convergence study
on the far field pattern. The error is obtained by comparing with the
result obtained using~$20$ panels with~$k_0=10$ and~$k_1=5$, and is measured
in the~$L^2$ norm. Figure~\ref{fig:conv1} shows that the far
field converges exponentially fast when the number of panels increases
by~2; we scanned through discretizations with 4 panels to 18 panels.

\subsection{Example 2: Scattering from a Rotated Starfish}
\label{sec:ex2}

For the second example, we consider an axisymmetric object with
generating curve
\begin{equation}
  \begin{aligned}
    r(t) &= [2+0.5\cos(5\pi (t-1))]\, \cos(\pi(t-0.5)),  \\
    z(t) &= [2+0.5\cos(5\pi (t-1))]\, \sin(\pi(t-0.5)),
  \end{aligned} 
\end{equation}
for $t\in [0,1]$. We refer to this object as the rotated
\emph{starfish}, as shown in Figure~\ref{figure_wigg}.
As before, 
the accuracy of the integral equation solver was tested in this
geometry using the extinction theorem (described in
Section~\ref{sec_numeri}) with known boundary data and fields given in
equation~\eqref{eq:loop}. The accuracy of the solver was verified at
several wavenumbers, and results are shown in Table~\ref{table_2}.

\afterpage{
 \begin{figure}[h]
   \centering
   \begin{subfigure}[b]{.38\linewidth}
     \centering
     \includegraphics[width=.95\linewidth]{./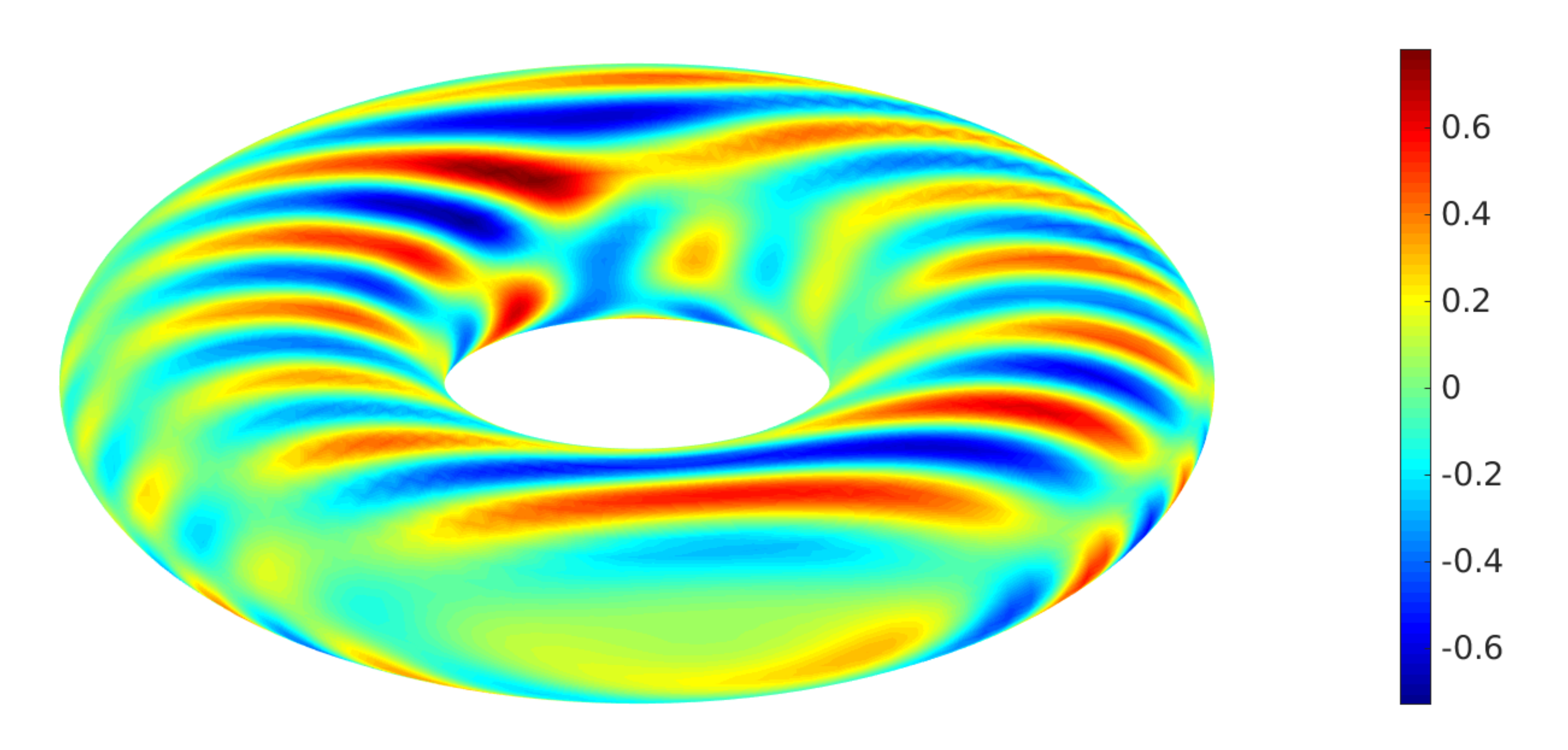}
     \caption{$\Re J_x$.}
   \end{subfigure}
   \hfill
   \begin{subfigure}[b]{.3\linewidth}
     \centering
     \includegraphics[width=.95\linewidth]{./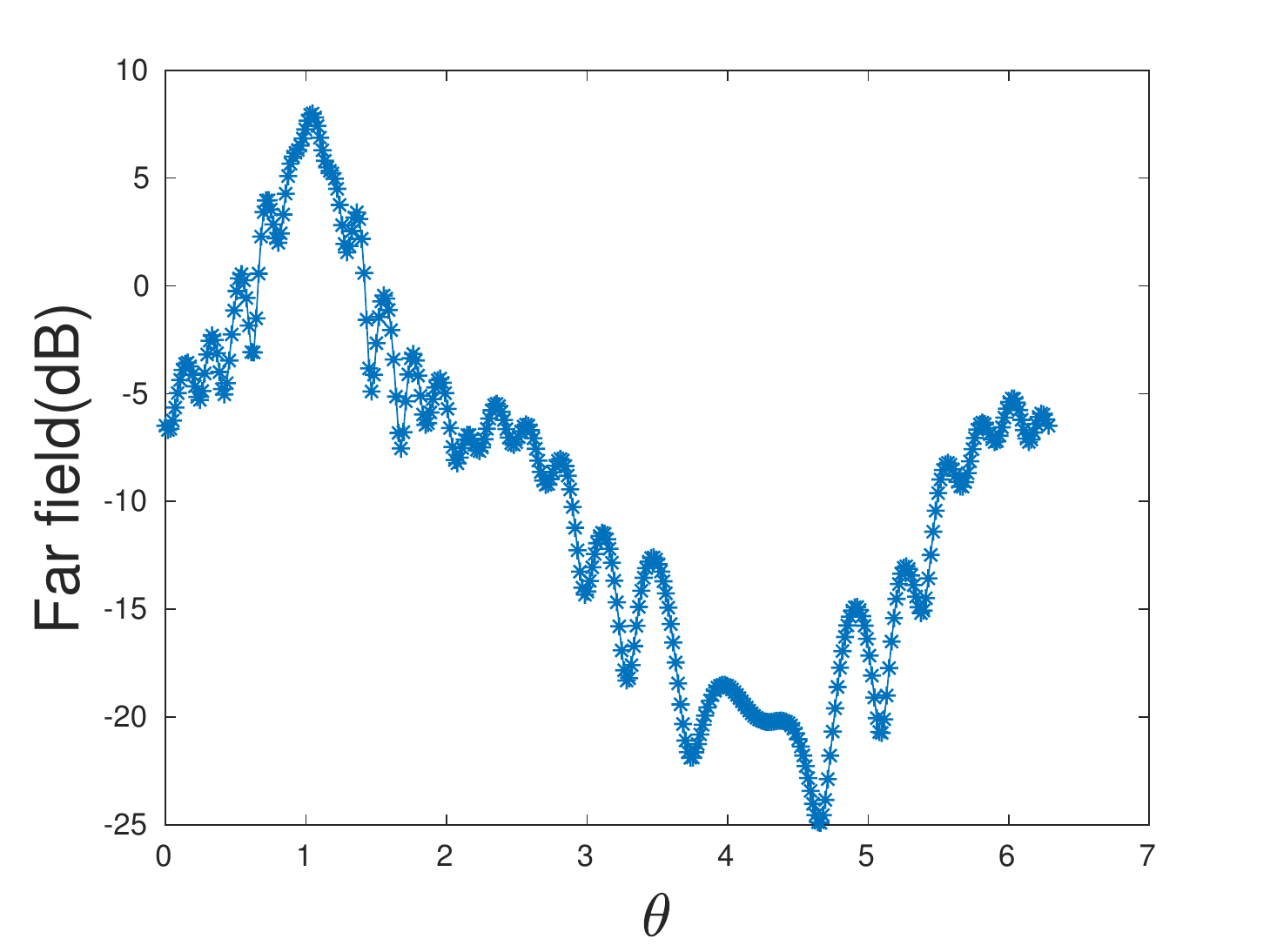}
     \caption{$|\besc_\infty(\cdot,\pi/2)|$.}
   \end{subfigure}
   \hfill
   \begin{subfigure}[b]{.3\linewidth}
     \centering
     \includegraphics[width=.95\linewidth]{./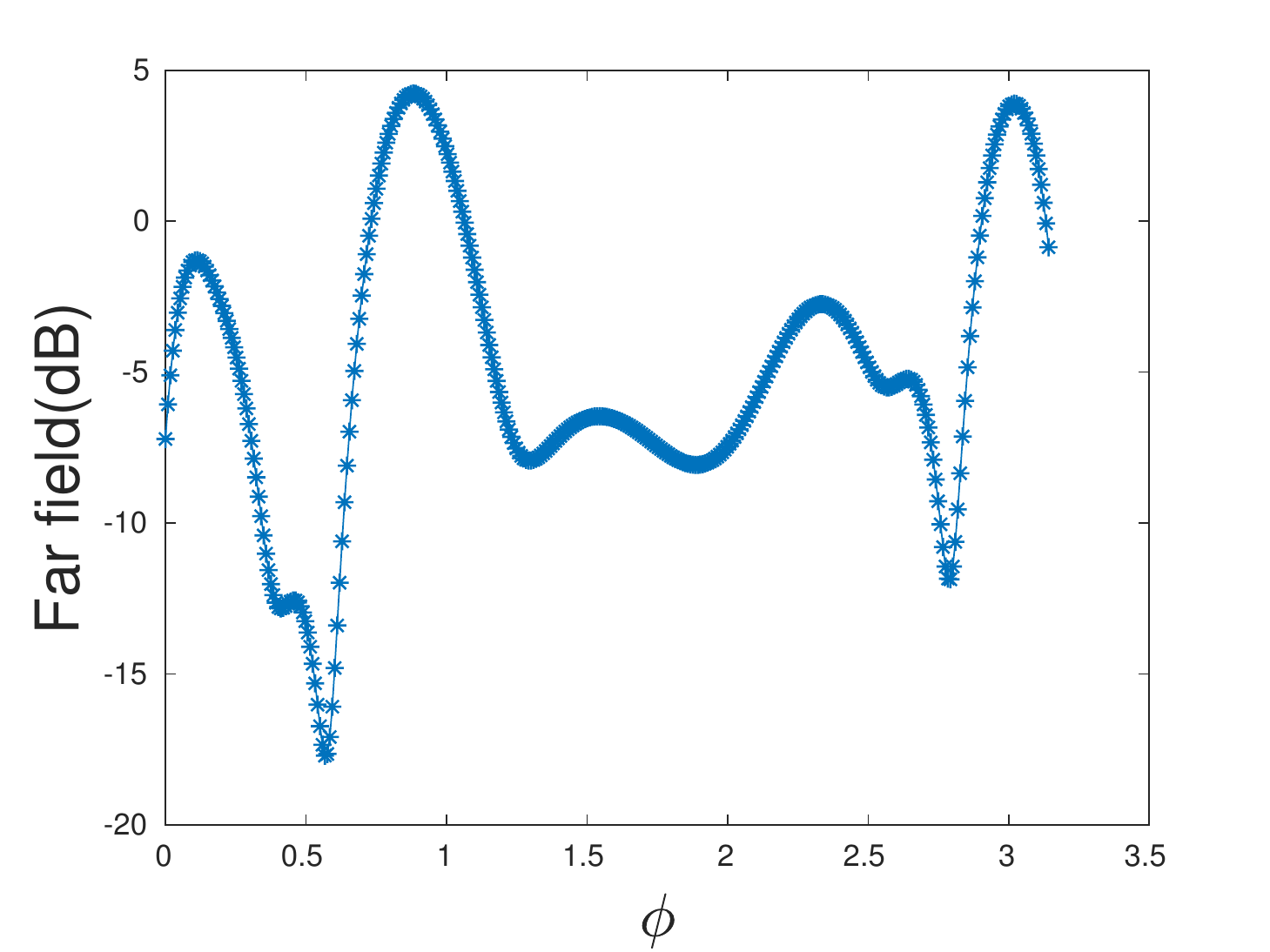}
     \caption{$|\besc_\infty(0,\cdot)|$.}
   \end{subfigure}
   \caption{A penetrable torus with interior~$k_1=5.0$ and
     background~$k_0=10.0$.}
   \label{figure_torus}
 \end{figure}

\begin{figure}[h]
  \centering
  \begin{subfigure}[b]{.38\linewidth}
    \centering
    \includegraphics[width=.95\linewidth]{./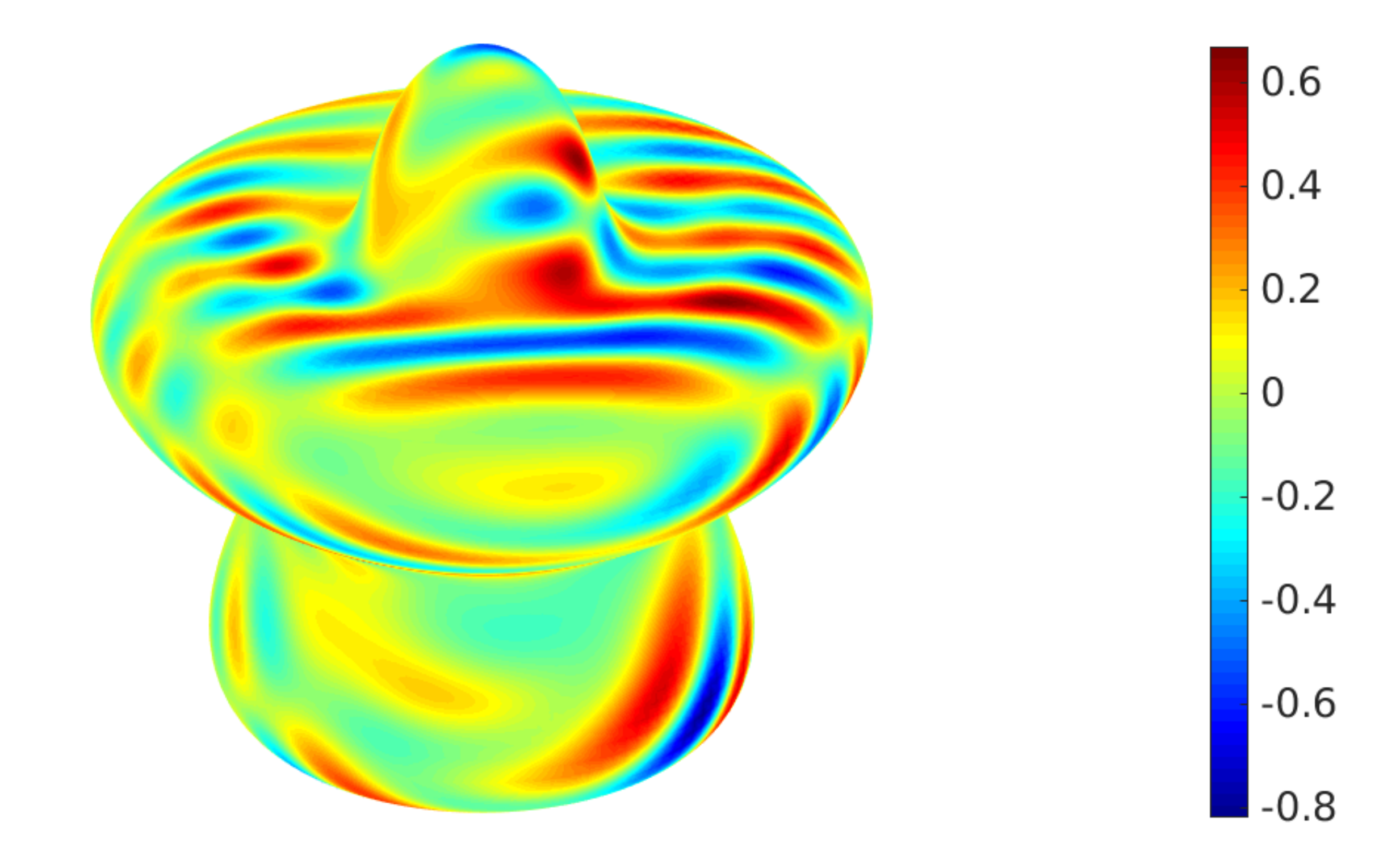}
    \caption{$\Re J_x$.}
  \end{subfigure}
  \hfill
  \begin{subfigure}[b]{.3\linewidth}
    \centering
    \includegraphics[width=.95\linewidth]{./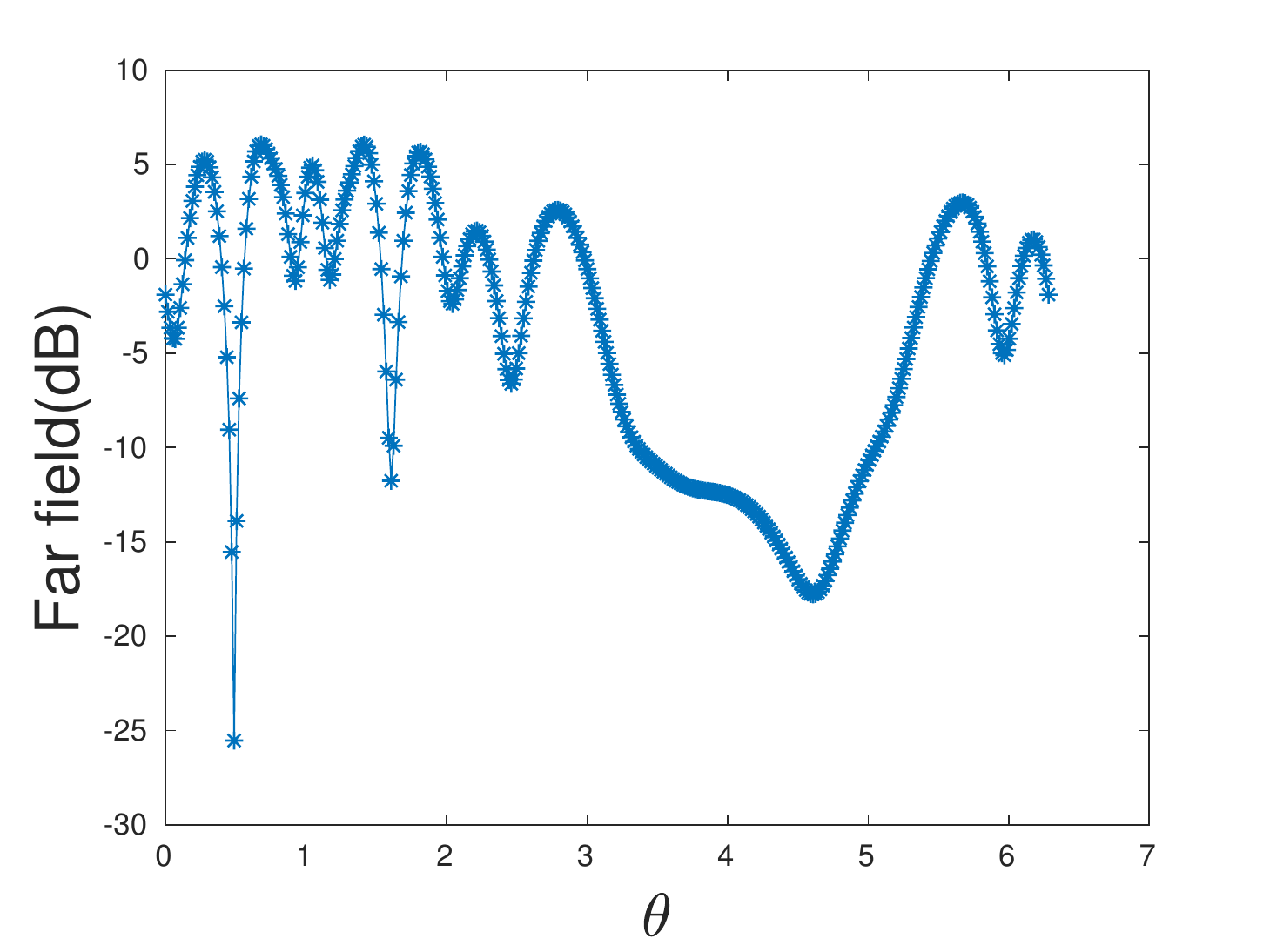}
    \caption{$|\besc_\infty(\cdot,\pi/2)|$.}
  \end{subfigure}
  \hfill
  \begin{subfigure}[b]{.3\linewidth}
    \centering
    \includegraphics[width=.95\linewidth]{./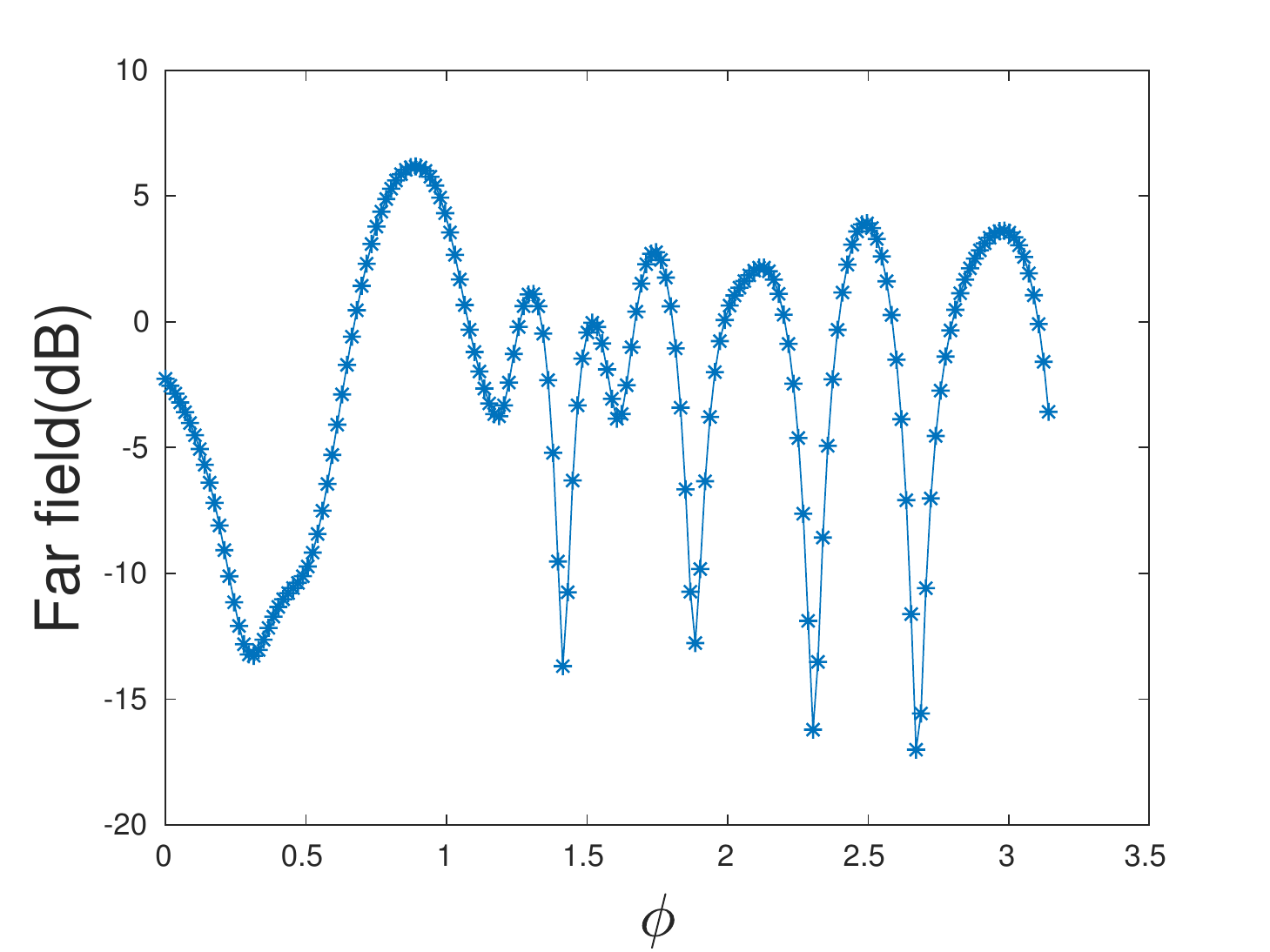}
    \caption{$|\besc_\infty(0,\cdot)|$.}
  \end{subfigure}
  \caption{A penetrable star-fish~$\Omega$  with interior~$k_1=5.0$
    and background~$k_0=10.0$.}
  \label{figure_wigg}
\end{figure}

\begin{figure}[h]
  \centering
  \begin{subfigure}[b]{.38\linewidth}
    \centering
    \includegraphics[width=.95\linewidth]{./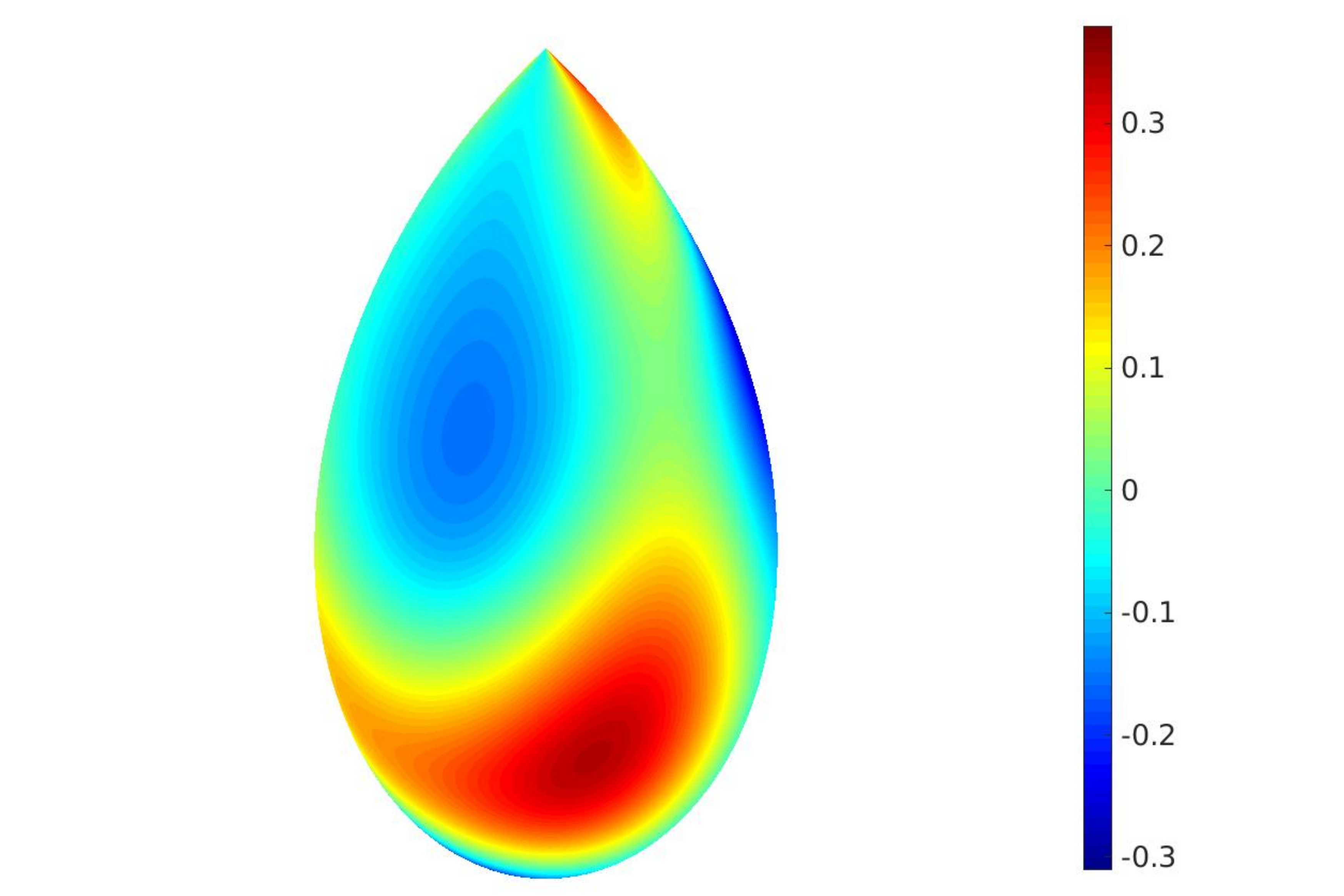}
    \caption{$\Re J_x$.}
  \end{subfigure}
  \hfill
  \begin{subfigure}[b]{.3\linewidth}
    \centering
    \includegraphics[width=.95\linewidth]{./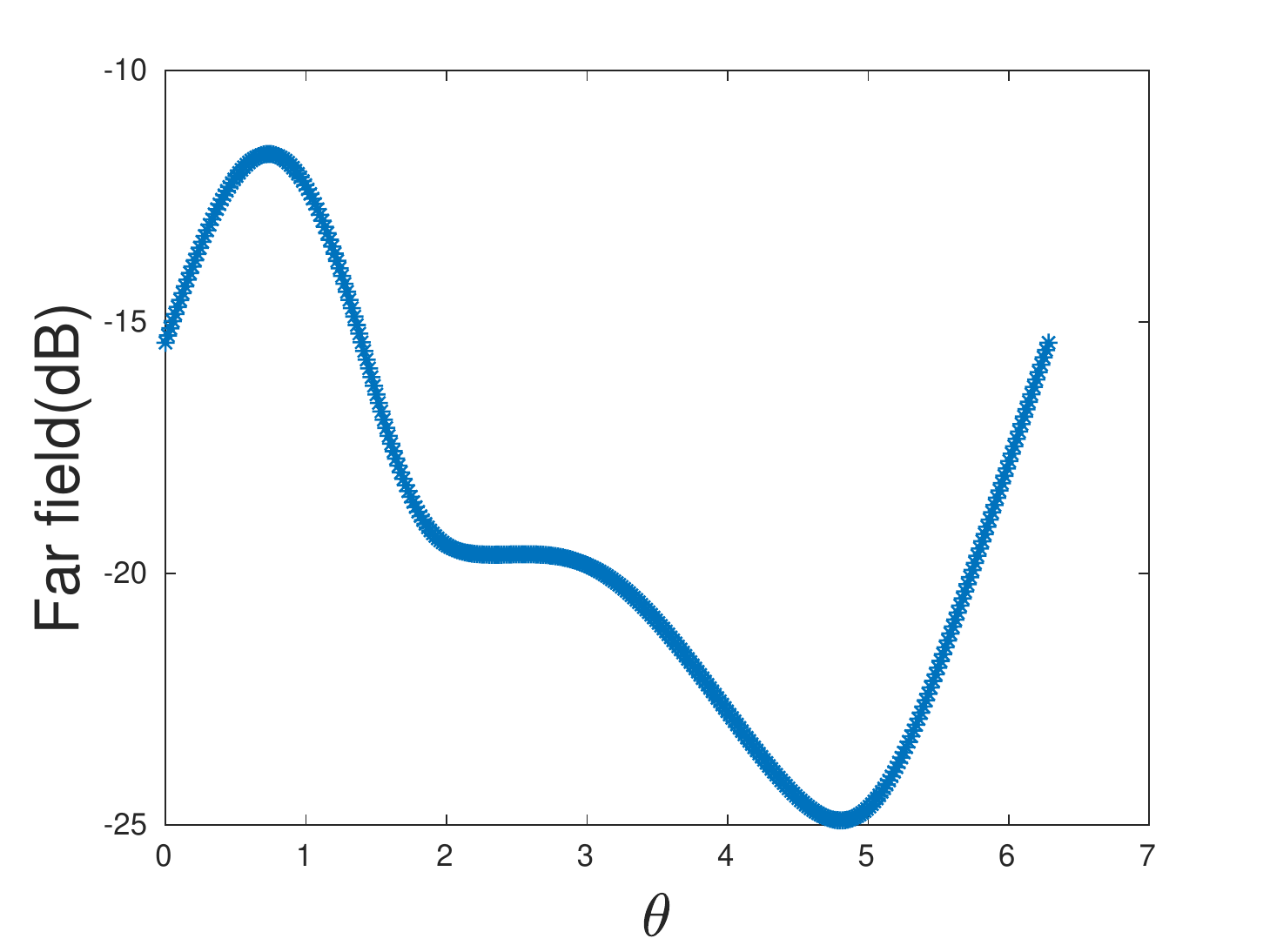}
    \caption{$|\besc_\infty(\cdot,\pi/2)|$.}
  \end{subfigure}
  \hfill
  \begin{subfigure}[b]{.3\linewidth}
    \centering
    \includegraphics[width=.95\linewidth]{./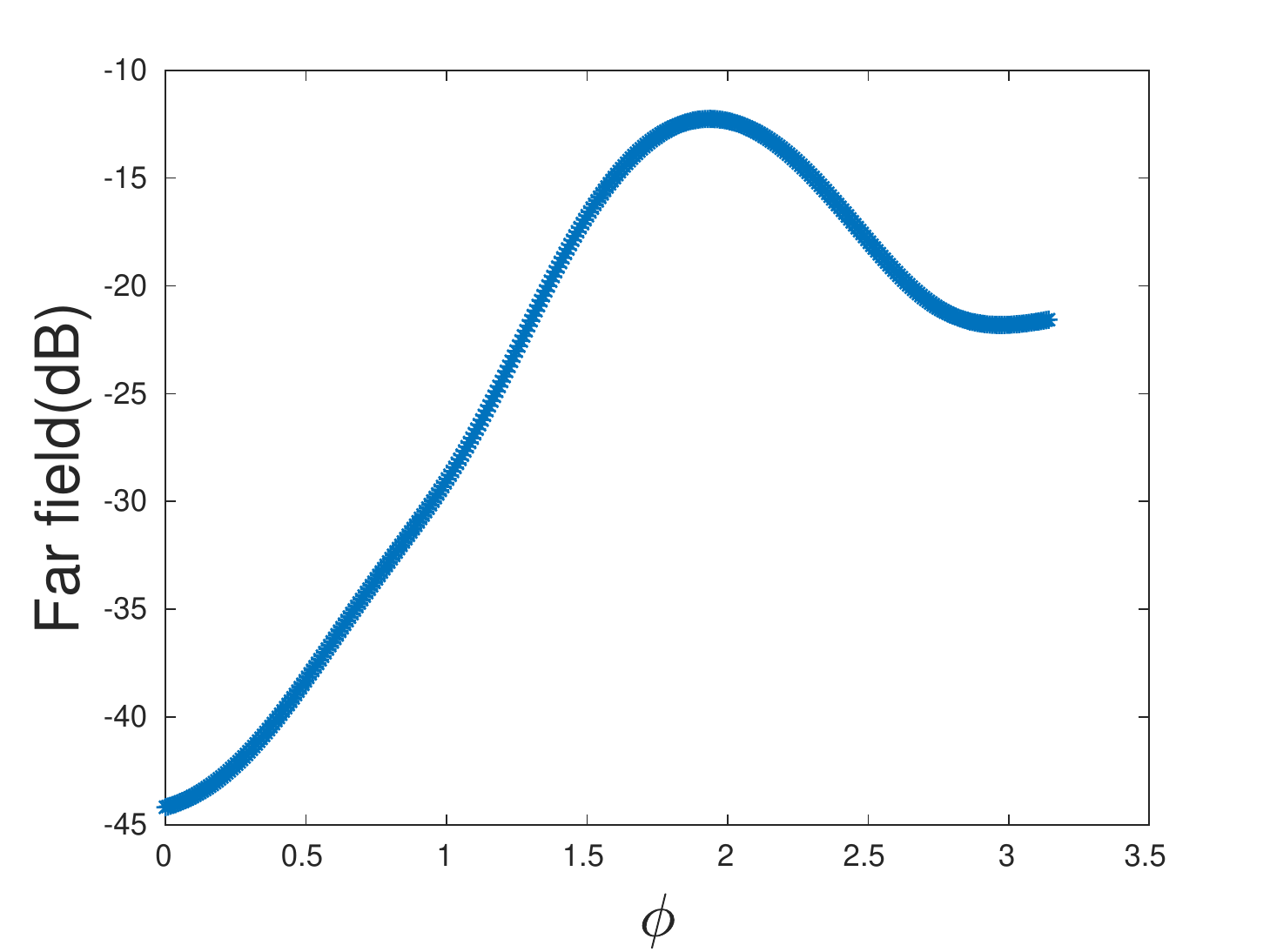}
    \caption{$|\besc_\infty(0,\cdot)|$.}
  \end{subfigure}
  \caption{A penetrable droplet~$\Omega$  with interior~$k_1=5.0$
    and background~$k_0=10.0$.}
  \label{figure_drop}
\end{figure}
\clearpage}

Although the generating curve~$\gamma$ is open (when viewed
as~$\gamma \subset \mathbb R^2$), the object is globally smooth. We
therefore apply a uniform panel discretization in the parameter
space~$[0,1]$. Table~\ref{table_2} provides the accuracy results at
various wavenumbers and discretization refinements.  We easily
obtain~$8$ to~$9$ digits of accuracy by using a sufficient number of
discretization points per wavelength. The computational time is again
dominated by the matrix generation. Once the matrix is generated and
factored, the time for additional solves is negligible.

Figure~\ref{figure_wigg} shows the scattering behavior for an incident
plane wave with~$k_1=5$ and~$k_0 = 10$. The far field pattern
oscillates around~$\theta=\pi/3$. This effect is a combination of
specular reflection and the changing convexity of the geometry. The
far field is notably small near~\mbox{$\theta=4\pi/3$}; this location
is in the shadow region with respect to the direction of the incident
wave.  Again, to verify the accuracy of our solver,
Figure~\ref{fig:conv2} provides the results of a self-consistent
convergence test of the far field pattern.  We obtain exponential
convergence when the number of panels is increased by 2, as before.

\begin{figure}[t]
  \centering
  \begin{subfigure}[b]{.32\linewidth}
    \centering
    \includegraphics[width=.95\linewidth]{./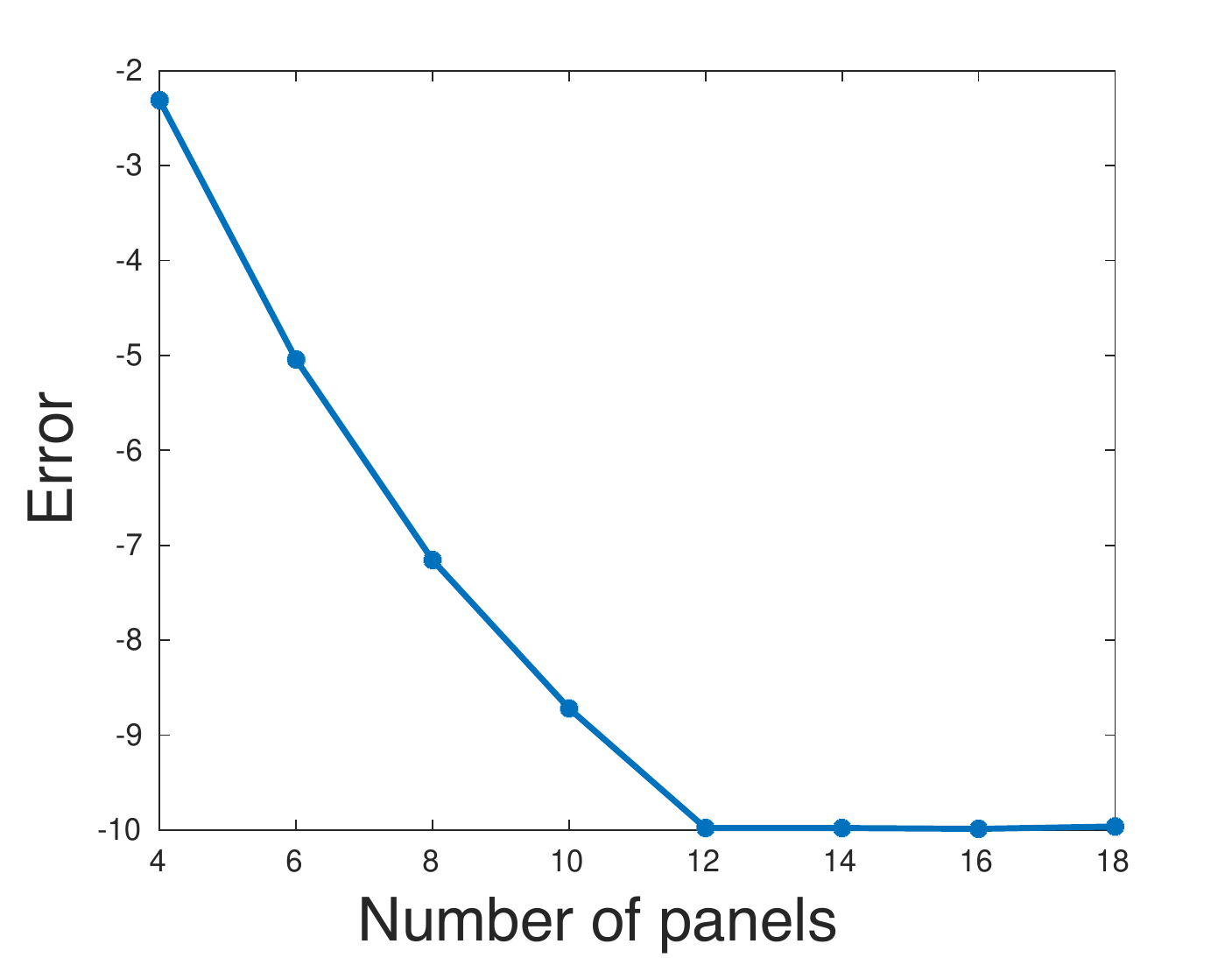}
    \caption{Ex. 1: The torus.}
    \label{fig:conv1}
  \end{subfigure}
  \hfill
  \begin{subfigure}[b]{.32\linewidth}
    \centering
    \includegraphics[width=.95\linewidth]{./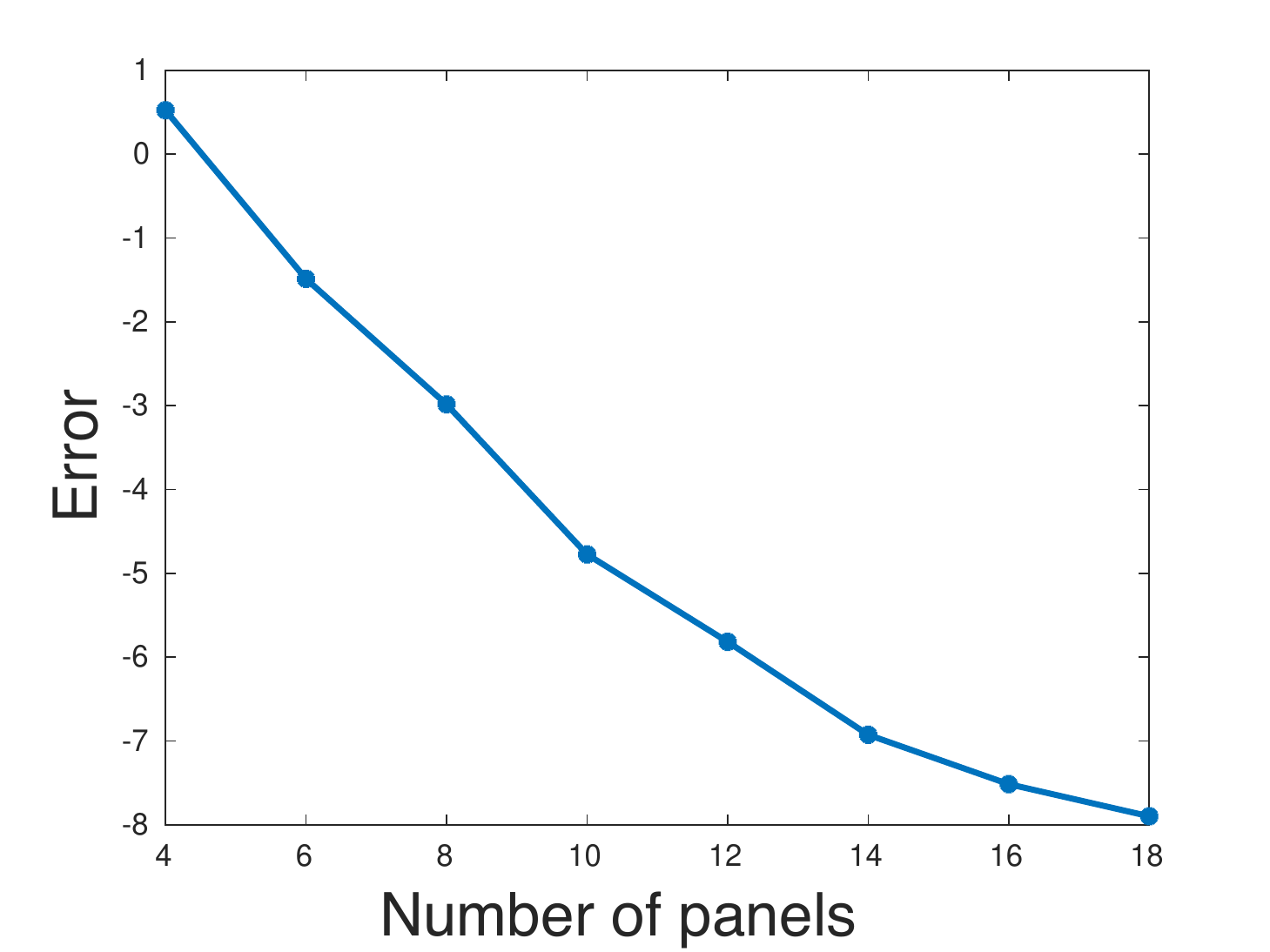}
    \caption{Ex. 2: The rotated starfish.}
    \label{fig:conv2}
  \end{subfigure}
  \hfill
  \begin{subfigure}[b]{.32\linewidth}
    \centering
    \includegraphics[width=.95\linewidth]{./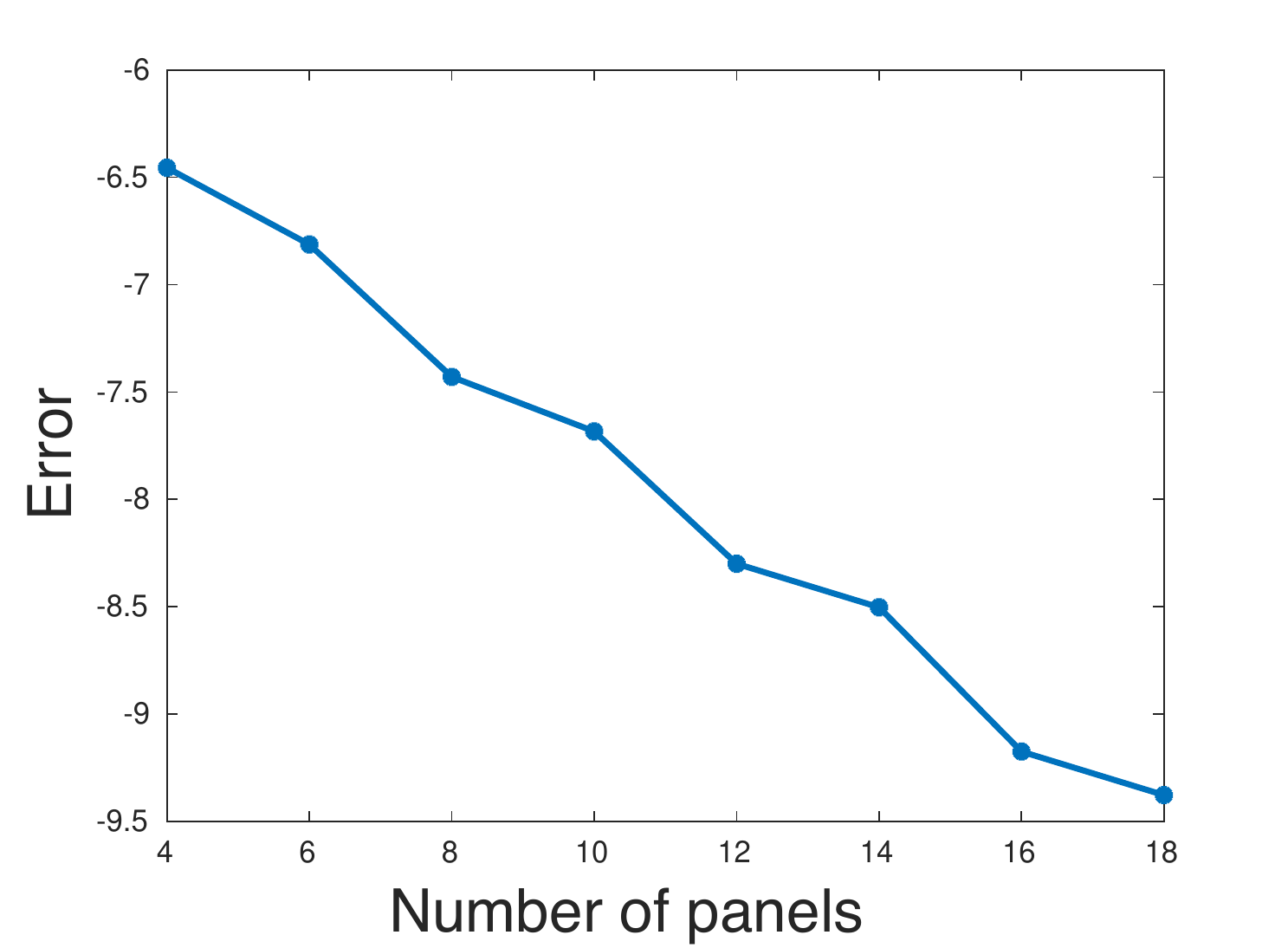}
    \caption{Ex. 3: The droplet.}
    \label{fig:conv3}
  \end{subfigure}
  \caption{A self-consistent convergence study on the far field
    pattern of three geometries with interior $k_1=5.0$ and
    background~$k_0=10.0$. The error of the far field is given on a
    $\log_{10}$-scale.}
  \label{figure_convergence}
\end{figure}

\subsection{Example 3: Scattering from a Droplet}
\label{sec:ex3}

In this example, we consider scattering from a droplet whose
generating curve is parameterized as:
\begin{equation}
  \begin{aligned}
    r(t) &= \sin(\pi t)\cos[0.5\pi(t-1.5)],  \\
    z(t) &= \sin(\pi t)\sin[0.5\pi(t-1.5)]+0.5,
  \end{aligned}
\end{equation}
for $t\in[0.5,1]$. As is clear in~Figure \ref{figure_drop}, there is a
point singularity at $t=1$ on the $z$-axis.
To resolve this singularity, we first compute
a uniform length panel discretization in the parameter space
$[0.5,1]$. Five dyadic refinements are then performed along the panel
adjacent to the end point, yielding a graded mesh.
As illustrated in Figure~\ref{fig:dyadic_droplet}, the last panel near the
end point is of size 8.18E-3 when~$k_1=5$ and~$k_0 = 10$. 

Once again, the accuracy of the integral equation solver was tested in this
geometry using the extinction theorem (described in
Section~\ref{sec_numeri}) with known boundary data and fields given in
equation~\eqref{eq:loop}. The accuracy of the solver was verified at
several wavenumbers, and results are shown in Table~\ref{table_3}.
More than 8 digits of accuracy was obtained in the tests
at various wavenumbers, which implies that the solution to the integral
equation near the point singularity of the droplet is also resolved to this
accuracy. 

\begin{figure}[t]
  \centering
  \begin{subfigure}[b]{.3\linewidth}
    \centering
    \includegraphics[width=.95\linewidth]{./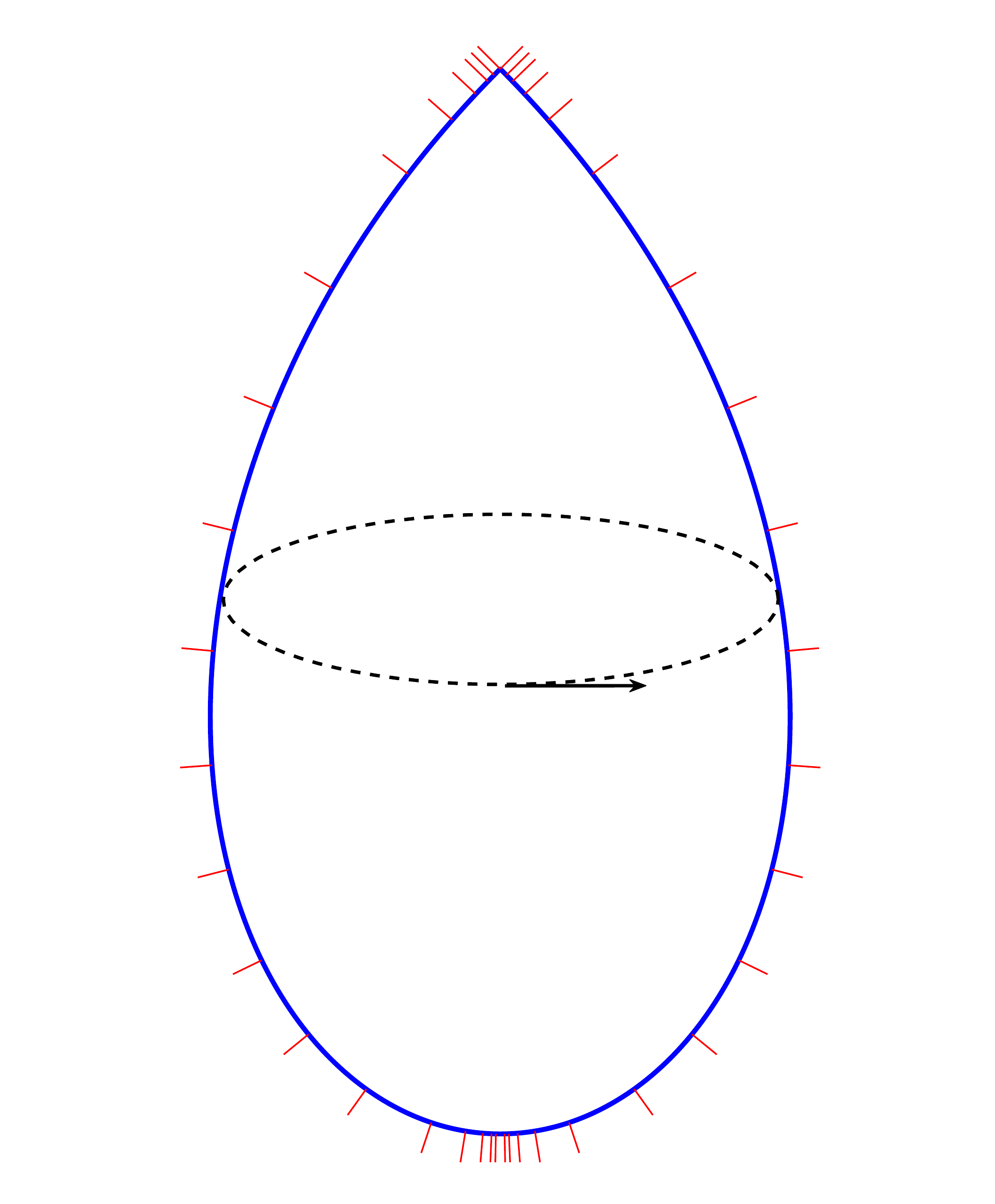}
    \caption{The droplet.}
    \label{fig:dyadic_droplet}
  \end{subfigure}
  \hspace{.5in}
  \begin{subfigure}[b]{.4\linewidth}
    \centering
    \includegraphics[width=.95\linewidth]{./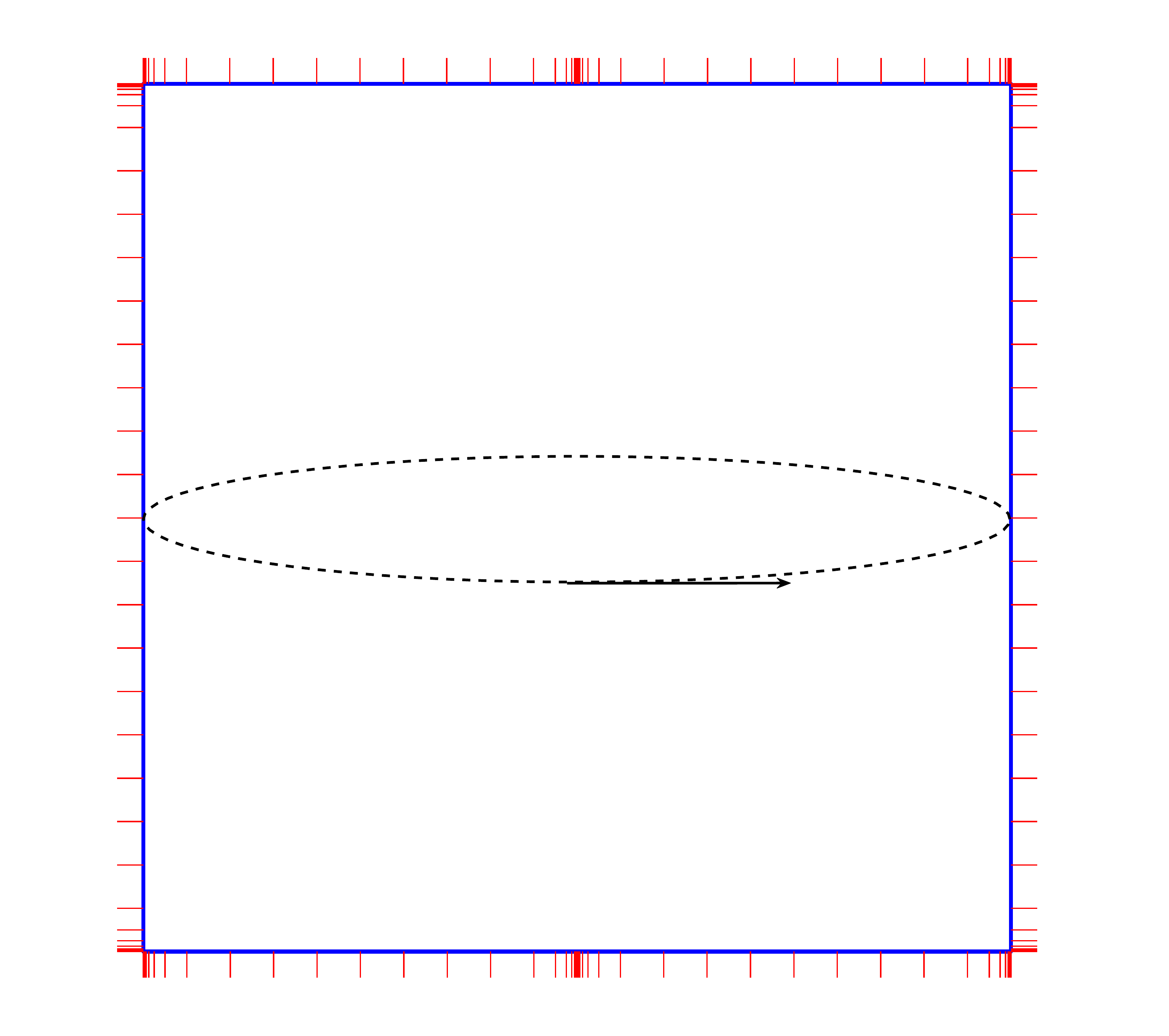}
    \caption{The cylinder.}
    \label{fig:dyadic_cylinder}
  \end{subfigure}
  \hfill
  \caption{The dyadic refinement on the generating curves for
    non-smooth geometries.}
  \label{figure_dyadic}
\end{figure}

Figure~\ref{figure_drop} then illustrates scattering of an incident
plane wave for~$k_1=5$ and~$k_0 = 10$. Unlike the previous two
examples, the far field pattern at $\phi = \pi/ 2$ and $\theta=0$ are
quite smooth due to the particular geometry and observation
angle. However, the far field pattern reaches a the minimum near
$\theta=4\pi/3$ due to the specular reflection.
Similarly, as before, self-consistent convergence results on the far field pattern
are reported in Figure~\ref{fig:conv3}.
The plot demonstrates exponential convergence when the
number of panels is increased by 2, starting at 4 panels.

\subsection{Example 4: Scattering from a Cylinder}
\label{sec:ex4}

Finally, we consider scattering from a cylindrical geometry whose
generating curve has vertices given by
\begin{equation}
  V = \left\{
    (0,-1), \, (1,-1), \, (1,1), \, (0,1)
  \right\},
\end{equation}  
see Figure~\ref{figure_cylinder}. Once again, the generating curve is
open but gives rise to a smooth surface when crossing
the~$z$-axis. However, the cylinder contains an edge at the top and
bottom which necessitates dyadic refinement to ensure accuracy.
 In particular, on the two
panels that are adjacent to the corner, dyadic refinement is performed
such that the size of the last panel is on the order of 1E-5,
as illustrated in Figure~\ref{fig:dyadic_cylinder}. This is
of course deep sub-wavelength, and is small enough to resolve the
solution to the integral equation to sufficiently high accuracy.

The accuracy of the integral equation solver was again tested in this
geometry using the extinction theorem (described in
Section~\ref{sec_numeri}) with known boundary data and fields given in
equation~\eqref{eq:loop}. The accuracy of the solver was verified at
several wavenumbers, and results are shown in Table~\ref{table_4}.
At small wavenumbers, we obtain approximately~8 digits of
accuracy; this accuracy slowly deteriorates as the wavenumber
increases.  This is due to a stronger singularity near the corner at
higher wavenumbers (since the characteristic length-scale of the
singularity is on the order of the wavelength).  More digits can be
obtained if additional refinement were implemented.

For an incident plane wave with~$k_0=10.0$ and~$k_1=5.0$,
Figure~\ref{figure_cylinder} plots the real-part of the
  $x$-component of $\bn\times \bh$  and the far field
pattern.  At~$\phi=0$ in
the far field pattern, we again see strong and weak scattering
near~$\theta=\pi/3$ and~$\theta = 4\pi/3$, respectively. This effect
is due to similar reasons as in the previous three examples.

\afterpage{
\begin{table}[t]
  \centering
  \caption{Accuracy results for scattering from objects in
    Figures~\ref{figure_torus}-\ref{figure_cylinder}.}
  \label{tab:tables}
  \begin{subtable}[t]{.48\textwidth}
    \centering
    \caption{The torus.}
    \label{table_1}
    \resizebox{\textwidth}{!} {
      \begin{tabular}{|c|c|c|c|c|c|c|c|c|}
        \hline
        $k_0$ &$k_1$ & $N_f$ & $N_{pts}$ &$T_{kernel}$& $T_{matgen}$ & $T_{solve}$ &$T_{add}$ & $E_{error}$  \\
        \hline
        $1$ & $2$ & 13 & 64 &2.76E+0 & 2.98E+0 & 6.48E-2 & 1.08E-2 & 2.32E-9\\
        $1$ &$5$ & 16 &160 &6.09E+0 & 7.95E+0 & 3.29E-1 & 2.64E-2 & 4.61E-9 \\
        $1$ &$10$ & 21 &320 &1.28E+1 & 2.51E+1 & 3.37E+0  & 8.59E-2 & 3.11E-8 \\
        \hline
        $5$ &$2$ & 13 &160 &6.13E+0 & 7.65E+0 & 3.56E-1 & 3.24E-2 & 1.01E-9 \\
        $5$ &$10$ & 21 & 320 &1.27E+1 & 2.49E+1 & 2.35E+0 & 8.59E-2 & 3.84E-9 \\
        $5$ &$20$ & 30 & 640 &6.12E+1 & 1.38E+2 & 2.53E+1 & 5.85E-1 & 4.63E-9 \\
        \hline
        $10$ &$5$ & 17 & 320 &1.32E+1 & 2.36E+1 & 1.97E+0 & 6.99E-2 & 1.61E-9 \\
        $10$ &$20$ & 30 & 640 &6.14E+1 & 1.37E+2 & 2.49E+1 & 5.36E-1 & 2.08E-9 \\
        $10$ &$40$ & 45 & 640 &9.14E+1 & 2.06E+2 & 3.49E+1 & 8.37E-1 &2.66E-8\\
        \hline
        $20$ &$5$ & 17 &320 &1.76E+1 & 2.77E+1 & 2.95E+0 & 9.8E-2 & 4.78E-9 \\
        $20$ &$10$ & 22 & 640 &6.09E+1 & 1.17E+2 & 1.83E+1 & 4.14E-1 & 3.37E-9 \\
        $20$ &$40$ & 45 & 640 &9.64E+1 & 2.11E+2 & 3.30E+1 & 8.37E-1 & 1.10E-8 \\
        \hline
      \end{tabular}
    }
  \end{subtable}
  \hfill
  \begin{subtable}[t]{.48\textwidth}
    \centering
    \caption{The rotated starfish.}
    \label{table_2}
    \resizebox{\textwidth}{!} {
      \begin{tabular}{|c|c|c|c|c|c|c|c|c|}
        \hline
        $k_0$ &$k_1$ & $N_f$ & $N_{pts}$ & $T_{kernel}$& $T_{matgen}$ & $T_{solve}$ &$T_{add}$ & $E_{error}$  \\
        \hline
        $5$ & $2$ & 10 & 224 &6.47E+0 & 9.34E+0 & 4.61E-1 & 3.35E-2 & 1.24E-9\\
        $5$ &$10$ & 16 &304 &9.49E+0 & 1.80E+1 & 1.57E+0 & 6.59E-2 & 3.44E-10 \\
        $5$ &$20$ & 21 &464 &2.96E+1 & 6.19E+1 & 7.01E+0  & 2.06E-1 & 1.51E-9 \\
        \hline
        $10$ &$5$ & 13 &304 &9.82E+0 & 1.69E+1 & 1.78E+0 & 6.48E-2 & 2.04E-10 \\
        $10$ &$20$ & 21 & 464 &3.01E+1 & 5.94E+1 & 7.21E+0 & 2.59E-1 & 9.74E-10 \\
        $10$ &$40$ & 30 & 784 &7.73E+1 & 2.09E+2 & 3.98E+1 & 7.31E-1 & 1.45E-8 \\
        \hline
        $20$ &$5$ & 13 & 464 &2.90E+1 & 4.93E+1 & 1.97E+1 & 6.99E-1 & 1.05E-10 \\
        $20$ &$10$ & 16 & 464 &2.98E+1 & 5.50E+1 & 7.78E+1 & 1.84E-1 & 4.41E-10 \\
        $20$ &$40$ & 30 & 784 &7.85E+1 & 2.17E+2 & 4.02E+1 & 8.04E-1 & 4.60E-9 \\
        \hline
        $40$ &$5$ & 13 &784 &7.41E+1 & 1.39E+2 & 2.02E+1 & 3.34E-1 & 3.39E-10 \\
        $40$ &$10$ & 16 & 784 &7.56E+1 & 1.55E+2 & 2.58E+1 & 4.46E-1 & 3.58E-9 \\
        $40$ &$20$ & 21 & 784 &7.75E+1 & 1.73E+2 & 3.32E+1 & 5.01E-1 & 3.01E-9 \\
        \hline
      \end{tabular}
    }    
  \end{subtable}
  \\ \vspace{\baselineskip}
  \begin{subtable}[t]{.48\textwidth}
    \centering
    \caption{The droplet.}
    \label{table_3}
    \resizebox{\textwidth}{!} {
      \begin{tabular}{|c|c|c|c|c|c|c|c|c|}
        \hline
        $k_0$ &$k_1$ & $N_f$ & $N_{pts}$ & $T_{kernel}$ & $T_{matgen}$
        &$T_{solve}$ &$T_{add}$ & $E_{error}$  \\
        \hline
        $5$ &$2$ & 7 &224 &1.67E+1 & 1.89E+1 & 3.24E-1 & 6.01E-2 & 2.71E-10 \\
        $5$ &$10$ & 10 & 320 &3.05E+1 & 3.72E+1 & 7.14E-1 & 8.39E-2 & 1.86E-9 \\
        $5$ &$20$ & 11 & 480 &7.52E+1 & 9.43E+1 & 2.39E+0 & 1.84E-1 & 5.11E-9 \\
        \hline
        $10$ &$5$ & 9 & 320 &3.05E+1 & 3.67E+1 & 5.47E-1 & 7.59E-2 & 1.24E-10 \\
        $10$ &$20$ & 11 & 480 &7.80E+1 & 9.69E+1 & 2.45E+0 & 1.84E-1 & 2.59E-9 \\
        $10$ &$40$ & 14 & 800 &2.02E+2 & 2.79E+2 & 1.13E+1 & 9.27E-1 & 6.76E-9 \\
        \hline
        $20$ &$5$ & 9 &480 &7.27E+1 & 8.86E+1 & 1.97E+0 & 2.28E-1 & 5.81E-10 \\
        $20$ &$10$ & 10 & 480 &7.56E+1 & 9.34E+1 & 1.99E+0 & 2.52E-1 & 1.31E-10 \\
        $20$ &$40$ & 14 & 800 &2.04E+2 & 2.77E+2 & 1.12E+1 & 9.27E-1 & 3.43E-9 \\
        \hline
        $40$ & $5$ & 9 & 800 &1.89E+2 & 2.39E+2 & 7.77E+0 & 6.07E-1 &  3.48E-10\\
        $40$ &$10$ & 10 & 800 &1.92E+2 & 2.53E+2 & 8.19E+0 & 5.88E-1 & 6.12E-10 \\
        $40$ &$20$ & 12 & 800 &2.05E+2 & 2.66E+2 & 1.01E+1  & 8.01E-1 & 1.45E-10 \\
        \hline
      \end{tabular}
    }
  \end{subtable}
  \hfill
  \begin{subtable}[t]{.48\textwidth}
    \centering
    \caption{The cylinder.}
    \label{table_4}
    \resizebox{\textwidth}{!} {
      \begin{tabular}{|c|c|c|c|c|c|c|c|c|}
        \hline
        $k_0$ &$k_1$ & $N_f$ & $N_{pts}$ & $T_{kernel}$ & $T_{matgen}$ & $T_{solve}$ &$T_{add}$ & $E_{error}$  \\
        \hline
        $2$ & $1$ & 9 & 1312 &1.06E+2 & 2.66E+2 & 4.46E+1 & 5.31E-1 &  4.11E-9\\
        $2$ &$5$ & 11 & 1440 &1.56E+2 & 3.80E+2 & 7.13E+1 & 7.26E-1 & 2.56E-8 \\
        $2$ &$10$ & 14 &1696 &2.18E+2 & 7.97E+2 & 1.35E+2  & 1.31E0 & 9.32E-7 \\
        \hline
        $5$ &$2$ & 10 &1440 &1.41E+2 & 3.58E+2 & 6.56E+1 & 6.63E-1 & 1.54E-8 \\
        $5$ &$10$ & 14 & 1696 &2.21E+2 & 7.17E+2 & 1.34E+2 & 1.42E0 & 5.92E-8 \\
        $5$ &$20$ & 17 & 2208 &3.44E+2 & 3.14E+3 & 3.61E+2 & 2.61E0 & 9.74E-7 \\
        \hline
        $10$ &$2$ & 10 & 1696 &1.94E+2 & 5.64E+2 & 9.92E+1 & 9.15E-1 & 1.91E-8 \\
        $10$ &$5$ & 11 & 1696 &2.21E+2 & 5.99E+2 & 1.07E+2 & 1.03E0 & 3.95E-8 \\
        $10$ &$20$ & 17 & 2208 &3.49E+2 & 2.27E+3 & 3.62E+2 & 2.54E0 & 2.24E-6 \\
        \hline
        $20$ &$2$ & 10 &2208 &3.29E+2 & 1.15E+3 & 2.19E+2 & 1.51E0 & 3.54E-8 \\
        $20$ &$5$ & 11 & 2208 &3.41E+2 & 1.23E+3 & 2.38E+2 & 1.61E0 & 1.39E-7 \\
        $20$ &$10$ & 14 & 2208 &3.43E+2 & 1.49E+3 & 2.96E+2 & 2.08E0 & 1.32E-7 \\
        \hline
      \end{tabular}
    }
  \end{subtable}
\end{table}

\vspace{\baselineskip}
\begin{figure}[h]
  \centering
  \begin{subfigure}[b]{.32\linewidth}
    \centering
    \includegraphics[width=.95\linewidth]{./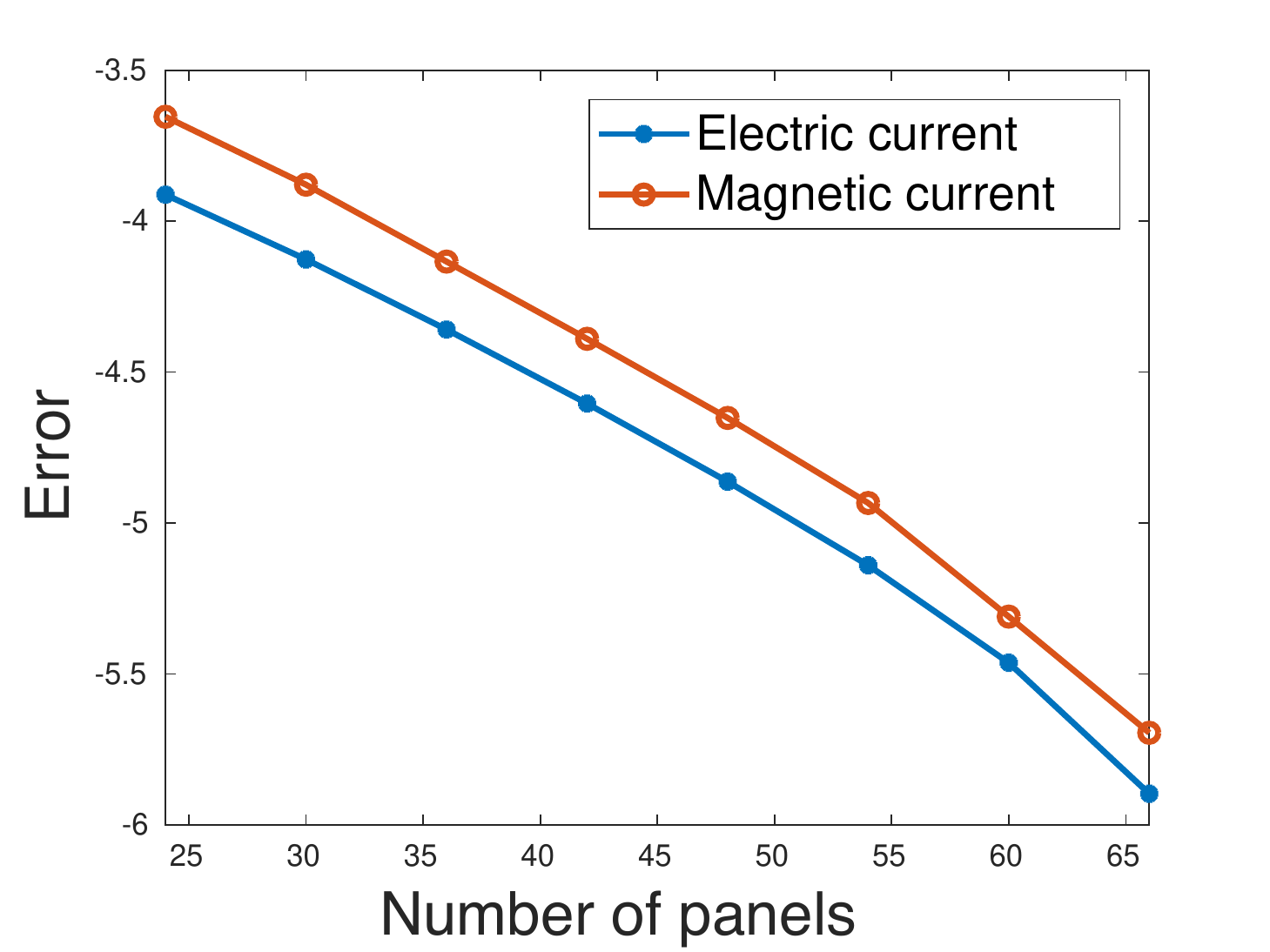}
    \caption{Current error}
    \label{fig:feko1}
  \end{subfigure}
  \hfill
  \begin{subfigure}[b]{.32\linewidth}
    \centering
    \includegraphics[width=.95\linewidth]{./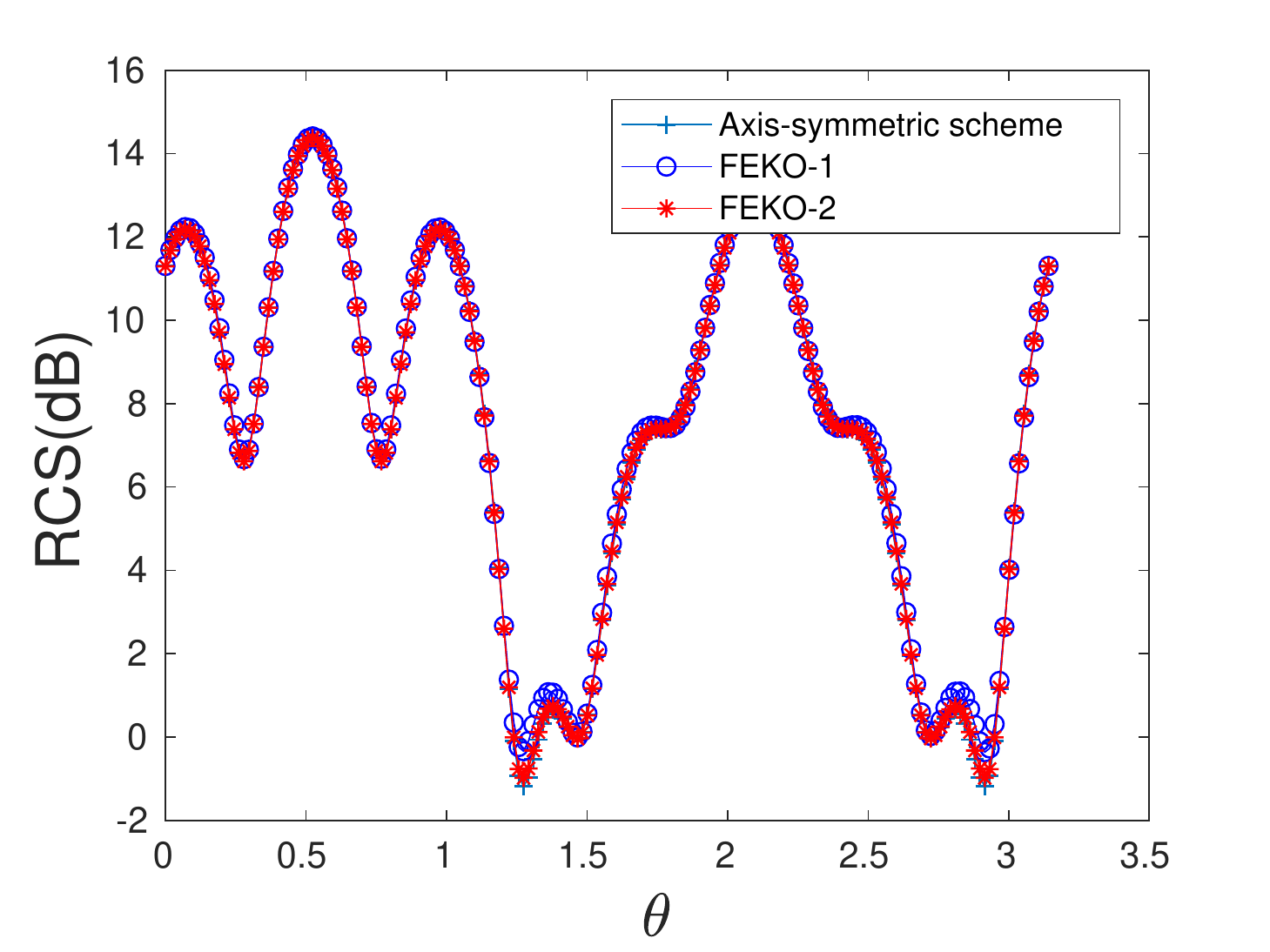}
    \caption{$|\besc_\infty(\cdot,\pi/2)|^2$}
    \label{fig:feko2}
  \end{subfigure}
  \hfill
  \begin{subfigure}[b]{.32\linewidth}
    \centering
    \includegraphics[width=.95\linewidth]{./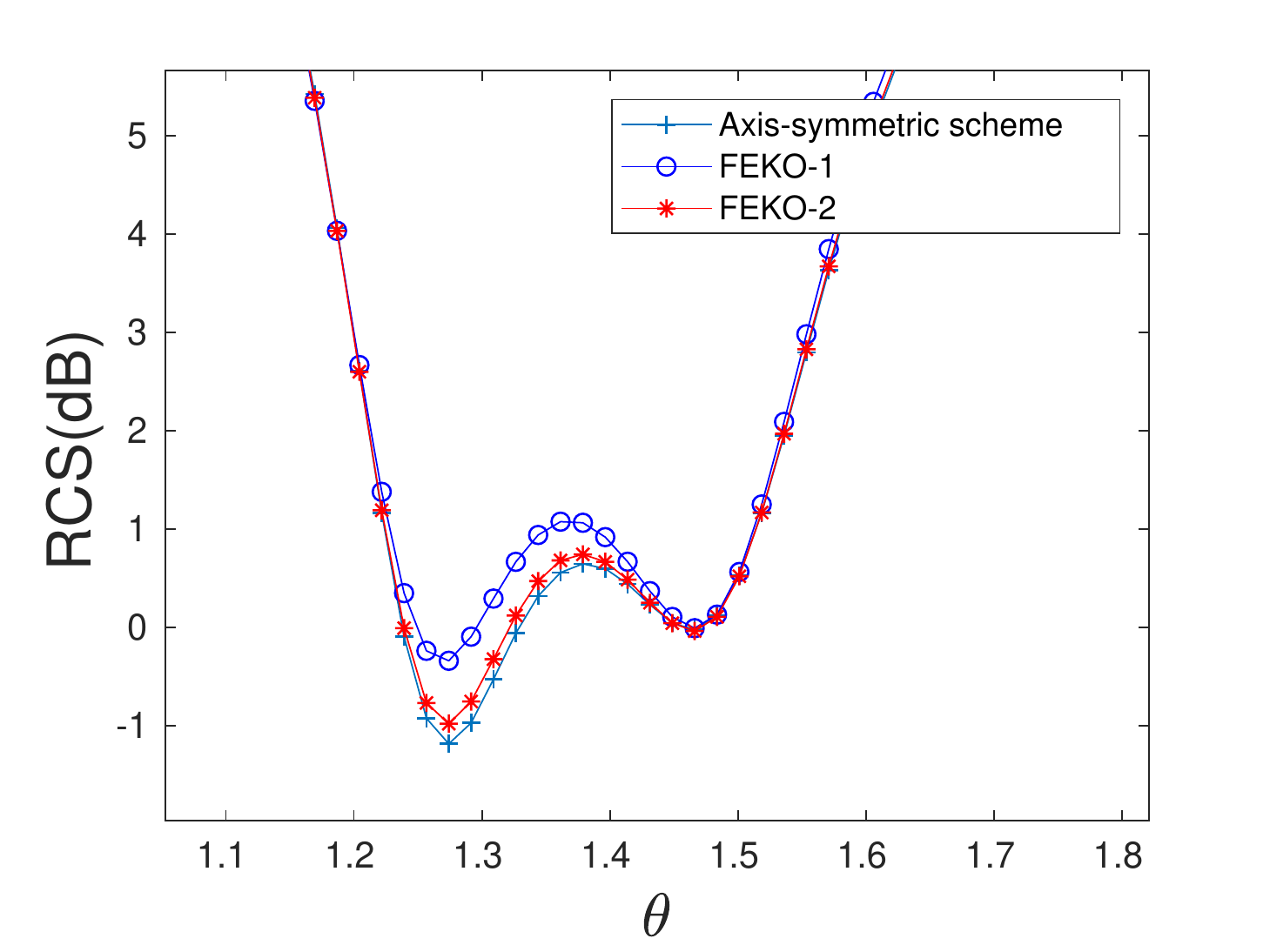}
    \caption{A zoom in of $|\besc_\infty(\cdot,\pi/2)|^2$ }
    \label{fig:feko3}
  \end{subfigure}
  \hfill
  \caption{Validating convergence results for the cylinder.
    (a) Self-consistent convergence of the electric and magnetic
      current from the scattering of a penetrable cylinder when the
      number of discretization panels is increased. The error is given
      in the logarithmic scale. (b) A comparison of the radar cross section
      computed using our scheme and that obtained by~\textit{FEKO}.
      Lines labeled \emph{FEKO-1} were obtained from a coarse mesh
      (edge length 0.2) and lines labeled \emph{FEKO-2} were obtained
      using a fine mesh (edge length 0.1). (c) A blow-up of part of
      results in (b).}
  \label{figure_feko}
\end{figure}

\clearpage}

To demonstrate the accuracy of our solver for scattering from the
cylinder, Figure~\ref{fig:feko1} shows the convergence of the electric
and magnetic currents when the number of panels is increased by 6,
starting with 24 panels.  In this case, we have set $k_0=\pi$,
$k_1=2\pi$, and the incident plane wave has parameters
$\theta_1 = \pi/3$, $\phi_1 = 2\pi/3$, $\theta_2=\pi/3$ and
$\phi_2 = \pi/2$ (as in~\eqref{incidentdirection}).  The error is
measured in the $L^2$ norm and is obtained by comparing the results
with those obtained using a 72 panel discretization.  To further
validate the numerical results of our solver we compare our far-field
patterns with those computed using the commercial software
package~\textit{FEKO}, which also uses a boundary integral method
(i.e. method of moments).  Figure~\ref{fig:feko2} and~\ref{fig:feko3}
show a comparison of the radar cross section (RCS) of the cylinder
when $\phi=\pi/2$ and $\theta\in[0,2\pi]$ (the RCS is the square of
the modulus of the far-field pattern). Two different meshes are used
to compute the RCS in~\textit{FEKO}. One is on a coarse mesh with edge
length 0.2 and another one is on a refined mesh with edge length
0.1. It can been seen from Figure~\ref{figure_feko} that the results
from~\textit{FEKO} converge to the result obtained by our scheme. It
should be also noticed that our solvers used to obtain the results in
Figures~\ref{fig:feko2} and~\ref{fig:feko3} only requires 18 seconds
for 37 modes and 24 panels (i.e. 384 discretization points on the
generating curve); since~\textit{FEKO} does not take advantage of
axisymmetric geometry, and does not discretize to high-order, more
than 200 seconds were required to obtain the solution on the refined
mesh (edge length 0.1).

\begin{figure}[t]
  \centering
  \begin{subfigure}[b]{.38\linewidth}
    \centering
    \includegraphics[width=.95\linewidth]{./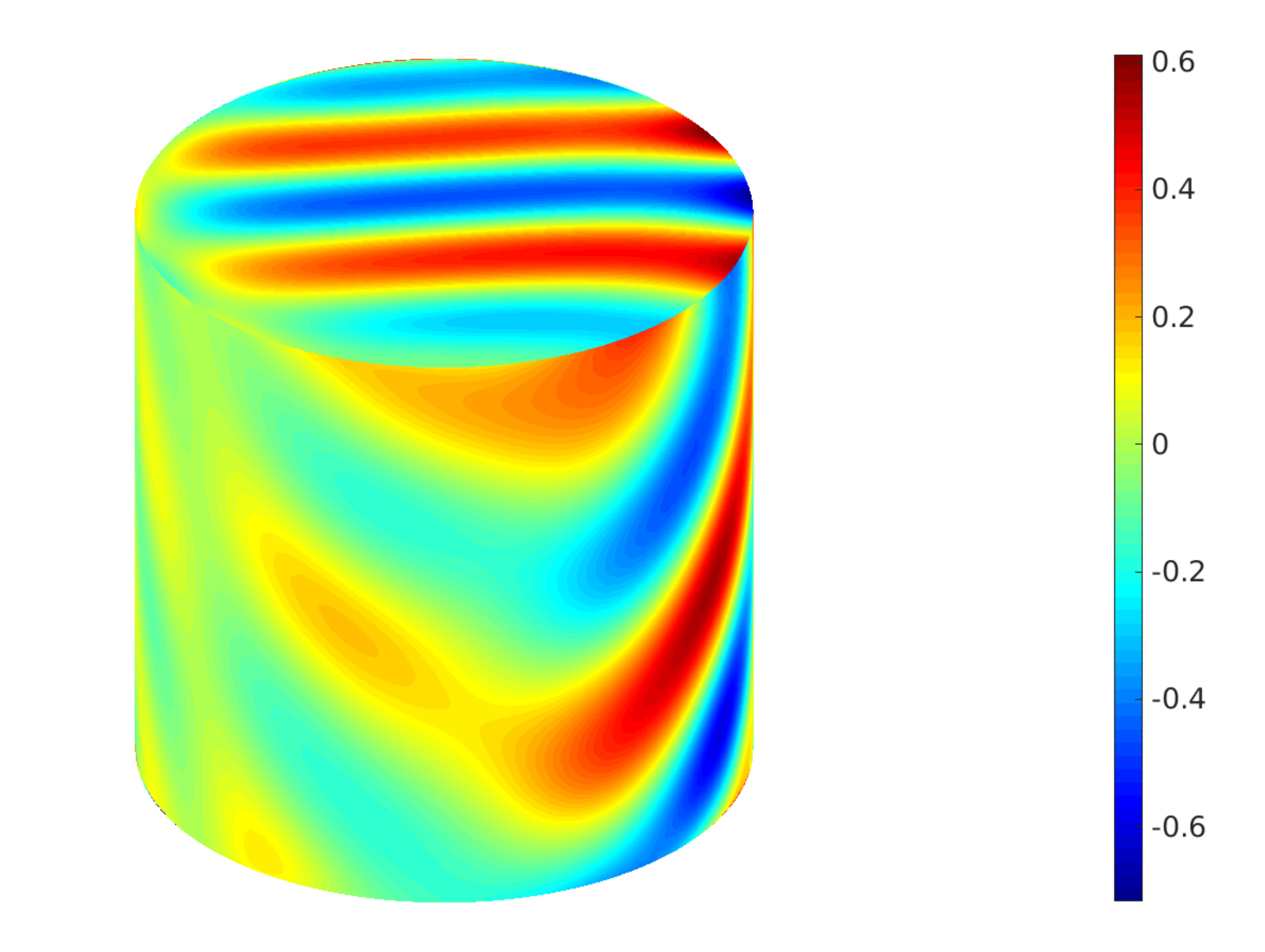}
    \caption{$\Re J_x$.}
  \end{subfigure}
  \hfill
  \begin{subfigure}[b]{.3\linewidth}
    \centering
    \includegraphics[width=.95\linewidth]{./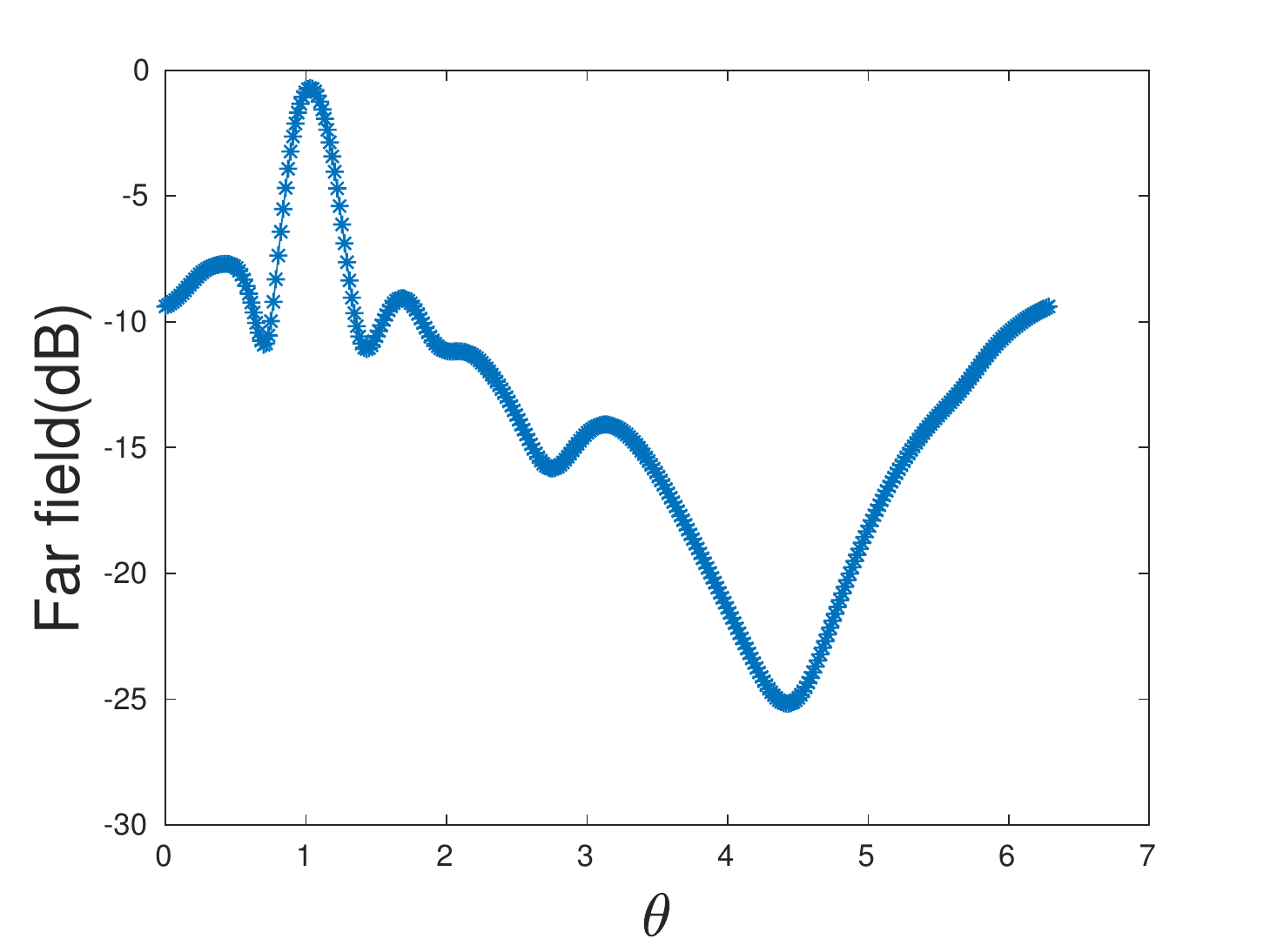}
    \caption{$|\besc_\infty(\cdot,\pi/2)|$.}
  \end{subfigure}
  \hfill
  \begin{subfigure}[b]{.3\linewidth}
    \centering
    \includegraphics[width=.95\linewidth]{./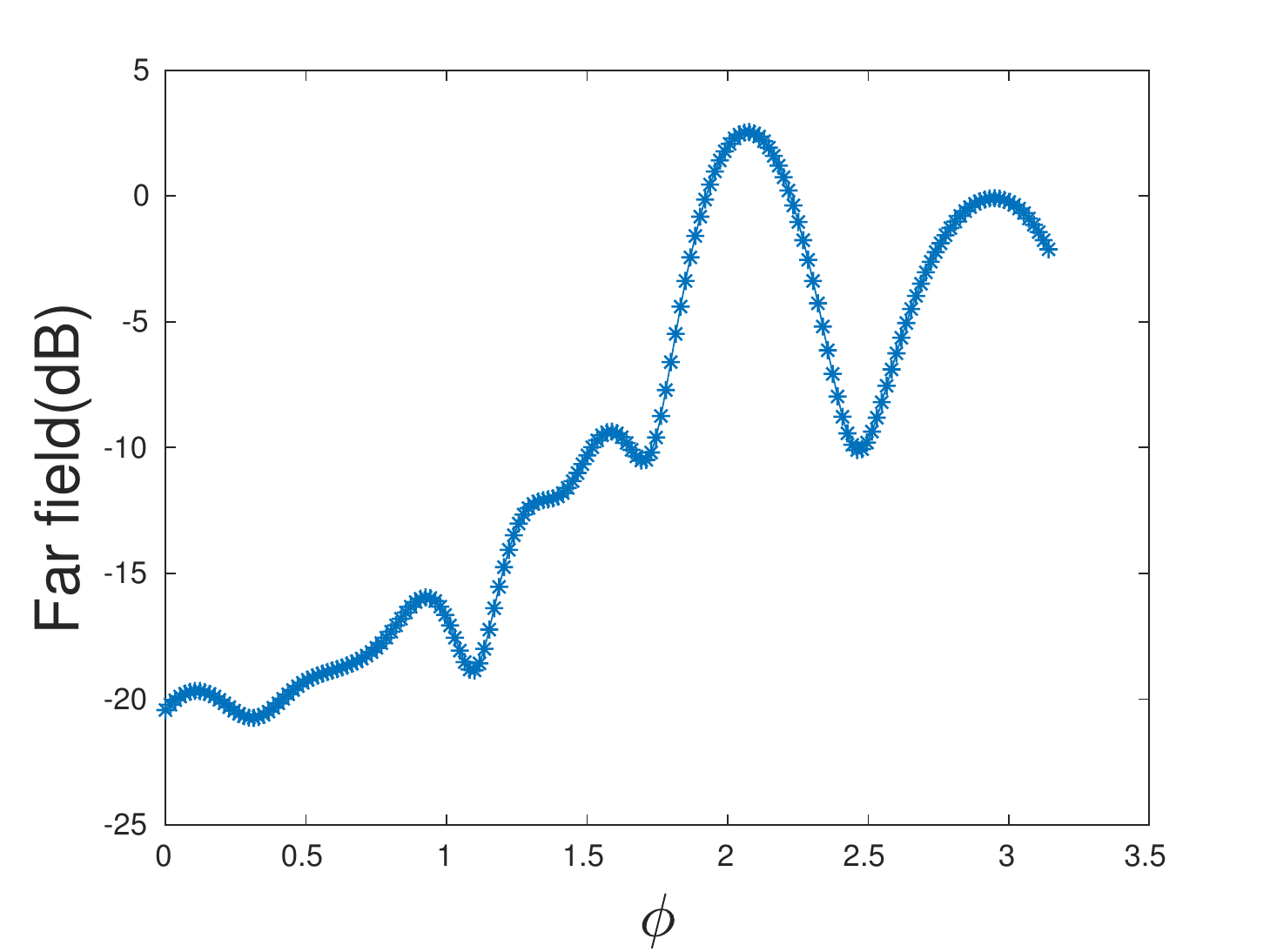}
    \caption{$|\besc_\infty(0,\cdot)|$.}
  \end{subfigure}
  \caption{A penetrable cylinder~$\Omega$  with interior 
    $k_1=5.0$ and background~$k_0=10.0$.}
  \label{figure_cylinder}
\end{figure}

\section{Conclusion}
\label{sec_con}
In this paper, we provided a derivation of M\"uller's integral
equation, and its indirect formulation, for electromagnetic scattering
from piecewise constant penetrable media.  The resulting integral
equations are second-kind when the boundary of the inclusion is
smooth, and remain relatively well-conditioned on $L^2$ when the boundary has a
modest number of edges or geometric singularities. In either case, the
integral equation representation admits a unique solution for all
ranges of interior and exterior material properties (as defined in the
introduction). Our numerical solver strongly takes advantage of the
axisymmetric geometry by using a Fourier-based separation of variables
in the azimuthal angle to obtain a sequence of decoupled integral
equations along a cross section of the geometry. Using FFTs, discrete
convolution, kernel splitting, and novel recurrence relations we are
able to efficiently evaluate the modal Green's functions and their
derivatives.  High-order accurate convergence is observed when
discretizing the integral equations using generalized Gaussian
quadratures and an adaptive Nystr\"om-like method.  Numerical examples
show that the algorithm can efficiently and accurately solve the
scattering problem from various axisymmetric objects, even in the
presence of geometric edge and point singularities.

Throughout all the numerical examples, we assume the
frequency~$\omega$ is in the resonant regime. In particular, it is not
close to zero. In the low-frequency regime, when $\omega$ approaches
zero, the operator $\cK^k/(i\omega)$ becomes numerically unstable,
which leads to the well-known low-frequency breakdown issue
\cite{EG10,EG13,epstein_2018}. In that case, M\"uller's formulation
(as well as its indirect form) has to be modified to overcome this
instability. Some helpful discussions regarding this can be found
in~\cite{Helsing2016}. However, when~$\omega$ is not close to zero,
the integral equation formulations of this paper extend directly and
remain well-conditioned in more complicated geometries. Extensions to
arbitrary geometries in three dimensions are underway, but will of
course require a completely new set of tools (fast algorithms,
quadrature, etc.). We will report on this in the future.

\appendix
\section{Proof of the recursion formulas}
\label{sec:recurrence}

For simplicity, we consider
\begin{equation}
\mathcal{P}_m = \int_0^{2\pi}\frac{\cos m \phi}{\sqrt{\chi-\cos\phi}}
\, d\phi 
\end{equation}
instead of~$\mathcal{Q}_{m-1/2}(\chi)$, since $\mathcal{Q}_{m-1/2}(\chi)=\mathcal{P}_m/\sqrt{8}$, and
\begin{align}
  \mathcal{S}_m(\chi) &= \int_0^{2\pi}\frac{\cos m
    \phi}{(\chi-\cos\phi)^{3/2}} \, d\phi, \\
  \mathcal{T}_m(\chi) &=  \int_0^{2\pi } \frac{\cos m \phi
  }{\chi- \cos\phi} \, d\phi.
\end{align}
We first derive the recursion formula for~$\mathcal{P}_{m}$.
\begin{lemma}\label{Alemma1} If $m\ge 1$, then
  \begin{equation}
    \mathcal{P}_{m+1}(\chi) = \frac{4m}{2m+1}\chi
    \mathcal{P}_{m}(\chi)-\frac{2m-1}{2m+1}\mathcal{P}_{m-1}(\chi).
  \end{equation}
\end{lemma}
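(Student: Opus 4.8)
The plan is to derive the recurrence entirely from two elementary trigonometric identities together with a single integration by parts, without appealing to the general theory of Legendre functions. First I would combine the two shifted integrands through $\cos(m+1)\phi + \cos(m-1)\phi = 2\cos m\phi \, \cos\phi$, which gives at once
\[
\mathcal{P}_{m+1}(\chi) + \mathcal{P}_{m-1}(\chi)
= 2\int_0^{2\pi}\frac{\cos m\phi \, \cos\phi}{\sqrt{\chi-\cos\phi}}\,d\phi.
\]
To recover a factor of $\mathcal{P}_m$ I would then substitute $\cos\phi = \chi - (\chi-\cos\phi)$ in the numerator, which splits the integral into
\[
\mathcal{P}_{m+1}(\chi) + \mathcal{P}_{m-1}(\chi)
= 2\chi \, \mathcal{P}_m(\chi) - 2 R_m(\chi),
\qquad
R_m(\chi) = \int_0^{2\pi}\cos m\phi \, \sqrt{\chi-\cos\phi}\,d\phi.
\]
All the difficulty is now concentrated in relating the auxiliary integral $R_m$, which carries the $+\tfrac12$ power rather than the $-\tfrac12$ power, back to the functions $\mathcal{P}$.

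The key step is to integrate $R_m$ by parts, taking $dv = \cos m\phi \, d\phi$ (so $v = \sin m\phi / m$) and $u = \sqrt{\chi-\cos\phi}$. The boundary term vanishes because $\sin m\phi$ is zero at both $\phi=0$ and $\phi=2\pi$ for integer $m$, and since $\tfrac{d}{d\phi}\sqrt{\chi-\cos\phi} = \sin\phi \, / \, 2\sqrt{\chi-\cos\phi}$, this yields
\[
R_m(\chi) = -\frac{1}{2m}\int_0^{2\pi}\frac{\sin m\phi \, \sin\phi}{\sqrt{\chi-\cos\phi}}\,d\phi.
\]
Applying the product-to-sum identity $\sin m\phi \, \sin\phi = \tfrac12\bigl[\cos(m-1)\phi - \cos(m+1)\phi\bigr]$ converts this back into the $\mathcal{P}$'s and produces the clean relation $R_m(\chi) = \tfrac{1}{4m}\bigl(\mathcal{P}_{m+1}(\chi) - \mathcal{P}_{m-1}(\chi)\bigr)$. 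I expect this to be the crux of the argument, since it is precisely the maneuver that turns the half-integer power into a computable expression in terms of the quantities we are trying to relate; everything before and after it is bookkeeping.

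Finally I would substitute this expression for $R_m$ into the identity $\mathcal{P}_{m+1} + \mathcal{P}_{m-1} = 2\chi \, \mathcal{P}_m - 2R_m$, multiply through by $2m$ to clear the denominator, and collect the $\mathcal{P}_{m+1}$ and $\mathcal{P}_{m-1}$ terms. This gives $(2m+1)\,\mathcal{P}_{m+1}(\chi) + (2m-1)\,\mathcal{P}_{m-1}(\chi) = 4m\chi \, \mathcal{P}_m(\chi)$, and dividing by $2m+1$ is exactly the claimed recurrence. The hypothesis $m\ge 1$ enters only to ensure that the division by $m$ in the integration by parts is legitimate; the recurrences for $\mathcal{S}_m$ and $\mathcal{T}_m$ stated earlier can be obtained by the same combination of a product-to-sum identity and one integration by parts, and I would treat them analogously.
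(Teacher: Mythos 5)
Your proof is correct and is essentially the paper's own argument in slightly reorganized form: your auxiliary integral $R_m$ is exactly the paper's $\mathcal{O}_m(\chi)=\int_0^{2\pi}\cos m\phi\,\sqrt{\chi-\cos\phi}\,d\phi$, and your key identity $R_m=\tfrac{1}{4m}\lp\mathcal{P}_{m+1}-\mathcal{P}_{m-1}\rp$, obtained by the same integration by parts against $d\lp\sqrt{\chi-\cos\phi}\rp$, is precisely the paper's first relation $\mathcal{P}_{m+1}=\mathcal{P}_{m-1}+4m\,\mathcal{O}_m$. The only difference is bookkeeping: the paper pairs this with the relation $\chi\mathcal{P}_m=\mathcal{P}_{m-1}+(2m+1)\mathcal{O}_m$ (from splitting $\cos m\phi$ by $(\chi-\cos\phi)+\cos\phi$) rather than your symmetric relation $\mathcal{P}_{m+1}+\mathcal{P}_{m-1}=2\chi\mathcal{P}_m-2R_m$, and then eliminates the auxiliary integral exactly as you do.
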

\begin{proof} Starting from  $\mathcal{P}_{m+1}$
  \begin{equation}
    \begin{aligned}
      \mathcal{P}_{m+1}(\chi) &= \int_{0}^{2\pi}
      \frac{\cos{(m+1)\phi}}{\sqrt{\chi-\cos 
          \phi}} \, d\phi \\
      &= \int_{0}^{2\pi}\frac{\cos{(m-1)\phi}}{\sqrt{\chi-\cos \phi}}
      \, d\phi 
      -2\int_{0}^{2\pi}\frac{\sin\phi \, \sin{m\phi}}{\sqrt{\chi-\cos
          \phi}} \, d\phi  \\
      &=\int_{0}^{2\pi}\frac{\cos{(m-1)\phi}}{\sqrt{\chi-\cos \phi}}
      \, d\phi 
      - 4\int_{0}^{2\pi}{\sin{m\phi}} \, d\lp \sqrt{\chi-\cos \phi} \rp \\
      &=\int_{0}^{2\pi}\frac{\cos{(m-1)\phi}}{\sqrt{\chi-\cos \phi}}
      \, d\phi 
      + 4m\int_{0}^{2\pi}{\cos{m\phi}} \, \sqrt{\chi-\cos \phi} \,
      d\phi \\
      &=\mathcal{P}_{m-1}(\chi) +4m \, \mathcal{O}_m(\chi),
    \end{aligned}
  \end{equation}
  where
  \begin{equation}
    \mathcal{O}_m(\chi)=\int_{0}^{2\pi} {\cos{m\phi}} \,
    \sqrt{\chi-\cos\phi} \, d\phi,
  \end{equation}
  and also by noting that~$\mathcal P_m$ can be expanded as:
  \begin{equation}
    \begin{aligned}
      \mathcal{P}_m &=\int_{0}^{2\pi}\frac{\cos{m\phi}}{\sqrt{\chi-\cos
          \phi}}d\phi \\
      &= \frac{1}{\chi}\int_{0}^{2\pi}\frac{(\chi-\cos \phi) \, 
        \cos{m\phi}}
      {\sqrt{\chi-\cos \phi}} \, d\phi+\frac{1}{\chi}
      \int_{0}^{2\pi}\frac{\cos\phi \, \cos{m\phi}}
      {\sqrt{\chi-\cos \phi}} \, d\phi \\
      &= \frac{1}{\chi}\mathcal{O}_m(\chi) + \frac{1}{\chi} \lp
      \int_{0}^{2\pi}
      \frac{\cos{(m-1)\phi}}{\sqrt{\chi-\cos \phi}} \, d\phi -
      \int_{0}^{2\pi}\frac{\sin \phi \, \sin{m\phi}}
      {\sqrt{\chi-\cos\phi}} \, d\phi \rp \\
      &= \frac{1}{\chi}\mathcal{O}_m(\chi)
      +\frac{1}{\chi} \lp \mathcal{P}_{m-1}(\chi) +2m \,
      \mathcal{O}_m(\chi)
      \rp.
    \end{aligned}
  \end{equation}
  The recursion formula follows by combining the two expressions
  together. $\square$
\end{proof}

\begin{lemma} If $m\ge 1$, then
  \begin{equation}
    \mathcal{S}_m(\chi)  = \frac{(1-2m)\chi
      \, \mathcal{P}_{m}(\chi) - {(1-2m)} \, \mathcal{P}_{m-1}(\chi) }
    {\chi^2-1}.
  \end{equation}
\end{lemma}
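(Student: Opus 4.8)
The plan is to close a small linear system relating $\mathcal{S}_{m-1}$, $\mathcal{S}_m$, $\mathcal{S}_{m+1}$ and $\mathcal{P}_m$, and then to eliminate the two neighboring $\mathcal{S}$-values. The governing identity, and the step I expect to be the crux, is an integration by parts that demotes the hypersingular $(\chi-\cos\phi)^{-3/2}$ kernel back to the weakly singular $(\chi-\cos\phi)^{-1/2}$ kernel. Since $\frac{d}{d\phi}(\chi-\cos\phi)^{-1/2} = -\tfrac12\,\sin\phi\,(\chi-\cos\phi)^{-3/2}$, integrating $\sin m\phi\cdot\sin\phi\,(\chi-\cos\phi)^{-3/2}$ by parts (the boundary terms vanishing by $2\pi$-periodicity, exactly as in Lemma~\ref{Alemma1}) gives
\[
\int_0^{2\pi}\frac{\sin\phi\,\sin m\phi}{(\chi-\cos\phi)^{3/2}}\,d\phi = 2m\,\mathcal{P}_m(\chi).
\]

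Next I would extract two relations from the product-to-sum identities $2\sin\phi\,\sin m\phi = \cos(m-1)\phi-\cos(m+1)\phi$ and $2\cos\phi\,\cos m\phi=\cos(m+1)\phi+\cos(m-1)\phi$. The first, combined with the integration-by-parts identity above, yields $\mathcal{S}_{m-1}-\mathcal{S}_{m+1}=4m\,\mathcal{P}_m$. For the second, I would evaluate $\int_0^{2\pi}\cos\phi\,\cos m\phi\,(\chi-\cos\phi)^{-3/2}\,d\phi$ in two ways: the product-to-sum identity gives $\tfrac12(\mathcal{S}_{m+1}+\mathcal{S}_{m-1})$, while writing $\cos\phi=\chi-(\chi-\cos\phi)$ gives $\chi\mathcal{S}_m-\mathcal{P}_m$. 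Equating the two produces $\mathcal{S}_{m+1}+\mathcal{S}_{m-1}=2\chi\mathcal{S}_m-2\mathcal{P}_m$.

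Solving this pair for the shifted values gives the clean recurrences $\mathcal{S}_{m-1}=\chi\mathcal{S}_m+(2m-1)\mathcal{P}_m$ and $\mathcal{S}_{m+1}=\chi\mathcal{S}_m-(2m+1)\mathcal{P}_m$. The remaining step is purely algebraic: replacing $m$ by $m-1$ in the second recurrence gives $\mathcal{S}_m=\chi\mathcal{S}_{m-1}-(2m-1)\mathcal{P}_{m-1}$, and substituting the first recurrence for $\mathcal{S}_{m-1}$ eliminates the neighbor to leave $\mathcal{S}_m=\chi^2\mathcal{S}_m+(2m-1)(\chi\mathcal{P}_m-\mathcal{P}_{m-1})$. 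Rearranging yields $(\chi^2-1)\mathcal{S}_m=(1-2m)(\chi\mathcal{P}_m-\mathcal{P}_{m-1})$, which is the claimed formula upon dividing by $\chi^2-1$ (legitimate for $\chi>1$, i.e. away from coincident source and target).

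The only genuine obstacle is the first step: recognizing that the $-3/2$ kernel is an exact $\phi$-derivative of the $-1/2$ kernel, so that integration by parts lowers the singularity order and returns $\mathcal{P}_m$; everything afterward is bookkeeping with trigonometric identities and a two-variable elimination. An essentially equivalent but less self-contained route would note that $\mathcal{S}_m=-2\,\partial_\chi\mathcal{P}_m$ by differentiation under the integral sign and then invoke the standard Legendre derivative relation $(\chi^2-1)\,\partial_\chi\mathcal{Q}_{m-1/2}=(m-\tfrac12)(\chi\mathcal{Q}_{m-1/2}-\mathcal{Q}_{m-3/2})$; I would favor the integral derivation above because it mirrors the proof of Lemma~\ref{Alemma1} and avoids citing external special-function identities.
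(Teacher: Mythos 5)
Your proof is correct, but it is organized quite differently from the paper's. The paper works entirely at the fixed index $m$: it splits $\mathcal{S}_m = \tfrac{1}{\chi}\mathcal{P}_m + \tfrac{1}{\chi}\,\mathcal{U}_m$ with $\mathcal{U}_m = \int_0^{2\pi}\cos\phi\,\cos m\phi\,(\chi-\cos\phi)^{-3/2}\,d\phi$, expands $\mathcal{U}_m$ by the same weight-splitting and an integration by parts (so that $\mathcal{S}_m$ reappears on the right-hand side together with $\mathcal{P}_{m+1}$ and $\mathcal{P}_{m-1}$), and then must invoke Lemma~\ref{Alemma1} to convert $\mathcal{P}_{m+1}$ into a combination of $\mathcal{P}_m$ and $\mathcal{P}_{m-1}$ before the claimed formula emerges. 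You instead exploit the index lattice of $\mathcal{S}$: your two relations $\mathcal{S}_{m-1}-\mathcal{S}_{m+1}=4m\,\mathcal{P}_m$ and $\mathcal{S}_{m+1}+\mathcal{S}_{m-1}=2\chi\,\mathcal{S}_m-2\,\mathcal{P}_m$ come from exactly the same two tools the paper uses (the observation that $\sin\phi\,(\chi-\cos\phi)^{-3/2}$ is an exact $\phi$-derivative, and the splitting $\cos\phi=\chi-(\chi-\cos\phi)$ plus product-to-sum identities), but solving them for $\mathcal{S}_{m\pm 1}=\chi\,\mathcal{S}_m\mp(2m\pm 1)\mathcal{P}_m$ and eliminating by an index shift closes the argument with no reference to Lemma~\ref{Alemma1} at all. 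What your route buys is self-containedness and two clean contiguous relations (which are the half-degree Legendre derivative relations in disguise, consistent with your remark that $\mathcal{S}_m=-2\,\partial_\chi\mathcal{P}_m$); what the paper's route buys is that it never leaves index $m$, so no edge case arises.

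One small point you should patch: your final elimination substitutes the recurrence $\mathcal{S}_{m+1}=\chi\,\mathcal{S}_m-(2m+1)\mathcal{P}_m$ at index $m-1$, and when $m=1$ this is the index-$0$ instance, which your pair of relations does not directly provide (the difference relation is vacuous at index $0$, or requires the convention $\mathcal{S}_{-1}=\mathcal{S}_1$). This is not a genuine gap: the needed identity $\mathcal{S}_1=\chi\,\mathcal{S}_0-\mathcal{P}_0$ follows in one line from the splitting alone,
\begin{equation}
  \mathcal{S}_1=\int_0^{2\pi}\frac{\cos\phi}{(\chi-\cos\phi)^{3/2}}\,d\phi
  =\int_0^{2\pi}\frac{\chi-(\chi-\cos\phi)}{(\chi-\cos\phi)^{3/2}}\,d\phi
  =\chi\,\mathcal{S}_0-\mathcal{P}_0,
\end{equation}
but you should state it (or note the evenness $\mathcal{S}_{-m}=\mathcal{S}_m$) so that the case $m=1$ of the lemma is covered.
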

\begin{proof}
  Performing a similar calculation as in the previous lemma, we have:
  \begin{equation}
    \label{equ_Pn}
    \begin{aligned}
      \mathcal{S}_m(\chi) &=
      \frac{1}{\chi}\int_{0}^{2\pi}\frac{(\chi-\cos
        \phi)\cos{m\phi}}{(\chi-\cos \phi)^{3/2}} \, d\phi
      +\frac{1}{\chi}\int_{0}^{2\pi}\frac{\cos
        \phi \, \cos{m\phi}}{(\chi-\cos \phi)^{3/2}} \, d\phi \\
      &= \frac{1}{\chi} \mathcal{P}_m(\chi) +
      \frac{1}{\chi}\mathcal{U}_m(\chi),
    \end{aligned}
  \end{equation}
  where
  \begin{equation}
    \mathcal{U}_m(\chi) =
    \int_{0}^{2\pi} \frac{\cos \phi \, \cos{m\phi}} {(\chi-\cos
      \phi)^{3/2}}
    \, d\phi.
  \end{equation}
  It then holds that
  \begin{equation}\label{equ_Tn}
  \resizebox{.9 \textwidth}{!} {
    $
    \begin{aligned}
      \mathcal{U}_m(\chi)
      &=\frac{1}{\chi}\int_{0}^{2\pi}\frac{(\chi\cos \phi-\cos^2 \phi)
        \, \cos{m\phi}}{(\chi-\cos \phi)^{3/2}} \, d\phi +
      \frac{1}{\chi}\int_{0}^{2\pi}\frac{\cos^2 \,
        \phi\cos{m\phi}}{(\chi-\cos \phi)^{3/2}} \, d\phi  \\
      &= \frac{1}{\chi}\int_{0}^{2\pi}\frac{\cos\phi \, \cos{m\phi}}
      {(\chi-\cos \phi)^{1/2}} \, d\phi +
      \frac{1}{\chi}\int_{0}^{2\pi}
      \frac{\cos^2 \phi \, \cos{m\phi}} {(\chi-\cos \phi)^{3/2}} \, d\phi \\
      &= \frac{1}{2}\frac{1}{\chi} \lp
      \mathcal{P}_{m+1}(\chi)+\mathcal{P}_{m-1}(\chi) \rp
      +  \frac{1}{\chi} \lp \int_{0}^{2\pi}\frac{\cos{m\phi}}
      {(\chi-\cos \phi)^{3/2}} \, d\phi- \int_{0}^{2\pi}\frac{\sin^2
        \phi \, \cos{m\phi}} {(\chi-\cos \phi)^{3/2}} \, d\phi \rp  \\
      &= \frac{1}{2}\frac{1}{\chi} \lp \mathcal{P}_{m+1}(\chi) +
      \mathcal{P}_{m-1}(\chi) \rp
      + \frac{1}{\chi}\mathcal{S}_m(\chi)
      + \frac{2}{\chi}\int_{0}^{2\pi}\sin \phi\cos{m\phi} \, d\lp
      \frac{1}{\sqrt{\chi-\cos \phi}} \rp \\
      &=
      \frac{1}{2}\frac{1}{\chi}  \lp \mathcal{P}_{m+1}(\chi)  +
      \mathcal{P}_{m-1}(\chi) \rp +
      \frac{1}{\chi}\mathcal{S}_m(\chi)  -
      \frac{2}{\chi}\int_{0}^{2\pi}\frac{\cos
        \phi \, \cos{m\phi}}{\sqrt{\chi-\cos \phi}} \, d\phi +
      \frac{2m}{\chi}\int_{0}^{2\pi}\frac{\sin \phi \,
        \sin{m\phi}}{\sqrt{\chi-\cos \phi}} \, d\phi  \\
      &=
      \frac{1}{2}\frac{1}{\chi}(\mathcal{P}_{m+1}(\chi) + \mathcal{P}_{m-1}(\chi)) +
      \frac{1}{\chi}\mathcal{S}_m(\chi) -
      \frac{1}{\chi}(\mathcal{P}_{m+1}(\chi) + \mathcal{P}_{m-1}(\chi)
      )-\frac{m}{\chi}(\mathcal{P}_{m+1}(\chi) - \mathcal{P}_{m-1}(\chi)).
    \end{aligned}
    $
    }
  \end{equation}
  Combining equations~\eqref{equ_Pn} and~\eqref{equ_Tn}, and using
  Lemma~\ref{Alemma1}, we obtain the final recurrence relation
  \begin{equation}
    \begin{aligned}
      \mathcal{S}_m(\chi) &= \frac{1}{\chi^2-1} \lp  \chi \, 
      \mathcal{P}_m(\chi) + \frac{2m-1}{2} \mathcal{P}_{m-1}(\chi)
      -\frac{2m+1}{2} \mathcal{P}_{m+1}(\chi) \rp \\
      &=\frac{1}{\chi^2-1} \lp (1-2m) \chi \, 
      \mathcal{P}_m(\chi) - {(1-2m)} \mathcal{P}_{m-1}\chi \rp).
      \end{aligned}
    \end{equation}
    This ends the proof.  $\square$
  \end{proof}

Finally, we have the following lemma and its proof.
      
\begin{lemma} If~$m\ge 1$, then
  $\mathcal{T}_{m+1}(\chi) = 2\chi \mathcal{T}_{m}(\chi)-\mathcal{T}_{m-1}(\chi)$.
\end{lemma}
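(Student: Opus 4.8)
The plan is to follow the same template used above for $\mathcal{P}_m$ and $\mathcal{S}_m$, but the computation for $\mathcal{T}_m$ is considerably shorter. The target relation is precisely the three-term recurrence satisfied by Chebyshev polynomials, so I expect it to drop out of the product-to-sum identity $\cos(m+1)\phi + \cos(m-1)\phi = 2\cos\phi\,\cos m\phi$ together with the orthogonality of $\cos m\phi$ on $[0,2\pi]$.

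First I would add the two outer terms of the recurrence and apply that trigonometric identity to the numerator, giving
\begin{equation*}
  \mathcal{T}_{m+1}(\chi) + \mathcal{T}_{m-1}(\chi)
  = \int_0^{2\pi} \frac{2\cos\phi \, \cos m\phi}{\chi - \cos\phi} \, d\phi.
\end{equation*}
Next I would remove the stray factor of $\cos\phi$ in the numerator by the elementary splitting
\begin{equation*}
  \frac{\cos\phi}{\chi - \cos\phi} = -1 + \frac{\chi}{\chi - \cos\phi},
\end{equation*}
which is exactly the same algebraic trick (adding and subtracting $\chi$) that was used in the previous two lemmas. This turns the integral into
\begin{equation*}
  -2\int_0^{2\pi} \cos m\phi \, d\phi
  + 2\chi \int_0^{2\pi} \frac{\cos m\phi}{\chi - \cos\phi} \, d\phi.
\end{equation*}

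Finally, since $m \ge 1$ the first integral vanishes by orthogonality, while the second is by definition $2\chi \, \mathcal{T}_m(\chi)$; rearranging gives $\mathcal{T}_{m+1}(\chi) = 2\chi \, \mathcal{T}_m(\chi) - \mathcal{T}_{m-1}(\chi)$, as claimed. There is no real obstacle here beyond bookkeeping; the one point requiring care is the hypothesis $m \ge 1$, which is exactly what forces $\int_0^{2\pi} \cos m\phi \, d\phi = 0$ and thereby kills the constant term that would otherwise spoil the clean Chebyshev form. (For $m = 0$ that term would survive, which is consistent with the special initial value $\mathcal{T}_1 = -2\pi + \chi\,\mathcal{T}_0$ quoted earlier.)
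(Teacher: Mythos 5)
Your proof is correct and is essentially the paper's own argument run in reverse: the paper starts from $\mathcal{T}_m$, inserts $\chi = (\chi-\cos\phi)+\cos\phi$ (with orthogonality killing the constant piece for $m\ge 1$) and then applies the product-to-sum identity to arrive at $\mathcal{T}_m = \tfrac{1}{2\chi}\lp\mathcal{T}_{m+1}+\mathcal{T}_{m-1}\rp$, whereas you start from $\mathcal{T}_{m+1}+\mathcal{T}_{m-1}$ and use the same two identities in the opposite order. Your explicit remark on where the hypothesis $m\ge 1$ enters, and the consistency check against the initial value $\mathcal{T}_1 = -2\pi + \chi\,\mathcal{T}_0$, make explicit a step the paper leaves implicit.
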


\begin{proof}
  \begin{equation}
    \begin{aligned}
    \mathcal{T}_m(\chi) &= \frac{1}{\chi} \int_0^{2\pi } \frac{\chi \,
      \cos m \phi }{\chi- \cos\phi} \, d\phi \\
    &= \frac{1}{\chi}\int_0^{2\pi } \frac{\cos\phi \, \cos m \phi }
    {\chi- \cos\phi} \, d\phi \\
    &= \frac{1}{2\chi} \lp \mathcal{T}_{m+1}(\chi) + \mathcal{T}_{m-1}
    (\chi) \rp.
  \end{aligned}
\end{equation}
\end{proof}
        

\bibliographystyle{abbrv}
\bibliography{reference}

\begin{thebibliography}{10}

\bibitem{alpert1999}
B.~Alpert.
\newblock Hybrid {G}auss-trapezoidal quadrature rules.
\newblock {\em {SIAM} {J}. {S}ci. {C}omput.}, 20(5):1551--1584, 1999.

\bibitem{andreasen1965}
M.~Andreasen.
\newblock Scattering from bodies of revolution.
\newblock {\em IEEE Trans. Antennas Propag.}, 13(2):303--310, 1965.

\bibitem{banerjee1981}
P.~K. Banerjee and R.~Butterfield.
\newblock {\em Boundary element methods in engineering science}.
\newblock McGraw-Hill, London, UK, 1981.

\bibitem{Brem2012}
J.~Bremer.
\newblock On the {N}ystr\"om discretization of integral equations on planar
  curves with corners.
\newblock {\em Appl. Comput. Harm. Anal.}, 32:45--64, 2012.

\bibitem{Bremer2012}
J.~Bremer and Z.~Gimbutas.
\newblock {A Nystr{\"{o}}m method for weakly singular integral operators on
  surfaces}.
\newblock {\em J. Comput. Phys.}, 231(14):4885--4903, 2012.

\bibitem{BGR2010}
J.~Bremer, Z.~Gimbutas, and V.~Rokhlin.
\newblock A nonlinear optimization procedure for generalized {G}aussian
  quadratures.
\newblock {\em SIAM J. Sci. Comput.}, 32(4):1761--1788, 2010.

\bibitem{briggs_1995}
W.~L. Briggs and V.~E. Henson.
\newblock {\em {The DFT: An Owner's Manual for the Discrete Fourier
  Transform}}.
\newblock SIAM, Philadelphia, PA, 1995.

\bibitem{bruno2003fast}
O.~P. Bruno.
\newblock Fast, high-order, high-frequency integral methods for computational
  acoustics and electromagnetics.
\newblock In {\em Topics in computational wave propagation}, pages 43--82.
  Springer, 2003.

\bibitem{bruno2009krylov}
O.~P. Bruno, T.~Elling, R.~Paffenroth, and C.~Turc.
\newblock Electromagnetic integral equations requiring small numbers of
  {K}rylov-subspace iterations.
\newblock {\em J. Comput. Phys.}, 228:6169--6183, 2009.

\bibitem{bruno1998jasa}
O.~P. Bruno and F.~Reitich.
\newblock Boundary-variation solutions for bounded-obstacle scattering problems
  in three dimensions.
\newblock {\em J. Acoust. Soc. Am.}, 104(5):2579--2583, 1998.

\bibitem{Bulygin2013Full}
V.~S. Bulygin, T.~M. Benson, Y.~V. Gandel, and A.~I. Nosich.
\newblock {Full-Wave Analysis and Optimization of a TARA-Like Shield-Assisted
  Paraboloidal Reflector Antenna Using a Nystrom-Type Method}.
\newblock {\em IEEE Trans. Antennas Propag.}, 61(10):4981--4989, 2013.

\bibitem{chew2009book}
W.~C. Chew, M.~S. Tong, and B.~Hu.
\newblock {\em {Integral Equation Methods for Electromagnetic and Elastic
  Waves}}.
\newblock Morgan \& Claypool, Williston, VT, USA, 2009.

\bibitem{cohl_1999}
H.~S. Cohl and J.~E. Tohline.
\newblock A compact cylindrical {G}reen's function expansion for the solution
  of potential problems.
\newblock {\em Astrophys. J.}, 527(1):86--101, 1999.

\bibitem{Cot2}
D.~Colton and R.~Kress.
\newblock {\em Integral Equation Method in Scattering Theory}.
\newblock Wiley-Interscience, New York, 1983.

\bibitem{conway_cohl}
J.~T. Conway and H.~S. Cohl.
\newblock Exact {F}ourier expansion in cylindrical coordinates for the
  three-dimensional {H}elmholtz {G}reen function.
\newblock {\em Z. {A}ngew. {M}ath. {P}hys.}, 61:425--442, 2010.

\bibitem{costabel1988}
M.~Costabel.
\newblock {B}oundary integral operators on lipschitz domains: {E}lementary
  results.
\newblock {\em SIAM J. Math. Anal.}, 19:613--626, 1988.

\bibitem{dunn2006}
E.~A. Dunn, J.-K. Byun, E.~D. Branch, and J.-M. Jin.
\newblock {Numerical Simulation of BOR Scattering and Radiation Using a Higher
  Order FEM}.
\newblock {\em IEEE Trans. Antennas Propag.}, 54(3):945--952, 2006.

\bibitem{EG10}
C.~L. Epstein and L.~Greengard.
\newblock {Debye Sources and the Numerical Solution of the Time Harmonic
  Maxwell Equations}.
\newblock {\em Comm. Pure Appl. Math.}, 63(4):413--463, 2010.

\bibitem{EG13}
C.~L. Epstein, L.~Greengard, and M.~O'Neil.
\newblock {Debye Sources and the Numerical Solution of the Time Harmonic
  Maxwell Equations II}.
\newblock {\em Comm. Pure Appl. Math.}, 66(5):753--789, 2013.

\bibitem{epstein_2018}
C.~L. Epstein, L.~Greengard, and M.~O'Neil.
\newblock A high-order wideband direct solver for electromagnetic scattering
  from bodies of revolution.
\newblock {\em J. Comput. Phys.}, 2019.
\newblock To appear.

\bibitem{fang2007jcp}
Q.~Fang, D.~P. Nicholls, and J.~Shen.
\newblock A stable, high‐order method for three‐dimensional,
  bounded‐obstacle, acoustic scattering.
\newblock {\em J. Comput. Phys.}, 224(2):1145--1169, 2007.

\bibitem{fleming2004}
J.~L. Fleming, A.~W. Wood, and W.~D.~W. Jr.
\newblock {Locally corrected Nystr\"om method for EM scattering by bodies of
  revolution}.
\newblock {\em J. Comput. Phys.}, 196:41--52, 2004.

\bibitem{gedney_1990}
S.~D. Gedney and R.~Mittra.
\newblock The use of the {FFT} for the efficient solution of the problem of
  electromagnetic scattering by a body of revolution.
\newblock {\em IEEE Trans. Antennas Propag.}, 38:313--322, 1990.

\bibitem{Geng1999Wide}
N.~Geng and L.~Carin.
\newblock {Wide-band electromagnetic scattering from a dielectric BOR buried in
  a layered lossy dispersive medium}.
\newblock {\em IEEE Trans. Antennas Propag.}, 47(4):610--619, 1999.

\bibitem{gil_2007}
A.~Gil, J.~Segura, and N.~M. Temme.
\newblock {\em {Numerical Methods for Special Functions}}.
\newblock SIAM, Philadelphia, PA, 2007.

\bibitem{GYM2012}
A.~Gillman, P.~M. Young, and P.-G. Martinsson.
\newblock A direct solver with {$O(N)$} complexity for integral equations on
  one-dimensional domains.
\newblock {\em Front. Math. China}, 7(2):217--247, 2012.

\bibitem{GG2013}
Z.~Gimbutas and L.~Greengard.
\newblock Fast multi-particle scattering: A hybrid solver for the {Maxwell}
  equations in microstructured materials.
\newblock {\em J. Comput. Phys.}, 232:22--32, 2013.

\bibitem{gustafsson2010accurate}
M.~Gustafsson.
\newblock Accurate and efficient evaluation of modal {G}reen's functions.
\newblock {\em J. Electromagnet. Wave.}, 24(10):1291--1301, 2010.

\bibitem{hao_2014}
S.~Hao, A.~H. Barnett, P.~G. Martinsson, and P.~Young.
\newblock High-order accurate {N}ystr\"om discretization of integral equations
  with weakly singular kernels on smooth curves in the plane.
\newblock {\em Adv. Comput. Math.}, 40:245--272, 2014.

\bibitem{Hao2015}
S.~Hao, P.-G. Martinsson, and P.~Young.
\newblock {An efficient and highly accurate solver for multi-body acoustic
  scattering problems involving rotationally symmetric scatterers}.
\newblock {\em Comput. Math. Appl.}, 69:304--318, 2015.

\bibitem{harrington1968}
R.~F. Harrington.
\newblock {\em {Field Computation by Moment Methods}}.
\newblock Macmillan, Co., New York, NY, 1968.

\bibitem{helsing2015}
J.~Helsing and A.~Holst.
\newblock Variants of an explicit kernel-split panel-based {N}ystr\"om
  discretization scheme for {H}elmholtz boundary value problems.
\newblock {\em Adv. Comput. Math.}, 41:691--708, 2015.

\bibitem{helsing2014}
J.~Helsing and A.~Karlsson.
\newblock An explicit kernel-split panel-based {N}ystr\"om scheme for integral
  equations on axially symmetric surfaces.
\newblock {\em J. Comput. Phys.}, 272:686--703, 2014.

\bibitem{helsing2015ieee}
J.~Helsing and A.~Karlsson.
\newblock {Determination of Normalized Magnetic Eigenfields in Microwave
  Cavities}.
\newblock {\em IEEE Trans. Microw. Theory Tech.}, 63(5):1457--1467, 2015.

\bibitem{HK15}
J.~Helsing and A.~Karlsson.
\newblock Determination of normalized electric eigenfields in microwave
  cavities with sharp edges.
\newblock {\em J. Comput. Phys.}, 304:465 -- 486, 2016.

\bibitem{Helsing2016}
J.~Helsing and A.~Karlsson.
\newblock {Resonances in axially symmetric dielectric objects}.
\newblock {\em IEEE Trans. Microw. Theory Tech.}, 65(7):2214--2227, 2017.

\bibitem{HG2012}
K.~L. Ho and L.~Greengard.
\newblock A fast direct solver for structured linear systems by recursive
  skeletonization.
\newblock {\em SIAM J. Sci. Comput.}, 34(5):2507--2532, 2012.

\bibitem{imbert2018}
L.-M. Imbert-Gerard, F.~Vico, L.~Greengard, and M.~Ferrando.
\newblock Integral equation methods for electrostatics, acoustics and
  electromagnetics in smoothly varying, anisotropic media.
\newblock 2018.
\newblock arXiv:1805.04791 [math.NA].

\bibitem{jackson1999}
J.~D. Jackson.
\newblock {\em {Classical Electrodynamics}}.
\newblock Wiley, Hoboken, NJ, 3rd edition, 1999.

\bibitem{KH2015}
A.~Kirsch and F.~Hettlich.
\newblock {\em The Mathematical Theory of Time-Harmonic Maxwell's Equations}.
\newblock Springer Verlag, Cham, Switzerland, 2015.

\bibitem{Kress2010}
R.~Kress.
\newblock {\em Linear Integral Equations}.
\newblock Springer, New York, 1999.

\bibitem{JL2014}
J.~Lai, S.~Ambikasaran, and L.~F. Greengard.
\newblock {A Fast Direct Solver for High Frequency Scattering from a Large
  Cavity in Two Dimensions}.
\newblock {\em SIAM J. Sci. Comput.}, 36(6):B887--B903, 2014.

\bibitem{LAI20171}
J.~Lai, L.~Greengard, and M.~O'Neil.
\newblock Robust integral formulations for electromagnetic scattering from
  three-dimensional cavities.
\newblock {\em Journal of Computational Physics}, 345:1 -- 16, 2017.

\bibitem{Liu2015}
Y.~Liu and A.~H. Barnett.
\newblock Efficient numerical solution of acoustic scattering from
  doubly-periodic arrays of axisymmetric objects.
\newblock {\em J. Comput. Phys.}, 324:226 -- 245, 2016.

\bibitem{mautz1969bor}
J.~R. Mautz and R.~F. Harrington.
\newblock Radiation and scattering from bodies of revolution.
\newblock {\em Appl. Sci. Res.}, 20(1):405--435, 1969.

\bibitem{mautz1977}
J.~R. Mautz and R.~F. Harrington.
\newblock Electromagnetic scattering from a homogeneous body of revolution.
\newblock Technical report, Department of Electrical and Computer Engineering,
  Syracuse, NY, November 1977.
\newblock TS-77-20.

\bibitem{MH1978}
J.~R. Mautz and R.~F. Harrington.
\newblock {H}-field, {E}-field and combined-field solutions for conducting
  bodies of revolution.
\newblock {\em Arch. Elec. Ubertragung.}, 32:159--164, 1978.

\bibitem{medgyesi1984}
L.~N. Medgyesi-Mitschang and J.~M. Putnam.
\newblock {Electromagnetic Scattering from Axially Inhomogeneous Bodies of
  Revolution}.
\newblock {\em IEEE Trans. Antennas Propag.}, 32(8):797--806, 1984.

\bibitem{morgan1979}
M.~A. Morgan and K.~K. Mei.
\newblock {Finite-Element Computation of Scattering by Inhomogeneous Penetrable
  Bodies of Revolution}.
\newblock {\em IEEE Trans. Antennas Propag.}, 27(2):202--214, 1979.

\bibitem{MULLER}
C.~M\"{u}ller.
\newblock {\em Foundations of the Mathematical Theory of Electromagnetic
  Waves}.
\newblock Springer Verlag, 1969.

\bibitem{Ned01}
J.-C. Nedelec.
\newblock {\em Acoustic and Electromagnetic Equations}.
\newblock Springer-Verlag New York, 2001.

\bibitem{nicholls2006sisc}
D.~P. Nicholls and J.~Shen.
\newblock {A Stable High‐Order Method for Two‐Dimensional
  Bounded‐Obstacle Scattering}.
\newblock {\em SIAM J. Sci. Comput.}, 28(4):1398--1419, 2006.

\bibitem{Hand2010}
F.~W.~J. Olver, D.~W. Lozier, R.~F. Boisvert, and C.~W. Clark.
\newblock {\em NIST Handbook of Mathematical Functions}.
\newblock Cambridge University Press, New York, 2010.

\bibitem{oneil2018}
M.~O'Neil and A.~J. Cerfon.
\newblock An integral equation-based numerical solver for {T}aylor states in
  toroidal geometries.
\newblock {\em J. Comput. Phys.}, 359:263--282, 2018.

\bibitem{rok2}
V.~Rokhlin.
\newblock Solution of acoustic scattering problems by means of second kind
  integral equations.
\newblock {\em Wave Motion}, 5:257--272, 1983.

\bibitem{GMRES1986}
Y.~Saad and M.~Schultz.
\newblock {GMRES}: {A} generalized minimal residual algorithm for solving
  nonsymmetric linear-systems.
\newblock {\em SIAM J. Sci. Stat. Comput.}, 7:856--869, 1986.

\bibitem{serkh2016}
K.~Serkh and V.~Rokhlin.
\newblock On the solution of elliptic partial differential equations on regions
  with corners.
\newblock {\em J. Comput. Phys.}, 305:150--171, 2016.

\bibitem{vasilev1966}
E.~N. Vasil'ev and L.~B. materikova.
\newblock {Excitation of Dielectric Bodies of Revolution}.
\newblock {\em Soviet Physics - Technical Physics}, 10(10):1401--1406, 1966.

\bibitem{vico2018cpde}
F.~Vico, L.~Greengard, and M.~Ferrando.
\newblock Decoupled field integral equations for electromagnetic scattering
  from homogeneous penetrable obstacles.
\newblock {\em Commun. Part. Diff. Eq.}, 43(2):159--184, 2018.

\bibitem{yarvin1998}
N.~Yarvin and V.~Rokhlin.
\newblock {Generalized Gaussian Quadratures and Singular Value Decompositions
  of Integral Operators}.
\newblock {\em SIAM J. Sci. Comput.}, 20(2):699--718, 1998.

\bibitem{YHM2012}
P.~Young, S.~Hao, and P.~G. Martinsson.
\newblock A high-order {N}ystr\"om discretization scheme for boundary integral
  equations defined on rotationally symmetric surfaces.
\newblock {\em J. Comput. Phys.}, 231(11):4142--4159, 2012.

\bibitem{young2010axi}
P.~M. Young and P.-G. Martinsson.
\newblock {A Direct Solver for the Rapid Solution of Boundary Integral
  Equations on Axisymmetric Surfaces in Three Dimensions}.
\newblock 2010.
\newblock arXiv:1002.2001 [math.NA].

\bibitem{Yu2008Closed}
W.~M. Yu, D.~G. Fang, and T.~J. Cui.
\newblock {Closed Form Modal Green's Functions for Accelerated Computation of
  Bodies of Revolution}.
\newblock {\em IEEE Trans. Antennas Propag.}, 56(11):3452--3461, 2008.

\end{thebibliography}

\end{document}